\documentclass[11pt,reqno]{amsart}

\usepackage{amsfonts}
\usepackage{amsopn}
\usepackage{amssymb}
\usepackage{graphicx}
\usepackage[margin=1in]{geometry}
\usepackage{epstopdf}
\usepackage{algorithmic}
\usepackage{ifpdf}
\usepackage{mathrsfs}
\usepackage{mathtools}
\usepackage{subcaption}
\usepackage{mathrsfs,euscript}
\usepackage{xcolor}
\usepackage{tikz}
\usetikzlibrary{arrows.meta}
\usepackage[hidelinks]{hyperref}
\usepackage[capitalize,nameinlink]{cleveref}

\ifpdf
  \DeclareGraphicsExtensions{.eps,.pdf,.png,.jpg}
\else
  \DeclareGraphicsExtensions{.eps}
\fi

\theoremstyle{plain}
\newtheorem{theorem}{Theorem}[section]
\newtheorem{lemma}[theorem]{Lemma}
\newtheorem{proposition}[theorem]{Proposition}

\theoremstyle{definition}

\crefname{hypothesis}{Hypothesis}{Hypotheses}

\numberwithin{equation}{section}

\title[Bulk-Surface PDE with Open Boundary]{Bulk-Surface Coupled PDE with an Open Boundary}
\author{Charles L. Epstein}
\address{Center for Computational Mathematics, Flatiron Institute of the Simons Foundation, 162 5th Ave., New York, NY 10010, USA}
\email{cepstein@flatironinstitute.org}
\author{Yoichiro Mori}
\address{Department of Mathematics, University of Pennsylvania, Philadelphia, PA 19104, USA}
\address{Department of Biology, University of Pennsylvania, Philadelphia, PA 19104, USA}
\email{y1mori@sas.upenn.edu}
\author{Han Zhou}
\address{Department of Mathematics, University of Pennsylvania, Philadelphia, PA 19104, USA}
\email{hzhou24@sas.upenn.edu}
\date{}
\keywords{Bulk-surface coupled PDE, Boundary integral equation, Wiener-Hopf decomposition, Finite element method}
\subjclass[2020]{35R35, 35C15, 35S05, 65R20}

\hypersetup{
  pdftitle={Bulk-Surface Coupled PDE with an Open Boundary},
  pdfauthor={Charles L. Epstein, Yoichiro Mori, and Han Zhou}
}

\newcommand{\paren}[1]{\left(#1\right)}
\newcommand{\jump}[1]{\left[#1\right]}

\newcommand{\p}{\partial}

\newcommand{\at}[2]{\left. #1 \right|_{#2}}
\newcommand{\grad}[1]{\nabla #1}

\newcommand{\mc}[1]{\mathcal{#1}}
\newcommand{\wh}[1]{\widehat{#1}}
\newcommand{\wt}[1]{\widetilde{#1}}
\newcommand{\bm}[1]{\boldsymbol{#1}}
\newcommand{\abs}[1]{\left\lvert #1 \right\rvert}
\newcommand{\norm}[1]{\left\lVert #1 \right\rVert}
\newcommand{\inprod}[2]{\left( #1,#2 \right)}
\newcommand{\dual}[2]{\left\langle #1,#2 \right\rangle}

\newcommand{\starnorm}[1]{\left| #1 \right|_*}

\newcommand{\vph}{\varphi}
\newcommand{\rev}[1]{#1}
\newcommand{\cle}[1]{#1}

\newcommand{\R}{\mathbb{R}}
\newcommand{\C}{\mathbb{C}}
\newcommand{\N}{\mathbb{N}}
\newcommand{\Z}{\mathbb{Z}}
\newcommand{\T}{\mathbb{T}}

\begin{document}

\begin{abstract}
  We study a bulk–surface coupled Laplace system involving an embedded open boundary. The problem is reformulated as an integro-differential equation using boundary integral representations, for which we establish existence and uniqueness of the solution. A Wiener–Hopf technique is employed to study the solution regularity and derive asymptotic expressions for the edge singularity. Building on these results, we develop a finite element method that incorporates the singularity structure, and we provide a rigorous error analysis. Numerical experiments confirm the theoretical convergence rates.
\end{abstract}

\maketitle

\section{Introduction}
Let $\Omega_1$ be a simply connected bounded domain with boundary $\Gamma=\partial\Omega_1$ and $S$ be an open portion of $\Gamma$ with two endpoints $\partial S = \{p_1,p_2\}$.
Let $\bm n:\Gamma\to\R^2$ be the unit normal that points from $\Omega_1$ to $\Omega_2$.
Set $\Omega_2 = \R^2\setminus\overline{\Omega_1}$.
Let $\bm \nu$ be the outer normal vector on $\partial S$ \cle{ as a subset of $\Gamma.$}
\rev{The domains \(\Omega_1\) and \(\Omega_2\) are auxiliary and serve to orient the two sides of the open boundary \(S\), thereby fixing the sign convention for the normal \(\bm n\); see Figure~\ref{fig:setup} for a schematic illustration.}
Given $f:S\to \R$ and $g:\partial S\to \R$, we consider the coupled bulk-surface system
\begin{subequations}\label{eqn:arc-pde}
\begin{align}
    - \Delta u &= 0,\quad \text{ in }\mathbb{R}^2\backslash \overline{S},\label{eqn:arc-pde-a}\\
    u & = U, \quad \text{ on }S,\label{eqn:arc-pde-b}\\
    - \Delta_\Gamma U + \jump{\partial_{\bm n} u }&=  f ,\quad \text{ on }S,\label{eqn:arc-pde-c}\\
    \bm \nu\cdot \grad_\Gamma U &= g, \quad \text{ on }\partial S,\label{eqn:arc-pde-d}\\
    u(x) &= \mc O(|x|^{-1}),\quad |x| \to \infty,\label{eqn:arc-pde-e}
\end{align}    
\end{subequations}
where $u:\R^2\to\R$ and $U:S\to \R$ are the unknown solutions.
Here, $\grad_\Gamma$ is the surface gradient on $\Gamma$ and $\Delta_\Gamma = \grad_\Gamma\cdot\grad_\Gamma$ is the Laplace-Beltrami operator on $\Gamma$.
The notation $\jump{\partial_{\bm n} u}$ is the jump of the normal derivative across $S$, defined as
\begin{equation}
    \jump{\partial_{\bm n} u} (x)= \lim_{\varepsilon\to 0+}\bm n(x)\cdot (\grad u(x - \varepsilon \bm n) - \grad u(x + \varepsilon \bm n)),\quad x\in \Gamma.
\end{equation}
The functions $f$ and $g$ are assumed to satisfy the compatibility condition  
\begin{equation}\label{eqn:compatible}
    \int_S f(x)\, ds_x + \sum_{p \in \partial S} g(p) = 0,
\end{equation}
which ensures consistency with the decay of the solution at infinity.

\begin{figure}[t]
\centering
\begin{tikzpicture}[scale=1.05, every node/.style={font=\small,text=black}]
    \coordinate (p1) at ({(1.85+0.33*cos(125))*cos(25)}, {(1.15+0.23*cos(125))*sin(25)});
    \coordinate (p2) at ({(1.85+0.33*cos(725))*cos(145)}, {(1.15+0.23*cos(725))*sin(145)});
    \coordinate (npt) at ({(1.85+0.33*cos(375))*cos(75)}, {(1.15+0.23*cos(375))*sin(75)});
    \fill[blue!5] plot[domain=0:360, samples=220, smooth] ({(1.85+0.33*cos(5*\x))*cos(\x)}, {(1.15+0.23*cos(5*\x))*sin(\x)}) -- cycle;
    \draw[thick, dashed, gray!75] plot[domain=0:360, samples=220, smooth] ({(1.85+0.33*cos(5*\x))*cos(\x)}, {(1.15+0.23*cos(5*\x))*sin(\x)}) -- cycle;
    \draw[blue, line width=2.4pt] plot[domain=25:145, samples=100, smooth] ({(1.85+0.33*cos(5*\x))*cos(\x)}, {(1.15+0.23*cos(5*\x))*sin(\x)});
    \fill[blue] (p1) circle (1.5pt) node[above right, text=black] {$p_1$};
    \fill[blue] (p2) circle (1.5pt) node[above left, text=black] {$p_2$};
    \node at (0,-0.15) {$\Omega_1$};
    \node at (3.0,0.2) {$\Omega_2$};
    \node at (-0.65,1.55) {$S\subset\Gamma$};
    \node at (-1.65,-1.05) {$\Gamma=\partial\Omega_1$};
    \draw[-{Latex[length=2mm]}, thick] (npt) -- ++(0.12,0.58) node[above, text=black] {$\bm n$};
    \draw[-{Latex[length=2mm]}, blue, thick] (p1) -- ++(0.53,-0.14) node[right, text=black] {$\bm\nu$};
    \draw[-{Latex[length=2mm]}, blue, thick] (p2) -- ++(-0.38,-0.40) node[below left, text=black] {$\bm\nu$};
\end{tikzpicture}
\caption{\rev{Geometry and notation. The open arc \(S\) is a proper open subset of \(\Gamma=\partial\Omega_1\); the remaining part \(\Gamma\setminus\overline S\) is shown as a dashed curve. The regions \(\Omega_1\) and \(\Omega_2\) determine the orientation of \(\bm n\), and \(\bm\nu\) denotes the outward normal to \(S\) at its endpoints.}}
\label{fig:setup}
\end{figure}


Bulk--surface coupled partial differential equations (PDEs) arise in a wide range of physical, biological, and chemical processes where dynamics in a bulk domain interact with processes occurring on its boundary or on an embedded surface. Typical examples include chemical reactions and pattern formation on cell membranes coupled to cytoplasmic diffusion~\cite{Madzvamuse2015,Madzvamuse2016,Paquin-Lefebvre2019,Chechkin2012}, as well as coupled diffusion in crystalline systems~\cite{Flynn2006,Kosolobov2022}. In such systems, quantities diffuse within both the bulk and the surface and are coupled through interfacial fluxes or dynamic boundary conditions that govern exchange between the two regions. The system~\eqref{eqn:arc-pde} considered in this work represents the time-independent formulation of such a coupled diffusion problem.

There has been a growing body of theoretical and computational research on bulk--surface coupled PDEs involving closed surfaces in recent years~\cite{Hausberg2018,Wu2024,Zhang2025,Colli2019,Li2021,Altmann2021,Elliott2013,Madzvamuse2016,Tang2025}. To the best of our knowledge, the present paper is the first to address both the well-posedness and numerical approximation of coupled bulk--surface PDEs with an open surface.
Compared to problems with closed surfaces, systems involving open surfaces pose additional challenges in both analysis and numerical approximation. The domain is typically non-Lipschitz, and the solution lacks standard boundary regularity. For instance, in screen-type problems, the solution often exhibits singularities at the edge of the open boundary. Such singularities prevent the solution from attaining higher regularity even when the data are smooth.

There has been a long and systematic line of research on PDEs posed on open surfaces, particularly on screen problems in wave scattering and crack problems in continuum mechanics. The earliest rigorous studies on existence and uniqueness for the Helmholtz equation with an open boundary are due to Hayashi~\cite{Hayashi1973,Hayashi1977a,Hayashi1977b}. The Wiener--Hopf decomposition technique for analyzing boundary regularity in pseudodifferential equations on bounded domains and manifolds was developed by Eskin~\cite{eskin1981boundary}, providing a systematic framework for deriving asymptotic expressions of edge singularities. This technique was further applied to screen and mixed boundary value problems by Stephan~\cite{Stephan1986,Stephan1987a,Stephan1987b}, and to crack problems by Duduchava et~al.~\cite{Duduchava1995}. Subsequently, Costabel et~al.~\cite{Costabel2003} conducted a more refined analysis of the singularity structure and demonstrated that the logarithmic term predicted by the general theory is a ``shadow term'' that should not appear. More recent studies have extended these results to problems involving multiple or fractal screens~\cite{Jerez-Hanckes2018,Jerez-Hanckes2020,Chandler-Wilde2017}.

The remainder of the paper is organized as follows. Section~\ref{sec:prep} introduces the notation and functional spaces used throughout. In Section~\ref{sec:exist}, we establish the existence and uniqueness of weak solutions. Section~\ref{sec:reg} analyzes the regularity and provides the asymptotic expansion of the edge singularity. In Section~\ref{sec:fem}, we develop a finite element method that incorporates the identified singularities and provide a corresponding convergence analysis. \rev{Section~\ref{sec:conclusion} briefly summarizes the \cle{main results of the} 
paper.}

\section{Notation and functional spaces}\label{sec:prep}
Throughout this work, we focus on dimensions $n=1, 2$. Following \cite{eskin1981boundary}, we define the Fourier transform and its inverse as follows
\begin{align}
     (\mathcal{F}\vph)(\xi) = \int_{\R^n} \vph(x) e^{-i x\cdot\xi}\,dx ,\quad
     (\mathcal{F}^{-1}\wh \vph)(x) = \frac{1}{(2\pi)^n} \int_{\R^n} \wh\vph(\xi) e^{i x\cdot\xi}\,d\xi.
\end{align}
For $s\in\R$, the Sobolev space $H^s(\R^n)$ is defined via Fourier transform
\begin{equation}
    H^s(\R^n) = \{u \in \mathscr S'(\R^n): \int_{\R^2} (1 + |\xi|^2)^s |\wh u(\xi)|^2 \,d\xi < +\infty \}.
\end{equation}

Let $\Omega \subset \R^n$ be a non-empty open subset. We \cle{let}
$p_\Omega$ denote the operation of restriction to $\Omega,$ and
$u\mapsto \wt{u},$ \cle{ a bounded } extension operator,
$H^s(\Omega)\to H^s(\R^n),$ with $\norm{\wt u}_{H^s(\R^n)} \le C
\norm{u}_{H^s(\Omega)}$. \cle{For such an extension operator to exist
  $\partial\Omega$ must satisfy some regularity condition. In what
  follows we assume that $\Omega$ is either a half space, or is
  bounded and $\partial\Omega$ is locally Lipschitz, which suffices. Under this
  assumption $p_{\Omega}[C^{\infty}_c(\R^n)]$ is dense in
  $H^s(\Omega).$} When $\Omega = \R_+$, we write $p_{\R_+}$ as $p_+$,
and use $u_+$ to denote the extension of $u$ by zero outside $\R_+$.

For an integer $k
\ge 0$ and $\alpha \in (0,1]$, the $C^{k,\alpha}(\Omega)$ norm is
  defined as
\begin{equation}
    \|u\|_{C^{k,\alpha}(\Omega)}
    \coloneqq 
    \sum_{|\beta| \le k} \|D^\beta u\|_{C^{0}(\Omega)} 
    + \sum_{|\beta| = k} [D^\beta u]_{C^{0,\alpha}(\Omega)}.
\end{equation}
where the $C^0$-norm and the H\"older seminorm are given by
\begin{equation}
    \|f\|_{C^0(\Omega)} \coloneqq \sup_{x \in \Omega} |f(x)|, 
    \quad
    [f]_{C^{0,\alpha}(\Omega)} 
    \coloneqq 
    \sup_{\substack{x,y \in \Omega,  x \neq y}}
    \frac{|f(x) - f(y)|}{|x - y|^{\alpha}}.
\end{equation}
The H\"older space $C^{k,\alpha}(\Omega)$ is defined by  
\begin{equation}
    C^{k,\alpha}(\Omega)
    \coloneqq 
    \left\{
        u \in C^{k}(\Omega) \;:\;
        [D^\beta u]_{C^{0,\alpha}(\Omega)} < \infty 
        \text{ for all multi-indices } \beta \text{ with } |\beta| = k
    \right\}.
\end{equation}

We also define the following Sobolev spaces
\begin{align}
    H^s_{\overline{\Omega}}(\R^n) = \{u\in H^s(\R^n), \text{supp } u\subset \overline{\Omega}\}, \quad
    H^s(\Omega) = \{ u|_{\Omega} : u \in H^s(\R^n) \}.
\end{align}
\cle{If $\Omega$ is Lipschitz, then the space
  $H^s_{\overline{\Omega}}(\R^n)$ is the closure of
  $\C^{\infty}_c(\Omega)$ in the $H^s$-norm, see~\cite[Theorem 3.29,
    3.30]{McLean2000}.} The space $H^s(\Omega)$ is identified with the
  quotient $H^s(\Omega) = H^s(\R^n) / H^s_{\overline{\R^n \setminus
      \Omega}}(\R^n)$, with the norm $\norm{u}_{H^s(\Omega)} = \inf_{v
    \in H^s(\R^n), v|_{\Omega} = u} \norm{v}_{H^s(\R^n)}$.  The spaces
  $H^{-s}_{\overline{\Omega}}(\R^n)$ and $H^s(\Omega)$ are canonically
  dual with respect to the duality pairing \cle{defined} by \cle{extending the $L^2$} inner-product
  $\inprod{\cdot}{\cdot}_{L^2(\R^n)} : C_c^{\infty}(\Omega) \times
  C_c^{\infty}(\Omega) \to \C.$ \cle{This is possible because the
    Cauchy Schwarz inequality implies that for any $s\in\R$ we have
    the estimate
\begin{equation}\label{CS:gen}
  |(u,v)_{L^2}|\leq\|u\|_{H^{s}}\|v\|_{H^{-s}},
\end{equation}
 and $C^{\infty}_c(\Omega)\subset H^s_{\overline{\Omega}}(\R^n)$ is
 dense for any $s.$ If $v_1=v_2$ in $\Omega,$ then for $\varphi\in
 C^{\infty}_c(\Omega)$ $(\varphi,v_1)=(\varphi,v_2).$ Using these
 observations and the estimate in~\eqref{CS:gen} we easily establish
 the claim that the extension of the $L^2$-pairing defines an
 isomorphism:
 \begin{equation}
   H^{-s}_{\overline{\Omega}}(\R^n)\simeq [H^s(\Omega)]'.
 \end{equation}
 We denote this extended pairing by $\inprod{u}{v}_{L^2(\Omega)}:H^s(\Omega)\times H^{-s}_{\overline{\Omega}}(\R^n),$ for all $s.$
}

Functions in $H^s_{\overline{\Omega}}(\mathbb{R}^n)$ are defined on
$\mathbb{R}^n$ but have compact support contained in
$\overline{\Omega}$. Therefore, for any larger domain $O \supset
\Omega$, the space $H^s_{\overline{\Omega}}(\mathbb{R}^n)$ is
equivalent to $H^s_{\overline{\Omega}}(O)$. For any $s,$ the operation
of extension by zero is a bounded map from
$H^s_{\overline{\Omega}}(\R^n)\to H^s(\R^n).$
\begin{lemma}
    \label{lem:hso-norm}
    For any non-empty open sets $\Omega,O\subset\R^n$ where $\overline{\Omega}\subset O$, it follows that $H^s_{\overline{\Omega}}(\R^n)$ is equal to $H^s_{\overline{\Omega}}(O)$.
\end{lemma}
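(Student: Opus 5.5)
The statement needs to be read correctly before it can be proved. Here $H^s_{\overline{\Omega}}(O)$ means the distributions on $O$ that lie in $H^s(O)$, with the quotient norm $\norm{u}_{H^s(O)}=\inf\{\norm{v}_{H^s(\R^n)}: v|_O=u\}$, and whose support is contained in $\overline{\Omega}$. Since $\overline\Omega\subset O$, the support of such a $u$ is a closed subset of $\R^n$ that stays away from $\R^n\setminus O$ (this is automatic when $\Omega$ is bounded, which is the case of interest). I will prove two things: that extension by zero, $u\mapsto u_0$, is a bijection from $H^s_{\overline{\Omega}}(O)$ onto $H^s_{\overline{\Omega}}(\R^n)$ with inverse $p_O$, and that the two norms are equivalent,
\begin{equation*}
\norm{u}_{H^s(O)}\le \norm{u_0}_{H^s(\R^n)}\le C\norm{u}_{H^s(O)} .
\end{equation*}

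The plan is a cutoff argument.

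\textbf{Cutoff.} Choose $\chi\in C^\infty(\R^n)$ with all derivatives bounded, $\chi\equiv1$ on a neighbourhood of $\overline\Omega$, and $\operatorname{supp}\chi\subset O$. For bounded $\Omega$ such a $\chi$ is a standard mollified indicator of a $\delta$-neighbourhood of $\overline\Omega$, where $\delta<\tfrac12\operatorname{dist}(\overline\Omega,\R^n\setminus O)$. Multiplication by $\chi$ is bounded on $H^s(\R^n)$ for every real $s$. For integer $s\ge0$ this follows from the Leibniz rule. For general $s$ one uses duality and interpolation, or directly the Peetre inequality $(1+|\xi|^2)^{s/2}\le 2^{|s|/2}(1+|\xi-\eta|^2)^{|s|/2}(1+|\eta|^2)^{s/2}$ applied to $\wh{\chi v}=(2\pi)^{-n}\wh\chi*\wh v$, using that $\wh\chi$ is rapidly decreasing.

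\textbf{Key steps.}
\begin{enumerate}
\item \emph{The easy inclusion.} Take $w\in H^s_{\overline\Omega}(\R^n)$. Then $u=w|_O$ lies in $H^s(O)$, its support is contained in $\overline\Omega$, and $\norm{u}_{H^s(O)}\le\norm{w}_{H^s(\R^n)}$ by the definition of the quotient norm. Moreover $w$ is the zero extension of $u$: both are distributions on $\R^n$ that agree on $O$ and vanish on $\R^n\setminus\overline\Omega$, and these two open sets cover $\R^n$ because $\overline\Omega\subset O$. So $w=u_0$.
\item \emph{The reverse inclusion.} Take $u\in H^s_{\overline\Omega}(O)$ and any extension $v\in H^s(\R^n)$ with $v|_O=u$. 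I claim $\chi v=u_0$. On $O$ we have $\chi v=\chi u=u$, because $\chi=1$ near $\operatorname{supp}u$. Off $\operatorname{supp}\chi$, which is a neighbourhood of $\R^n\setminus O$, we have $\chi v=0$. These open sets cover $\R^n$, and $u_0$ takes the same values on each, so $\chi v=u_0$. Hence $u_0\in H^s(\R^n)$ with support in $\overline\Omega$, and
\begin{equation*}
\norm{u_0}_{H^s(\R^n)}=\norm{\chi v}_{H^s(\R^n)}\le C_\chi\norm{v}_{H^s(\R^n)} .
\end{equation*}
Taking the infimum over all extensions $v$ gives the norm equivalence.
\end{enumerate}

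\textbf{Main obstacle.} The only delicate step is the multiplier bound for negative or non-integer $s$. I would handle it with the Fourier/Peetre estimate in the Cutoff paragraph. Equivalently, one can treat $s\ge0$ directly and obtain $s<0$ by duality, since multiplication by $\chi$ is self-adjoint for the $L^2$ pairing.
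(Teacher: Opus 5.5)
Your argument is correct, and it differs from the paper's mainly in where the cutoff is applied.

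\textbf{The paper's route.} The paper works on the dense class $C_c^\infty(\Omega)$. For nonnegative integers $k$ it bounds $\norm{f}_{H^k(\R^n)}=\norm{\vph f}_{H^k(\R^n)}$ by the derivatives of $f$ on $O$ using the Leibniz rule. It then concludes by ``the density argument''.

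\textbf{Your route.} You apply $\chi$ to an arbitrary extension $v$ of $u$, observe that $\chi v=u_0$, and take the infimum over $v$. This handles the quotient norm on $H^s(O)$ directly. The only analytic input is that multiplication by $\chi$ is bounded on $H^s(\R^n)$.

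\textbf{What your version gains.}
\begin{itemize}
\item It covers every real $s$ at once. This includes the negative and fractional orders, such as $s=-\tfrac12$ and $s=-1$, that the paper actually uses later. The paper's computation treats only nonnegative integers and leaves the rest to unstated interpolation and duality.
\item It does not need density of $C_c^\infty(\Omega)$ in $H^s_{\overline\Omega}(\R^n)$. That density requires some boundary regularity of $\Omega$, for example Lipschitz.
\item It identifies the spaces themselves, showing that extension by zero and $p_O$ are mutually inverse. The paper only compares norms on a dense subset.
\end{itemize}

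\textbf{What the paper's version offers.} It is more elementary, since it uses only the Leibniz rule. However, for $f\in C_c^\infty(\Omega)$ and integer $k\ge 0$ the cutoff is not needed at all. The derivative norms of $f$ over $\R^n$ and over $O$ already coincide, because $f$ is supported in $\Omega$. So the cutoff really matters only in the negative-order case, which your argument treats.

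\textbf{A shared assumption.} Both proofs need $\overline\Omega$ to lie at positive distance from $\R^n\setminus O$. The paper assumes this implicitly by taking $\vph\in C_c^\infty(O)$ with $\vph=1$ on $\Omega$, which forces $\Omega$ to be bounded. You were right to state this assumption explicitly.
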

\begin{proof}
See Appendix~\ref{pf-hso-norm}. 
\end{proof}

We also introduce functional spaces on the boundary $\Gamma=\partial\Omega$.
For $s \in \mathbb N$, we define the Sobolev space on $\Gamma$ as
\begin{equation}
    H^s(\Gamma) = \{u \in L^2(\Gamma): X^\alpha u \in L^2(\Gamma) \text{ for } |\alpha| \le s \},
\end{equation}
where $X^\alpha$ is the \cle{unit length} vector field tangent to $\Gamma$.
For $s \in \R_+\setminus\mathbb N^*$, the fractional Sobolev spaces are defined by interpolation\rev{ \cite[Appendix~B]{McLean2000}}, $H^{k+\theta}(\Gamma)=[H^{k}(\Gamma),H^{k+1}(\Gamma)]_{\theta}$ with $s = k+\theta$ and $k\in\mathbb N$. They can also be characterized with the trace operator
\begin{equation}
    H^s(\Gamma) =\{ u|_{\Gamma} : u \in H^{s+\frac{1}{2}}(\R^2) \}, \quad s > 0.
\end{equation}
The negative Sobolev space for $s < 0$ is defined via duality
\begin{equation}
        H^s(\Gamma) = \left(H^{-s}(\Gamma)\right)', \quad s < 0.
\end{equation}
Similarly, for the open portion $S \subset \Gamma$, we define the Sobolev spaces
\begin{align}
    H^s_{\overline{S}}(\Gamma) = \{ u \in H^s(\Gamma) : \operatorname{supp}\, u \subset \overline{S} \} , \quad
    H^s(S) = \{ u|_{S} : u \in H^s(\Gamma) \} .
\end{align}
On the boundary $\partial S$, which consists of a finite number of points, we define the inner-product and norm as
\begin{equation}
    \inprod{f}{g}_{\partial S} = \sum_{p\in\partial S}f(p) \overline{g(p)}, \quad \norm{f}_{\partial S} = \sqrt{\inprod{f}{f}_{\partial S}}.
\end{equation}

We denote by $\mathbb H = H^1(S) \times H^{-\frac{1}{2}}_{\overline{S}}(\Gamma).$ For any $\bm u = (u_1,u_2),\bm v = (v_1,v_2)\in \mathbb H$, the inner-product and induced norm are naturally given by
\begin{equation}
    \inprod{\bm u}{\bm v}_{\mathbb H} = \inprod{u_1}{v_1}_{H^1(S)} + \inprod{u_2}{v_2}_{H^{-\frac{1}{2}}_{\overline{S}}(\Gamma)}, \quad\norm{\bm u }_{\mathbb H} = \sqrt{\inprod{\bm u}{\bm u}_{\mathbb H}}.
\end{equation}


For any integer $k\ge 1$ and $\alpha\in(0,1]$, let $\bm{X} \in C^{k,\alpha}(I)$ where $I=(-1,1)$ be a \emph{regular} parameterization of $S$. 
Here, we say the parametrization $\bm X$ is regular if $|\partial_s \bm{X}(s)| > 0$ for all $s \in I$. 
We assume that $S$ is not self-intersecting and satisfies the arc–chord condition 
\begin{equation}
    \starnorm{\bm{X}} \coloneqq \inf_{\substack{s,\eta \in I , s \neq \eta}} 
    \frac{|\bm{X}(s) - \bm{X}(\eta)|}{|s - \eta|} > 0.
\end{equation}
Furthermore, we assume that $\bm{X}$ admits an extension 
$\wt{\bm{X}}\in C^{k,\alpha}(\T)$, which parametrizes the closed curve $\Gamma$ and satisfies $|\partial_s \wt{\bm{X}}| > 0$ and $|\wt{\bm{X}}|_* > 0$ where $\mathbb{T} = \mathbb{R} / (2\pi \mathbb{Z}) $ is the one-dimensional circle.
The following norm equivalence result will be useful. 
\begin{lemma}\label{lem:norm-equv-1}
    Let $f\in H^s(\Gamma)$ for $s\in \R$ and $\bm{X}\in C^{k,\alpha}(\T)$ with $|s|\le k$ be a regular parameterization of $\Gamma$. We have
    \begin{align}
        c\norm{f \circ \bm{X}}_{H^s(\mathbb T)} \le \norm{f}_{H^s(\Gamma)}  \le  C\norm{f \circ \bm{X}}_{H^s(\mathbb T)} ,
    \end{align}
    where the constants $c,C>0$ are independent of $f$.
\end{lemma}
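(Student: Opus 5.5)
The plan is to prove the equivalence first for nonnegative integers $s=m\le k$, then extend to fractional $0<s\le k$ by interpolation, and finally to negative $s$ by duality.

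\emph{Integer case.} Throughout, write $F=f\circ\bm X$ and use the Jacobian $J(t)=|\partial_t\bm X(t)|$. By regularity and compactness of $\T$ we have $0<c_0\le J\le C_0$, and $J\in C^{k-1,\alpha}(\T)$. The unit tangent field acts by
\begin{equation*}
(Xf)\circ\bm X=J^{-1}\partial_t F.
\end{equation*}
Iterating this $m\le k$ times expresses $(X^m f)\circ\bm X$ as $J^{-m}\partial_t^m F$ plus a combination of lower derivatives $\partial_t^j F$, $j<m$. The coefficients of these lower terms are polynomials in $J^{-1}$ and in derivatives of $J$ of order at most $m-1\le k-1$, so they are bounded.

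Changing variables in the $L^2(\Gamma)$ integral, with $ds=J\,dt$, then gives
\begin{equation*}
\norm{f}_{H^m(\Gamma)}\le C\norm{F}_{H^m(\T)}.
\end{equation*}
For the reverse bound, we invert the triangular relation: $\partial_t^m F=J^m (X^m f)\circ\bm X+{}$lower-order terms, whose coefficients again involve only derivatives of $J$ up to order $m-1$. Induction on $m$ then yields $\norm{F}_{H^m(\T)}\le C\norm{f}_{H^m(\Gamma)}$. The case $m=0$ is immediate from the bounds on $J$.

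\emph{Fractional case.} For $0<s\le k$ non-integer, write $s=j+\theta$. The pullback $f\mapsto f\circ\bm X$ is an isomorphism $H^j(\Gamma)\to H^j(\T)$ and $H^{j+1}(\Gamma)\to H^{j+1}(\T)$, with inverse $F\mapsto F\circ\bm X^{-1}$; here $j+1\le k$ because $s\le k$. The spaces $H^{j+\theta}(\Gamma)$ are defined by complex interpolation, and $H^{j+\theta}(\T)=[H^j(\T),H^{j+1}(\T)]_\theta$ by the standard Fourier-series argument. Interpolating both the pullback and its inverse therefore gives the two-sided bound for $H^s$.

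\emph{Negative case.} For $s<0$, the norm on $H^s(\Gamma)$ is the dual norm with respect to the $L^2(\Gamma)$ pairing. Changing variables, $\int_\Gamma f\overline g\,ds=\int_\T F\,\overline{G}\,J\,dt$ with $G=g\circ\bm X$. The supremum over $g$ with $\norm{g}_{H^{-s}(\Gamma)}\le1$ corresponds, by the positive case, to a supremum over $G$ in a ball of $H^{-s}(\T)$ of comparable size. Multiplication by $J$ is an isomorphism of $H^{-s}(\T)$ provided $|s|\le k-1$, or $|s|<k-1+\alpha$ for fractional $s$, using the Hölder regularity of $J$. Hence the supremum is comparable to the dual norm of $F$ in $H^s(\T)$.

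\textbf{Main obstacle.} The expected difficulty is the borderline negative case $|s|=k$. There multiplication by $J\in C^{k-1,\alpha}$ need not preserve $H^{k}(\T)$. I would first avoid this by absorbing $J$ into the test function, writing $\overline G J=\overline{(g J)\circ\bm X}$. This moves the issue to showing that $g\mapsto gJ$ is bounded on $H^{k}(\Gamma)$, which is the same issue, so it does not remove it. Failing that, I would rely on the arc-length structure: the pairing on $\Gamma$ is measured with $ds$, and the vector field $X$ is skew-adjoint, $X^*=-X$, in $L^2(\Gamma,ds)$. So I would define the negative norms intrinsically on $\Gamma$ and compare them with $\T$ only through the pullback operators already controlled in the positive case. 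If this still fails at the endpoint, I would note that the lemma is only needed for $|s|\le 1$ with $k\ge1$ (where $J\in C^{0,\alpha}$ acts boundedly on $H^{s'}$ for $|s'|<\alpha$), and handle the general endpoint by an additional smoothness assumption.
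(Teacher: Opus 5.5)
Your treatment of $s\ge 0$ is the paper's argument. It uses the chain rule for $(J^{-1}\partial_t)^m$ (the paper writes this with Bell polynomials), inverts the triangular relation by induction, and interpolates between consecutive integers. For $s<0$ the paper only says ``by interpolation and duality''. Your change-of-variables computation is the real content of that step, and it is correct: because the pairing on $\Gamma$ carries $ds=J\,dt$, the positive case gives
\begin{equation*}
\norm{f}_{H^{-\sigma}(\Gamma)}\simeq\norm{J\,(f\circ\bm X)}_{H^{-\sigma}(\T)},\qquad 0<\sigma\le k.
\end{equation*}
So the claim reduces to multiplication by $J^{\pm1}$ being bounded on $H^{\sigma}(\T)$.

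\textbf{The gap.} That multiplier bound holds only for $\sigma<k-1+\alpha$ (or for all $\sigma\le k$ if $\alpha=1$). By your own criterion, the uncovered range is therefore $k-1+\alpha\le\sigma\le k$, not just the endpoint $\sigma=k$. None of your fallbacks closes it. Absorbing $J$ into the test function, or using $X^*=-X$, leaves $J$ in the comparison, and extra smoothness changes the hypothesis.

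In fact the gap cannot be closed, because the statement fails in that range. Take $k=1$, $0<\alpha<1$, and
\begin{equation*}
J(t)=1+c\sum_{j\ge1}2^{-j\alpha}\cos(2^jt)
\end{equation*}
with $c>0$ small. Then $J\in C^{0,\alpha}(\T)$, $J\ge\frac12$, $J$ has mean $1$, and $J\in H^{\sigma}(\T)$ only for $\sigma<\alpha$. Setting $\phi(t)=\int_0^tJ$ and $\bm X=(\cos\phi,\sin\phi)$ gives a regular $C^{1,\alpha}$ parametrization of the unit circle with $|\bm X'|=J$.

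Suppose $\norm{f}_{H^{-\sigma}(\Gamma)}\le C\norm{f\circ\bm X}_{H^{-\sigma}(\T)}$ held for all $f\in L^2(\Gamma)$. The display would then give $\norm{JF}_{H^{-\sigma}(\T)}\le C\norm{F}_{H^{-\sigma}(\T)}$ for all $F\in L^2(\T)$. Hence $|(J,F)_{L^2}|=|(1,JF)_{L^2}|\le C\norm{F}_{H^{-\sigma}(\T)}$, which would force $J\in H^{\sigma}(\T)$. That is false for every $\sigma\in[\alpha,1]$.

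So with $\alpha=\delta\le\frac12$, as in the $C^{1,\delta}$ setting of the existence theory, even $s=-\frac12$ fails. Your remark that only $|s|\le 1$ is needed therefore does not help. The paper's one-line duality step hides the same defect.

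There are two honest repairs:
\begin{itemize}
\item restrict the negative range to $|s|<k-1+\alpha$; or
\item pull back negative-order objects as densities, $f\mapsto J\,(f\circ\bm X)$. Then the display gives the two-sided bound for all $0<\sigma\le k$ directly from the positive case. This is also consistent with the appendix, which writes $V_\Gamma\psi$ on $\T$ with $dt$ rather than $J\,dt$.
\end{itemize}
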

\begin{proof}
See Appendix~\ref{sec:pf-lem-1}.
\end{proof}
The above result immediately leads to the following norm equivalence for functions in  $H^s(S)$ and $H_{\overline{S}}^s(\Gamma)$.
\begin{lemma}\label{lem:norm-equv-2}
    Let $f\in H^s(S)$ and $g\in H^s_{\overline{S}}(\Gamma)$ for $s\in \R$ and $\bm{X}\in C^{k,\alpha}(I)$ with $|s|\le k$ be a regular parameterization of $S$. We have
    \begin{align}
        c\norm{f \circ \bm{X}}_{H^s(I)} \le \norm{f}_{H^s(S)} \le C\norm{f \circ \bm{X}}_{H^s(I)},\\
        c \norm{g \circ \bm{X}}_{H^s_{\overline{I}}(\mathbb T)} \le \norm{g}_{H^s_{\overline{S}}(\Gamma)} \le C \norm{g \circ \bm{X}}_{H^s_{\overline{I}}(\mathbb T)} ,
    \end{align}
    where the constants $c,C>0$ are independent of $f$ and $g$.
\end{lemma}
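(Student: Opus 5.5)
The plan is to deduce both equivalences from Lemma~\ref{lem:norm-equv-1}, using the extension $\wt{\bm X}\in C^{k,\alpha}(\T)$ of $\bm X$. After an affine reparametrization we may regard $I=(-1,1)$ as a subinterval of $\T$ with $\wt{\bm X}|_I=\bm X$. The pullback $\Phi: f\mapsto f\circ\wt{\bm X}$ is then an isomorphism $H^s(\Gamma)\to H^s(\T)$ with norm bounds $c,C$ by Lemma~\ref{lem:norm-equv-1}. The whole argument consists of checking that $\Phi$ respects the two operations that define the spaces on $S$: support restriction and restriction modulo functions vanishing on $S$.

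For the second inequality (the spaces $H^s_{\overline S}(\Gamma)$) I will first show that $\operatorname{supp} g\subset\overline S$ if and only if $\operatorname{supp}(g\circ\wt{\bm X})\subset\overline I$. For $s\ge0$ this is immediate because $\wt{\bm X}$ is a homeomorphism of $\T$ onto $\Gamma$ mapping $\overline I$ onto $\overline S$. For $s<0$ the pullback of distributions is defined by duality, pairing against $\varphi\circ\wt{\bm X}^{-1}$ multiplied by the Jacobian $|\partial_s\wt{\bm X}|$. Test functions supported away from $\overline I$ correspond exactly to test functions supported away from $\overline S$, so the support statement transfers. Hence $\Phi$ maps $H^s_{\overline S}(\Gamma)$ bijectively onto $H^s_{\overline I}(\T)$. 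These spaces carry the restricted norms, so the bounds of Lemma~\ref{lem:norm-equv-1} apply verbatim.

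For the first inequality I will use the quotient norm
\[
\norm{f}_{H^s(S)}=\inf\{\norm{F}_{H^s(\Gamma)}: F\in H^s(\Gamma),\ F|_S=f\},
\]
and similarly for $H^s(I)$. The point is that I read $H^s(I)$ as restrictions from $H^s(\T)$; restrictions from $\R$ give an equivalent norm, via a cutoff supported in a neighbourhood of $\overline I$ that is smaller than the period. Since $\Phi$ is a bijection and $(F\circ\wt{\bm X})|_I=f\circ\bm X$ exactly when $F|_S=f$, the extensions of $f$ correspond bijectively to the extensions of $f\circ\bm X$. Taking infima in
\[
c\norm{F\circ\wt{\bm X}}_{H^s(\T)}\le\norm{F}_{H^s(\Gamma)}\le C\norm{F\circ\wt{\bm X}}_{H^s(\T)}
\]
then gives the claim with the same constants.

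The main obstacle I expect is the negative-index case, specifically making sure that the duality-defined pullback on $\Gamma$ agrees with the notion of support and restriction used in the definitions. I would handle it by density. Both statements hold for smooth functions, and Lemma~\ref{lem:norm-equv-1} together with the density of smooth functions in $H^s_{\overline S}(\Gamma)$ (closure of $C^\infty_c(S)$, cf.\ the remark after the definition of $H^s_{\overline\Omega}$) lets me pass to limits. A secondary point is the regularity condition $|s|\le k$, which is needed so that multiplication by the $C^{k-1,\alpha}$ Jacobian preserves $H^s$. This is already built into Lemma~\ref{lem:norm-equv-1}, so I do not need to reprove it.
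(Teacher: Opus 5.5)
Your proposal is correct and follows the paper's route. The paper only remarks that this lemma follows immediately from Lemma~\ref{lem:norm-equv-1}, and your argument supplies exactly the omitted details: pullback by $\wt{\bm X}$ preserves supports and matches extensions, so the infima defining the quotient norms correspond, and restricting from $\T$ versus $\R$ gives equivalent norms.

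Two harmless slips. First, with the arc-length pairing on $\Gamma$, the test function $\varphi\circ\wt{\bm X}^{-1}$ should be divided, not multiplied, by $|\partial_s\wt{\bm X}|\circ\wt{\bm X}^{-1}$. Second, the upper bounds in the quotient step need the pullback to map $H^s(\Gamma)$ onto $H^s(\T)$; this comes from the proof of Lemma~\ref{lem:norm-equv-1}, not from its literal statement.
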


\section{Existence and uniqueness}\label{sec:exist}
To establish the well-posedness of~\eqref{eqn:arc-pde}, we begin by reformulating the problem as an integro-differential equation on $S$ via boundary integral operators.
\rev{We represent the solution $u$ as a single-layer potential, see, for example, \cite[Chapters~6--7]{McLean2000} and \cite[Chapter~2]{Hsiao2021},}
\begin{equation}\label{eqn:single-layer-rep}
    u(x) = V_\Omega \psi (x),\quad x \in \R^2, 
\end{equation}
where the single-layer potential $V_\Omega \psi:H^{-\frac{1}{2}}(\Gamma)\to H^1_{loc}(\R^2)$ is defined as
\begin{equation}\label{eqn:single-layer-potential}
    V_\Omega \psi (x) = -\frac{1}{2\pi}\int_\Gamma \log|x - y| \psi(y) \,ds_y,\quad x \in \R^2.
\end{equation}
\cle{This function is harmonic in $\R^2\setminus\Gamma.$} \cle{ If
$\text{supp }\psi\subset \overline{S},$ then $\psi =
\jump{\partial_{\bm n} u} \in H^{-\frac{1}{2}}_{\overline{S}}(\Gamma),$}
\rev{see~\cite[Theorem~6.11]{McLean2000}}.
Let $V_\Gamma \psi$ be the trace of $V_{\Omega}\psi$ on $\Gamma$ and
$V_S\psi = p_S V_\Gamma\psi$ be the restriction on $S$.  We now
consider a slightly more general integro-differential system given by
\begin{subequations}\label{eqn:arc-bie}
    \begin{align}
        -\Delta_\Gamma U + \psi &= f,\quad \text{ on } S, \label{eqn:arc-bie-b}\\
        V_S \psi - U &= h, \quad \text{ on } S, \\
        \bm \nu\cdot\grad_\Gamma U &= g,\quad \text{ on } \partial S.
    \end{align}
\end{subequations}
The case when $h=0$ corresponds to the problem~\eqref{eqn:arc-pde}.



We first state a result concerning the uniqueness of the solution to problem~\eqref{eqn:arc-pde}.
\begin{proposition}\label{unique-pde}
    The pair of functions $ (u, U) \in
    H^1_{\mathrm{loc}}(\mathbb{R}^2) \times H^1(S) $
    solves~\eqref{eqn:arc-pde} \cle{with $f=g=0$} if and only if $ u
    \equiv 0 $ and $ U \equiv 0 $.
\end{proposition}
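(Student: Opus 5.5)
The plan is an energy argument: multiply the bulk equation by $u$ and the surface equation by $U$, integrate by parts, and show that the total Dirichlet energy vanishes. The ``if'' direction is trivial, so assume $(u,U)$ solves \eqref{eqn:arc-pde} with $f=g=0$. Fix a large radius $R$ and let $B_R$ be the disc of radius $R$ containing $\overline{\Omega_1}$. Since $u\in H^1_{\mathrm{loc}}$ is harmonic off $\overline S$, Green's formula applies on $B_R\setminus\overline S$, treating $S$ as a two-sided cut. This can be done rigorously by applying Green's identity separately on $\Omega_1$ and on $B_R\cap\Omega_2$; both are Lipschitz, the traces of $u$ from the two sides agree on all of $\Gamma$, and the normal derivatives also agree on $\Gamma\setminus\overline S$. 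The result is
\begin{equation*}
  \int_{B_R}|\nabla u|^2\,dx = \dual{\jump{\partial_{\bm n}u}}{u|_\Gamma}_{\Gamma} + \int_{\partial B_R} u\,\partial_r u\,ds .
\end{equation*}
Here $\jump{\partial_{\bm n}u}\in H^{-\frac12}_{\overline S}(\Gamma)$ is supported in $\overline S$, and $u|_\Gamma\in H^{1/2}(\Gamma)$, so the pairing makes sense and equals $\dual{\jump{\partial_{\bm n}u}}{U}_S$ because $u=U$ on $S$.

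Next, test the surface equation \eqref{eqn:arc-pde-c} against $U\in H^1(S)$ in weak form. With $g=0$, the natural boundary condition \eqref{eqn:arc-pde-d} gives
\begin{equation*}
  \int_S |\nabla_\Gamma U|^2\,ds + \dual{\jump{\partial_{\bm n}u}}{U}_S = 0 .
\end{equation*}
This uses the duality $[H^{1/2}(S)]'\simeq H^{-1/2}_{\overline S}(\Gamma)$ set up in Section~\ref{sec:prep}; the weak formulation of \eqref{eqn:arc-pde-c}--\eqref{eqn:arc-pde-d} is exactly this identity for all test functions in $H^1(S)$. Subtracting the two identities gives
\begin{equation*}
  \int_{B_R}|\nabla u|^2\,dx + \int_S|\nabla_\Gamma U|^2\,ds = \int_{\partial B_R}u\,\partial_r u\,ds .
\end{equation*}

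The main obstacle is controlling the boundary term at infinity. Outside $B_{R_0}\supset\overline{\Omega_1}$, $u$ is harmonic and satisfies $u=\mc O(|x|^{-1})$. A harmonic function in an exterior domain in $\R^2$ that is bounded has an expansion $u = c_0 + \sum_{k\ge1} r^{-k}(a_k\cos k\theta + b_k\sin k\theta)$, and the decay condition forces $c_0=0$. It follows that $\partial_r u=\mc O(r^{-2})$, so the boundary term is $\mc O(R\cdot R^{-1}\cdot R^{-2})\to0$ as $R\to\infty$. If there is doubt about how \eqref{eqn:arc-pde-e} is to be interpreted, the Laurent/Fourier expansion on annuli justifies both the decay and the derivative bound.

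Letting $R\to\infty$ gives $\nabla u=0$ on $\R^2\setminus\overline S$ and $\nabla_\Gamma U=0$ on $S$. Since $\R^2\setminus\overline S$ is connected (an arc does not separate the plane), $u$ is constant there, and the decay condition forces this constant to be $0$. As $u\in H^1_{\mathrm{loc}}(\R^2)$, we get $u\equiv0$. Finally $U=u|_S=0$ by \eqref{eqn:arc-pde-b}. As a consistency check, $\jump{\partial_{\bm n}u}=0$ then follows automatically.
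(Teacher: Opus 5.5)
Your proposal is correct and follows essentially the same route as the paper: Green's identity on $\Omega_1$ and on $B_R\setminus\overline{\Omega_1}$ gives the energy identity with $\jump{\partial_{\bm n}u}\in H^{-\frac12}_{\overline S}(\Gamma)$ paired against $U$, the weak form of the surface equation with $g=0$ replaces that pairing by $-\int_S|\nabla_\Gamma U|^2\,ds$, and the decay $|\nabla u|=\mc O(r^{-2})$ makes the term on $\partial B_R$ vanish. The differences are cosmetic: you justify the gradient decay with the exterior Fourier expansion where the paper cites McLean's Theorem 8.8, and you get $U=0$ from $u|_S=U$ where the paper argues from constancy plus decay.
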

\begin{proof}
Let $B_r$ be a ball of radius $r$ centered at the origin, with $r$ large enough such that $\Gamma \subset B_r$. Since $u \in H^1_{\mathrm{loc}}(\mathbb{R}^2)$, its traces satisfy $\at{u}{\Gamma}\in H^{\frac12}(\Gamma) $ and $\at{u}{\p B_r}\in H^{\frac12}(\p B_r) $. Let $u_1 = p_{\Omega_1} u$ and $u_2 = p_{B_r\setminus\overline{\Omega_1}} u$. By \cite[Lemma 7.1]{Epstein2010Debye} and \cite{taylor1996pde1}, the normal derivatives satisfy $\at{\p_{\bm n}u_i}{\Gamma}\in H^{-\frac12}(\Gamma)$ and $\at{u_2}{\p B_r}\in H^{\frac12}(\p B_r)$.
The jump in the normal derivative across $\Gamma$ is given by $\jump{\p_{\bm n}u} = \at{\p_{\bm n}u_1}{\Gamma} - \at{\p_{\bm n}u_2}{\Gamma} \in H^{-\frac12}(\Gamma)$. 
Since $u$ satisfies the Laplace equation~\eqref{eqn:arc-pde-a} in $\R^2\setminus\overline{S}$, it is smooth away from $\overline{S}$, which implies $\jump{u}= 0$ and $\jump{\p_{\bm n}u} = 0$ on $\Gamma\setminus\overline{S}$.
Thus, $\jump{\p_{\bm n}u} \in H^{-\frac12}_{\overline{S}}(\Gamma)$.
Multiplying both sides of~\eqref{eqn:arc-pde-a} by $u$, integrating over $B_r$, and applying the integration by parts formula, we obtain
\begin{equation}
\begin{aligned}
    \int_{B_r} |\nabla u|^2 \, d\bm{x} &= \int_{\partial B_r} u\, \partial_{\bm{n}} u \, ds + \int_\Gamma (u_1\p_{\bm n}u_1 - u_2\p_{\bm n}u_2) \, ds, \\
    &= \int_{\partial B_r} u\, \partial_{\bm{n}} u \, ds + \int_S U \jump{\p_{\bm n}u} \, ds.
\end{aligned}
\end{equation}
Using the homogeneous equations~\eqref{eqn:arc-pde-b}--\eqref{eqn:arc-pde-d} (with $f=g=0$) and integrating by parts on $S$, we have
\begin{equation}
\begin{aligned}
    \int_{B_r} |\nabla u|^2 \, d\bm{x} &= \int_{\partial B_r} u\, \partial_{\bm{n}} u \, ds + \int_S U \Delta_\Gamma U \, ds  = \int_{\partial B_r} u\, \partial_{\bm{n}} u \, ds - \int_S |\nabla_\Gamma U|^2 \, ds.
\end{aligned}
\end{equation}
Since $u$ is harmonic in $\R^2\setminus\overline{B_r}$ and satisfies the decay condition~\eqref{eqn:arc-pde-e}, it follows that $\abs{\grad u} = \mc O(r^{-2})$ by \cite[Theorem 8.8]{McLean2000}. Consequently, we have $\int_{\p B_r}u\p_{\bm n}u\,ds \to 0$ as $r\to\infty$. 
Taking the limit $r \to \infty$, we conclude that both $ \nabla u $ and $ \nabla_\Gamma U $ must vanish, implying that $ u $ and $ U $ are constant functions.
The decay condition $u = o(1)$ as $ |\bm{x}| \to \infty $ then forces $ u \equiv 0 $ and $ U \equiv 0 $ everywhere.
\end{proof}
We now show the correspondence between solutions of \eqref{eqn:arc-pde} and \eqref{eqn:arc-bie}.
\begin{proposition}
    If $(U, \psi)$ is a solution to~\eqref{eqn:arc-bie} with $h=0$, then $(V_\Omega \psi, U)$ is the unique solution to~\eqref{eqn:arc-pde}.
\end{proposition}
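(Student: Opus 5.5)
We set $u = V_\Omega\psi$ and check each line of~\eqref{eqn:arc-pde} in turn. Uniqueness then follows from Proposition~\ref{unique-pde}.

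\emph{Harmonicity, trace and jump.} The single-layer potential $V_\Omega\psi$ is harmonic in $\R^2\setminus\Gamma$ and lies in $H^1_{loc}(\R^2)$. Its trace $V_\Gamma\psi$ is continuous across $\Gamma$. By the jump relation~\cite[Theorem~6.11]{McLean2000}, $\jump{\partial_{\bm n} V_\Omega\psi}=\psi$ on $\Gamma$. Since $\operatorname{supp}\psi\subset\overline S$, both the trace and the normal derivative are continuous across $\Gamma\setminus\overline S$. A function in $H^1$ near a point of $\Gamma\setminus\overline S$ that is harmonic on each side, with matching Cauchy data across the curve, is weakly harmonic in a full neighborhood; by Weyl's lemma it is harmonic there. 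Hence $u$ is harmonic in $\R^2\setminus\overline S$, which gives~\eqref{eqn:arc-pde-a}. The relation $V_S\psi - U = 0$ gives $u|_S = p_S V_\Gamma\psi = U$, which is~\eqref{eqn:arc-pde-b}. Substituting $\psi=\jump{\partial_{\bm n}u}$ into~\eqref{eqn:arc-bie-b} gives~\eqref{eqn:arc-pde-c}, and~\eqref{eqn:arc-pde-d} carries over verbatim.

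\emph{Decay at infinity.} This step is the only one with substance. For large $|x|$ we expand $\log|x-y| = \log|x| + \mc O(|y|/|x|)$ uniformly for $y\in\Gamma$. This gives
\begin{equation}
  V_\Omega\psi(x) = -\frac{1}{2\pi}\log|x|\,\dual{\psi}{1}_{\Gamma} + \mc O(|x|^{-1}),
\end{equation}
where the remainder is controlled by pairing $\psi\in H^{-1/2}$ with a smooth function of $y$. So we need $\dual{\psi}{1}=\int_S\psi=0$. To get it, we pair~\eqref{eqn:arc-bie-b} with the constant $1\in H^1(S)$. We integrate $-\Delta_\Gamma U$ by parts on the arc $S$, which is legitimate in the weak formulation, where~\eqref{eqn:arc-bie-b} together with the Neumann condition is understood as $(\grad_\Gamma U,\grad_\Gamma v)_S - (g,v)_{\partial S} + \dual{\psi}{v} = (f,v)_S$ for all $v\in H^1(S)$. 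Taking $v=1$ yields
\begin{equation}
  \int_S\psi = \int_S f + \sum_{p\in\partial S} g(p) = 0
\end{equation}
by the compatibility condition~\eqref{eqn:compatible}. Hence $u=\mc O(|x|^{-1})$, which is~\eqref{eqn:arc-pde-e}.

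\emph{Uniqueness.} If $(u',U')$ is another solution of~\eqref{eqn:arc-pde} with the same data, the difference solves the homogeneous problem with $f=g=0$. Proposition~\ref{unique-pde} then forces it to vanish, so $(V_\Omega\psi,U)$ is the unique solution.

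The main obstacle is making the weak sense of~\eqref{eqn:arc-bie} precise enough that pairing with $v=1$ is justified and the Neumann term appears correctly. A secondary point is the regularity argument across $\Gamma\setminus\overline S$, which uses only standard jump relations.
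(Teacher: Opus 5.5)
Your proof is correct and follows the paper's argument. You test \eqref{eqn:arc-bie-b} against the constant $1$ (picking up the Neumann term) to get $\int_S\psi=\int_S f+\sum_{p\in\partial S}g(p)=0$, hence the decay \eqref{eqn:arc-pde-e}; you read off the remaining equations from the single-layer trace and jump relations; and you invoke Proposition~\ref{unique-pde} for uniqueness, only spelling out details the paper leaves implicit (the far-field expansion of $V_\Omega\psi$ and the harmonicity of $u$ across $\Gamma\setminus\overline{S}$).
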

\begin{proof}
Suppose $ (U, \psi) $ is a solution to \eqref{eqn:arc-bie}. By integrating both sides of \eqref{eqn:arc-bie-b} and applying integration by parts, we obtain
\begin{equation}\label{eqn:int_psi}
    \int_S \psi(x) \,ds_x = \int_S f(x) \,ds_x + \sum_{p\in\partial S}g(p) = 0,
\end{equation}
which implies that the decay \cle{condition} at infinity, \eqref{eqn:arc-pde-e}, is automatically satisfied as long as $f$ and $g$ satisfy the compatibility condition~\eqref{eqn:compatible}.
Since $ V_\Omega \psi $ satisfies the Laplace equation together with the boundary conditions on $ S $, it follows that $ (V_\Omega \psi, U) $ is a solution to problem~\eqref{eqn:arc-pde}. 
Combining this with the uniqueness result in Proposition~\ref{unique-pde}, we complete the proof.
\end{proof}

To establish the existence and regularity of solutions to~\eqref{eqn:arc-bie}, we start with a weak formulation and \cle{prove the existence} of  weak solutions. In this setting, we assume that the curve is of class $C^{1,\delta}$.
Here, $h$ is allowed to be nonzero.
Given $(f,h)\in \mathbb H'\cle{=H^{-1}_{\overline{S}}(\Gamma)\times H^{\frac 12}(S)}$ and $g$ with $\norm{g}_{\partial S}<\infty$, we say $\bm u = (U, \psi)\in\mathbb H$ is a weak solution to~\eqref{eqn:arc-bie} if it satisfies
\begin{equation}\label{eqn:weak-eqn}
  a(\bm u, \bm v) = b(\bm v), \quad \forall \bm v = (\phi, \zeta) \in \mathbb H=
  \cle{ H^1(S)\times H^{-\frac 12}_{\overline{S}}(\Gamma)},
\end{equation}
where the bilinear form $a(\cdot,\cdot)$ and the linear form $b(\cdot)$ are given by
\begin{align}
    a(\bm u, \bm v) &= \inprod{\grad_\Gamma U}{\grad_\Gamma \phi}_{L^2(S)} +  \dual{\psi}{\phi}_{L^2(S)} + \dual{V_S\psi }{\zeta}_{L^2(S)} - \dual{ U}{\zeta}_{L^2(S)}, \\
    b(\bm v) & = \dual{f}{\phi}_{L^2(S)} + \dual{h}{\zeta}_{L^2(S)}+ \inprod{g}{\phi}_{\partial S}.
\end{align}

\begin{lemma}\label{lem:ab-conti}
    The bilinear form $a(\cdot, \cdot)$ is continuous and the linear form $b(\cdot)$ is bounded on $\mathbb H$.
\end{lemma}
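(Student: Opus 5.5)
The plan is to bound each of the four terms of $a(\cdot,\cdot)$, and each of the three terms of $b(\cdot)$, separately by products of the $\mathbb H$-norms of their arguments. The two main tools are the duality $H^{-s}_{\overline{S}}(\Gamma)\simeq[H^s(S)]'$ and the mapping property of the single layer operator. We first transfer the duality from Section~\ref{sec:prep} to the curve by pulling back through the parametrization $\bm X$. For $s=\tfrac12$ and $s=1$ this is allowed by Lemma~\ref{lem:norm-equv-2}, since $|s|\le k$ with $k\ge 1$. We obtain
\begin{equation*}
  |\dual{\psi}{\phi}_{L^2(S)}|\le C\norm{\psi}_{H^{-s}_{\overline{S}}(\Gamma)}\norm{\phi}_{H^s(S)}.
\end{equation*}
The Jacobian $|\partial_s\bm X|$ enters the pairing, but it is a $C^{0,\delta}$ function bounded above and below. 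It can be absorbed into the test function, because multiplication by a $C^{0,\delta}$ function is bounded on $H^{1/2}(I)$ when $\delta>1/2$, or we can argue directly on $\Gamma$ using the trace characterization.

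The terms of $a$ are then handled one at a time. The first term, $\inprod{\grad_\Gamma U}{\grad_\Gamma \phi}_{L^2(S)}$, is bounded by $\norm{U}_{H^1(S)}\norm{\phi}_{H^1(S)}$ by Cauchy--Schwarz. For the second term, $\dual{\psi}{\phi}_{L^2(S)}$, we use the estimate above with $s=\tfrac12$ together with the inclusion $H^1(S)\hookrightarrow H^{1/2}(S)$, which gives
\begin{equation*}
  |\dual{\psi}{\phi}_{L^2(S)}|\le C\norm{\psi}_{H^{-\frac12}_{\overline{S}}(\Gamma)}\norm{\phi}_{H^1(S)}.
\end{equation*}
The fourth term, $\dual{U}{\zeta}_{L^2(S)}$, is treated the same way, giving $\norm{U}_{H^{1/2}(S)}\norm{\zeta}_{H^{-1/2}_{\overline S}(\Gamma)}\le C\norm{U}_{H^1(S)}\norm{\zeta}_{H^{-1/2}_{\overline S}(\Gamma)}$.

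The third term, $\dual{V_S\psi}{\zeta}_{L^2(S)}$, needs the boundedness of $V_\Gamma:H^{-1/2}(\Gamma)\to H^{1/2}(\Gamma)$. This is standard for Lipschitz curves (McLean, Theorem 6.11 and Theorem 7.1), and our curve is $C^{1,\delta}$. Extension by zero embeds $H^{-1/2}_{\overline S}(\Gamma)$ into $H^{-1/2}(\Gamma)$ with norm one, and the restriction $p_S$ is bounded from $H^{1/2}(\Gamma)$ to $H^{1/2}(S)$ by the quotient definition of the norm. Combining these with the duality estimate gives
\begin{equation*}
  |\dual{V_S\psi}{\zeta}_{L^2(S)}|\le C\norm{\psi}_{H^{-\frac12}_{\overline{S}}(\Gamma)}\norm{\zeta}_{H^{-\frac12}_{\overline{S}}(\Gamma)}.
\end{equation*}
Summing the four bounds and applying Cauchy--Schwarz in $\R^2$ yields $|a(\bm u,\bm v)|\le C\norm{\bm u}_{\mathbb H}\norm{\bm v}_{\mathbb H}$.

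For $b$, the bound $|\dual{f}{\phi}_{L^2(S)}|\le C\norm{f}_{H^{-1}_{\overline S}(\Gamma)}\norm{\phi}_{H^1(S)}$ is the duality estimate with $s=1$. Similarly, $|\dual{h}{\zeta}_{L^2(S)}|\le C\norm{h}_{H^{1/2}(S)}\norm{\zeta}_{H^{-1/2}_{\overline S}(\Gamma)}$. The boundary term is
\begin{equation*}
  |\inprod{g}{\phi}_{\partial S}|\le\norm{g}_{\partial S}\max_{p\in\partial S}|\phi(p)|.
\end{equation*}
To control the point values of $\phi$, we pull back to $I=(-1,1)$ via Lemma~\ref{lem:norm-equv-2} and use the one-dimensional Sobolev embedding $H^1(I)\hookrightarrow C^0(\overline I)$. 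This gives $\sup|\phi|\le C\norm{\phi}_{H^1(S)}$, so the trace of $\phi$ at the endpoints $p_1,p_2$ is well defined and bounded.

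The main obstacle is the third term, the continuity of $V_S$ between the spaces with support and restriction. The key point there is the correct use of the extension-by-zero and restriction maps, so that the standard $H^{-1/2}\to H^{1/2}$ mapping property on the closed curve $\Gamma$ can be invoked. One more check is needed. The duality on curves was only stated on $\R^n$ in Section~\ref{sec:prep}, so it must be transferred through the parametrization, and Lemma~\ref{lem:norm-equv-2} for $|s|\le1$ with $k\ge1$ allows this.
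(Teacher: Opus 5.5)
Your proposal is correct and follows essentially the same route as the paper. It bounds each term using the $H^{-s}_{\overline S}(\Gamma)$--$H^{s}(S)$ duality with $H^1(S)\hookrightarrow H^{\frac12}(S)$, the boundedness of $V_\Gamma:H^{-\frac12}(\Gamma)\to H^{\frac12}(\Gamma)$ followed by restriction to $S$, and a one-dimensional Sobolev embedding for the endpoint values.

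There are two minor slips, neither affecting the conclusion. The bound $|\inprod{g}{\phi}_{\partial S}|\le\norm{g}_{\partial S}\max_p|\phi(p)|$ is off by a factor $\sqrt2$; apply Cauchy--Schwarz over the two endpoints instead. Absorbing the $C^{0,\delta}$ Jacobian into the test function fails for the $s=1$ pairing (and for $s=\frac12$ when $\delta\le\frac12$). Rely instead on your fallback, the intrinsic dual-norm estimate on $\Gamma$, which is what the paper takes for granted.
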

\begin{proof}
    By definition, $\dual{V_S\psi}{\zeta}_{L^2(S)} = \dual{V_\Gamma\psi}{\zeta}_{L^2(\Gamma)}$ for every $\zeta,\psi\in H^{-\frac{1}{2}}_{\overline{S}}(\Gamma)$ and
    \begin{equation}
        \norm{V_S\psi}_{H^{\frac{1}{2}}(S)} \le \norm{V_\Gamma \psi}_{H^\frac{1}{2}(\Gamma)} \le C \norm{\psi}_{H^{-\frac{1}{2}}(\Gamma)} = C\norm{\psi}_{H^{-\frac{1}{2}}_{\overline{S}}(\Gamma)}, \quad \forall \psi\in H^{-\frac{1}{2}}_{\overline{S}}(\Gamma).
    \end{equation}
    where we have used that the single-layer integral on a closed curve $V_\Gamma:H^{-\frac{1}{2}}(\Gamma)\to H^{\frac{1}{2}}(\Gamma)$ is bounded \cite[Theorem 7.1]{McLean2000}.
    \rev{The following estimate uses Cauchy--Schwarz in the duality pairings and the continuous embeddings $H^1(S)\hookrightarrow H^{\frac12}(S)$ and $H^{-\frac12}_{\overline S}(\Gamma)\hookrightarrow H^{-1}_{\overline S}(\Gamma)$.}
    For every $\bm u, \bm v\in\mathbb H$,
    \begin{equation}
    \begin{aligned}
        |a(\bm u, \bm v)| &\le C (\norm{\grad_\Gamma U}_{L^2(S)} \norm{\grad_\Gamma\phi}_{L^2(S)} + \norm{\psi}_{H^{-\frac{1}{2}}_{\overline{S}}(\Gamma)} \norm{\phi}_{H^\frac{1}{2}(S)}  \\
        &\quad+  \norm{V_S\psi}_{H^\frac{1}{2}(S)} \norm{\zeta}_{H^{-\frac{1}{2}}_{\overline{S}}(\Gamma)} + \norm{U}_{H^\frac{1}{2}(S)} \norm{\zeta}_{H^{-\frac{1}{2}}_{\overline{S}}(\Gamma)} ) \\
        &\le C ( \norm{U}_{H^1(S)} \norm{\phi}_{H^1(S)}  +  \norm{\psi}_{H^{-\frac{1}{2}}_{\overline{S}}(\Gamma)} \norm{\phi}_{H^1(S)} \\
        &\quad + \norm{\psi}_{H^{-\frac{1}{2}}_{\overline{S}}(\Gamma)} \norm{\zeta}_{H^{-\frac{1}{2}}_{\overline{S}}(\Gamma)} + \norm{U}_{H^1(S)} \norm{\zeta}_{H^{-\frac{1}{2}}_{\overline{S}}(\Gamma)} ) = C \norm{\bm u}_{\mathbb H} \norm{\bm v}_{\mathbb H}.
    \end{aligned} 
    \end{equation}
    For every $\bm v = (\phi, \zeta)\in\mathbb H$, \rev{Cauchy--Schwarz and the Sobolev embedding $H^1(S)\hookrightarrow C^{0,\frac{1}{2}}(S)$ for the endpoint terms lead to the following estimate,}
    \begin{equation}
    \begin{aligned}
        |b(\bm v)| &\le \norm{f}_{H^{-1}_{\overline{S}}(\Gamma)} \norm{\phi}_{H^1(S)} +  \norm{h}_{H^\frac{1}{2}(S)}\norm{\zeta}_{H^{-\frac{1}{2}}_{\overline{S}}(\Gamma)} + (\sum_{p\in \partial S} |g(p)|^2)^{\frac{1}{2}} (\sum_{p\in \partial S}|\phi(p)|^2)^{\frac{1}{2}} \\
        &\le \norm{f}_{H^{-1}_{\overline{S}}(\Gamma)} \norm{\phi}_{H^1(S)} +\norm{h}_{H^\frac{1}{2}(S)}\norm{\zeta}_{H^{-\frac{1}{2}}_{\overline{S}}(\Gamma)}+ C \norm{g}_{\partial S} \norm{\phi}_{H^1(S)} \\
        &\le C\paren{\norm{f}_{H^{-1}_{\overline{S}}(\Gamma)} +\norm{h}_{H^\frac{1}{2}(S)}+ \norm{g}_{\partial S}} \norm{\bm v}_{\mathbb H}.
    \end{aligned}
    \end{equation}
    This completes the proof.
\end{proof}

\begin{lemma}\label{lem:vg-decomp}
    Let $\Gamma$ be of class $C^{1,\delta}$ with $\delta\in (0,1)$ and non-self-intersecting. The operator $V_\Gamma:H^{-\frac{1}{2}}(\Gamma)\to H^{\frac{1}{2}}(\Gamma)$ can be decomposed into $V_\Gamma = V_{\Gamma, 0} + V_{\Gamma, 1}$ where $V_{\Gamma,0}:H^{-\frac{1}{2}}(\Gamma)\to H^{\frac{1}{2}}(\Gamma)$ is positive and bounded below, i.e., there exists $C>0$ such that
    \begin{equation}
        \dual{V_{\Gamma, 0}\psi}{\psi}_{L^2(\Gamma)} \ge C \norm{\psi}_{H^{-\frac{1}{2}}(\Gamma)}^2, \quad \forall \psi\in H^{-\frac{1}{2}}(\Gamma),
    \end{equation}
    and $V_{\Gamma,1}:H^{-1+s}(\Gamma)\to H^{s+\varepsilon}(\Gamma)$ is bounded for every $s\in[0,1]$ and $\varepsilon < \delta$.
\end{lemma}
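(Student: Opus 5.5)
We pull the operator back to the circle via the regular parametrization $\wt{\bm X}\in C^{1,\delta}(\T)$ and compare it with the logarithmic kernel of the unit circle. Write, for $\psi$ on $\Gamma$ and $\vph(t)=\psi(\wt{\bm X}(t))|\p_t\wt{\bm X}(t)|$,
\begin{equation*}
  (V_\Gamma\psi)(\wt{\bm X}(s)) = -\frac{1}{2\pi}\int_{\T}\log\Bigl|2\sin\tfrac{s-t}{2}\Bigr|\vph(t)\,dt
  -\frac{1}{2\pi}\int_{\T}\log\frac{|\wt{\bm X}(s)-\wt{\bm X}(t)|}{|2\sin\frac{s-t}{2}|}\,\vph(t)\,dt .
\end{equation*}
The first operator $A_0$ is diagonal in Fourier series: it sends $e^{ikt}$ to $\frac{1}{2|k|}e^{ikt}$ for $k\neq 0$ and annihilates constants. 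We set $V_{\Gamma,0}\psi := (A_0\vph + \vph_0)\circ\wt{\bm X}^{-1}$, where $\vph_0$ is the mean of $\vph$, so that the symbol becomes $\max(1,2|k|)^{-1}$. The remainder is $V_{\Gamma,1} := V_\Gamma - V_{\Gamma,0}$, i.e. the operator with the smooth-ratio kernel minus the projection onto constants.

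\textbf{Coercivity and boundedness of $V_{\Gamma,0}$.} The pairing is
\begin{equation*}
  \dual{V_{\Gamma,0}\psi}{\psi}_{L^2(\Gamma)} = \int_\T (A_0\vph+\vph_0)\,\overline{\vph}\,dt = 2\pi\sum_k \frac{|\hat\vph_k|^2}{\max(1,2|k|)} \simeq \norm{\vph}_{H^{-\frac12}(\T)}^2 .
\end{equation*}
This is positive and bounded below. Two points need care. The duality pairing on $\Gamma$ absorbs the Jacobian $|\p_t\wt{\bm X}|$, since the measure $ds$ pulls back to $|\p_t\wt{\bm X}|dt$. Also, $\psi\mapsto\vph$ is an isomorphism of $H^{-\frac12}$ spaces. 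This holds because multiplication by the $C^{0,\delta}$ Jacobian is bounded on $H^{\pm\frac12}(\T)$, using $\delta>\frac12$ or, failing that, interpolation and duality, and it is then combined with Lemma~\ref{lem:norm-equv-1}. Boundedness $H^{-\frac12}\to H^{\frac12}$ is immediate from the symbol.

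\textbf{Smoothing of $V_{\Gamma,1}$.} Set $K(s,t)=\log\bigl(|\wt{\bm X}(s)-\wt{\bm X}(t)|/|2\sin\frac{s-t}{2}|\bigr)$. The arc–chord condition and regularity ensure the ratio inside the logarithm is bounded above and below by positive constants. Writing $\wt{\bm X}(s)-\wt{\bm X}(t)=(s-t)\int_0^1\p\wt{\bm X}(t+\theta(s-t))\,d\theta$ shows that the ratio, hence $K$, is $C^{0,\delta}$ jointly. Moreover, $\p_sK$ exists and is bounded by $C|s-t|^{\delta-1}$, because the difference quotient of $\p\wt{\bm X}$ is $C^{0,\delta}$. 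Thus the kernel is one H\"older order better than a standard weakly singular kernel. We would then show that $T\vph(s)=\int K(s,t)\vph(t)\,dt$ maps $H^{-1+s}(\T)\to H^{s+\varepsilon}(\T)$ for $s\in[0,1]$ and $\varepsilon<\delta$. At the endpoints:
\begin{itemize}
  \item For $s=1$, i.e. $L^2\to H^{1+\varepsilon}$, we differentiate once in $s$. The kernel $\p_sK$ is of the form $|s-t|^{\delta-1}\times$(H\"older), which maps $L^2\to H^{\varepsilon}$ for $\varepsilon<\delta$, via a Schur/fractional-integration estimate on H\"older difference quotients.
  \item For $s=0$, i.e. $H^{-1}\to H^{\varepsilon}$, we use duality. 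The transpose kernel $K(t,s)$ has the same structure, so $T^*:H^{-\varepsilon}\to H^{1}$. To get this, write $\vph=\p_t\chi+c$ with $\chi\in L^2$, integrate by parts in $t$, and use the bound on $\p_tK$ in the same way.
\end{itemize}
Interpolation between these gives all $s\in[0,1]$. The rank-one projection onto constants is infinitely smoothing, and Lemma~\ref{lem:norm-equv-1} transfers the result back to $\Gamma$ for $|s|\le1$.

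The main obstacle is the endpoint mapping estimates for $T$ with only $C^{1,\delta}$ regularity. The kernel is not smooth, so symbol calculus does not apply. We must carefully prove that a kernel whose first derivative behaves like $|s-t|^{\delta-1}$ times a H\"older function gains $1+\varepsilon$ derivatives for every $\varepsilon<\delta$, and not just $1$. We would handle this by estimating the $H^\varepsilon$ Gagliardo seminorm of $\int\p_sK(\cdot,t)\vph(t)\,dt$ directly, splitting into the near-diagonal and far regions.
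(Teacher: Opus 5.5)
Your proposal is correct and follows essentially the same route as the paper's appendix proof: the same splitting of $\log|\wt{\bm X}(s)-\wt{\bm X}(t)|$ against $\log|2\sin\frac{s-t}{2}|$, the same rank-one constant-mode correction that makes $V_{\Gamma,0}$ coercive, and the same two endpoint estimates. These are $H^{-1}\to H^{\varepsilon}$ (write $u=\bar u+\p_t v$ and integrate by parts) and $L^2\to H^{1+\varepsilon}$ (differentiate in $s$), both reduced to $|s-t|^{\delta-1}$ kernel bounds and a Schur test on Gagliardo difference quotients, followed by interpolation.

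The one justification to repair is your Jacobian remark. For $\delta\le\frac12$, multiplication by a $C^{0,\delta}$ function is in general not bounded on $H^{\frac12}(\T)$, and interpolation or duality cannot fix that. You do not need it, though: the identity $\int_\Gamma\psi f\,ds=\int_\T\vph\,(f\circ\wt{\bm X})\,dt$ shows that $\psi\mapsto\vph$ is dual to the pullback $f\mapsto f\circ\wt{\bm X}$, which is an isomorphism $H^{\frac12}(\Gamma)\to H^{\frac12}(\T)$ by interpolating the $L^2$ and $H^1$ cases. Even simpler, use a constant-speed parametrization. That makes the Jacobian constant, as the paper's formula on $\T$ implicitly assumes. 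It also lets you transfer $H^{s+\varepsilon}$ norms with $s+\varepsilon>1$ back to $\Gamma$, which Lemma~\ref{lem:norm-equv-1} with $k=1$ does not cover.
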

\begin{proof}
The proof is technical and is provided in the Appendix~\ref{pf-vg-decom}.
\end{proof}
The above result can be used directly to obtain a decomposition for $V_S$.
\begin{lemma}\label{lem:vs-decomp}
    Let $S$ be of class $C^{1,\delta}$ with $\delta\in (0,1)$ and non-self-intersecting. The operator $V_S:H^{-\frac{1}{2}}_{\overline{S}}(\Gamma)\to H^{\frac{1}{2}}(S)$ can be decomposed into $V_S = V_{S, 0} + V_{S, 1}$ where $V_{S,0}:H^{-\frac{1}{2}}_{\overline{S}}(\Gamma)\to H^{\frac{1}{2}}(S)$ is positive and bounded below, i.e., there exists $C>0$ such that
    \begin{equation}
        \dual{V_{S, 0}\psi}{\psi}_{L^2(S)} \ge C \norm{\psi}_{H^{-\frac{1}{2}}_{\overline{S}}(\Gamma)}^2, \quad \forall \psi\in H^{-\frac{1}{2}}_{\overline{S}}(\Gamma),
    \end{equation}
    and $V_{S,1}:H^{-1+s}_{\overline{S}}\rev{(\Gamma)}\to H^{s+\varepsilon}(S)$ is bounded for every $s\in[0,1]$ and $\varepsilon < \delta$.
\end{lemma}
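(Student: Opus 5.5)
The plan is to obtain the decomposition of $V_S$ by restricting the decomposition of $V_\Gamma$ from Lemma~\ref{lem:vg-decomp} to densities supported in $\overline S$. Since $\Gamma\supset S$ is a $C^{1,\delta}$ closed curve (by the standing assumption that $\bm X$ extends to $\wt{\bm X}\in C^{k,\alpha}(\T)$), we set
\begin{equation*}
  V_{S,0}\psi := p_S V_{\Gamma,0}\psi, \qquad V_{S,1}\psi := p_S V_{\Gamma,1}\psi, \qquad \psi\in H^{-\frac12}_{\overline S}(\Gamma).
\end{equation*}
This uses the inclusion $H^{-\frac12}_{\overline S}(\Gamma)\subset H^{-\frac12}(\Gamma)$, whose norm is the restricted norm. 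Then $V_S = p_S V_\Gamma = V_{S,0}+V_{S,1}$ holds by linearity.

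For the mapping properties, we use that restriction $p_S: H^t(\Gamma)\to H^t(S)$ has norm at most one. This follows from the quotient definition of $H^t(S)$ via extensions, transported through Lemma~\ref{lem:norm-equv-1} and Lemma~\ref{lem:norm-equv-2}. Boundedness of $V_{S,0}:H^{-\frac12}_{\overline S}(\Gamma)\to H^{\frac12}(S)$ is then immediate. Similarly, for $s\in[0,1]$ and $\varepsilon<\delta$, the space $H^{-1+s}_{\overline S}(\Gamma)$ sits inside $H^{-1+s}(\Gamma)$ with equal norms. Lemma~\ref{lem:vg-decomp} gives $V_{\Gamma,1}\psi\in H^{s+\varepsilon}(\Gamma)$, and restricting gives $V_{S,1}\psi\in H^{s+\varepsilon}(S)$ with the corresponding bound.

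For coercivity, take $\psi\in H^{-\frac12}_{\overline S}(\Gamma)$. The key identity is
\begin{equation*}
  \dual{p_S V_{\Gamma,0}\psi}{\psi}_{L^2(S)} = \dual{V_{\Gamma,0}\psi}{\psi}_{L^2(\Gamma)}.
\end{equation*}
This is the same fact already used in the proof of Lemma~\ref{lem:ab-conti} for $V_S$. To justify it, note that the pairing $H^{\frac12}(S)\times H^{-\frac12}_{\overline S}(\Gamma)$ is defined by extending the $L^2$ pairing, and it depends only on values in $S$ when the second argument is supported in $\overline S$. I would check this on a dense class: take $\psi$ a limit of $C^\infty_c(S)$ functions, which is valid for the Lipschitz arc $S$ by the density remark in Section~\ref{sec:prep} transported to $\Gamma$ by parametrization. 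For such $\psi$ the identity is trivial, and it then passes to the limit by continuity of both pairings. Combining this with the lower bound in Lemma~\ref{lem:vg-decomp} and the norm identity $\norm{\psi}_{H^{-\frac12}(\Gamma)}=\norm{\psi}_{H^{-\frac12}_{\overline S}(\Gamma)}$ yields the coercivity estimate.

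The only delicate point is this compatibility of the duality pairing on $S$ with the one on $\Gamma$, together with the density of smooth compactly supported functions in $H^{-\frac12}_{\overline S}(\Gamma)$. I would handle both by pulling back through $\wt{\bm X}$ to $\T$ with $I$ an interval, using Lemma~\ref{lem:norm-equv-2}. After that, the rest of the argument is bookkeeping.
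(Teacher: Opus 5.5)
Your proposal is correct and follows the paper's proof. Like the paper, you define $V_{S,i}=p_S V_{\Gamma,i}$, transfer coercivity through the identity $\langle V_{S,0}\psi,\psi\rangle_{L^2(S)}=\langle V_{\Gamma,0}\psi,\psi\rangle_{L^2(\Gamma)}$ and the equality of the $H^{-\frac12}_{\overline S}(\Gamma)$ and $H^{-\frac12}(\Gamma)$ norms, and restrict the closed-curve estimate for $V_{\Gamma,1}$. Your density justification of that pairing identity is a detail the paper takes for granted, and it is sound.
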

\begin{proof}
    Since $H^{-\frac{1}{2}}_{\overline{S}}(\Gamma)$ is a closed subspace of $H^{-\frac{1}{2}}(\Gamma)$, we can define $V_{S,i}$ as restrictions of $V_{\Gamma,i}$ for $i=0,1$ such that,
    \begin{equation}
        V_{S,i}\psi = \rev{p_S} V_{\Gamma,i}\psi,        \quad \forall\psi \rev{\in} H^{-\frac{1}{2}}_{\overline{S}}(\Gamma).
    \end{equation}
    Then it follows that for every $\psi\in H^{-\frac{1}{2}}_{\overline{S}}(\Gamma)$,
    \begin{equation}
        \dual{V_{S,0}\psi}{\psi}_{L^2(S)} = \dual{V_{\Gamma,0}\psi}{\psi}_{L^2(\Gamma)} \ge C \norm{\psi}_{H^{-\frac{1}{2}}(\Gamma)}^2 = C\norm{\psi}_{H^{-\frac{1}{2}}_{\overline{S}}(\Gamma)}^2,
    \end{equation}
    \rev{Moreover, for every $s\in[0,1]$, Lemma~\ref{lem:vg-decomp} gives the closed-curve estimate}
    \begin{equation}
        \rev{\norm{V_{\Gamma,1}\psi}_{H^{s+\varepsilon}(\Gamma)}
        \le C\norm{\psi}_{H^{-1+s}(\Gamma)}
        = C\norm{\psi}_{H^{-1+s}_{\overline S}(\Gamma)}.}
    \end{equation}
    \rev{Restriction from $\Gamma$ to $S$ leads to}
    \begin{equation}
        \rev{\norm{V_{S,1}\psi}_{H^{s+\varepsilon}(S)}
        \le C\norm{\psi}_{H^{-1+s}_{\overline S}(\Gamma)}.}
    \end{equation}
    \rev{Taking $s=\frac12$ shows that $V_{S,1}$ maps $H^{-\frac12}_{\overline S}(\Gamma)$ boundedly into $H^{\frac12+\varepsilon}(S)$. Since the embedding $H^{\frac{1}{2}+\varepsilon}(S) \hookrightarrow H^{\frac{1}{2}}(S)$ is compact for $\varepsilon > 0$, we can also conclude that the operator $V_{S,1}:H^{-\frac{1}{2}}_{\overline{S}}(\Gamma)\to H^{\frac{1}{2}}(S)$ is compact.}
\end{proof}

In view of the proof of Lemma~\ref{lem:vg-decomp}, we also have
\begin{lemma}\label{lem:vs-reg}
    Let $S$ be of class $C^{1,\delta}$ with $\delta\in (0,1)$ and non-self-intersecting. The operator $V_S:H^{-\frac{1}{2}+\varepsilon}_{\overline{S}}(\Gamma)\to H^{\frac{1}{2}+\varepsilon}(S)$ is bounded for every $|\varepsilon| < \min(\frac{1}{2},\delta)$. 
\end{lemma}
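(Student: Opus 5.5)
The plan is to reuse the splitting $V_\Gamma = V_{\Gamma,0}+V_{\Gamma,1}$ from Lemma~\ref{lem:vg-decomp} and bound each piece separately on the scale $H^{-\frac12+\varepsilon}_{\overline S}(\Gamma)\to H^{\frac12+\varepsilon}(S)$. Since $H^{-\frac12+\varepsilon}_{\overline S}(\Gamma)$ sits isometrically inside $H^{-\frac12+\varepsilon}(\Gamma)$, and restriction $p_S:H^{\frac12+\varepsilon}(\Gamma)\to H^{\frac12+\varepsilon}(S)$ is bounded by definition of the quotient norm, it suffices to show that $V_\Gamma: H^{-\frac12+\varepsilon}(\Gamma)\to H^{\frac12+\varepsilon}(\Gamma)$ is bounded for $|\varepsilon|<\min(\frac12,\delta)$.

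For the principal part, I will use the construction of $V_{\Gamma,0}$ from the proof of Lemma~\ref{lem:vg-decomp}. There it is (up to the pullback by $\wt{\bm X}$ and Lemma~\ref{lem:norm-equv-1}) a Fourier multiplier on $\T$ of the form $\frac12|k|^{-1}$ for $k\ne0$, plus a positive constant on the zero mode. This is the operator $-\frac1{2\pi}\log|2\sin\frac{s-t}{2}|$, composed with multiplication by $|\partial_s\wt{\bm X}|$ and a smoothing correction. A multiplier of order $-1$ maps $H^{t}(\T)\to H^{t+1}(\T)$ for every $t$. Multiplication by the $C^{0,\delta}$ Jacobian $|\partial_s\wt{\bm X}|$ is bounded on $H^{t}(\T)$ for $|t|<\delta$, which needs $|{-\frac12}+\varepsilon|<\delta$ on the input side and $|\frac12+\varepsilon|$ is handled via the smoothing gained. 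Together with Lemma~\ref{lem:norm-equv-1}, this transfers the estimate back to $\Gamma$ for $|\varepsilon|<\frac12$, as needed to stay within the range where these multiplier and multiplication estimates apply.

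For the remainder, Lemma~\ref{lem:vg-decomp} gives $V_{\Gamma,1}:H^{-1+s}(\Gamma)\to H^{s+\varepsilon'}(\Gamma)$ for $s\in[0,1]$ and $\varepsilon'<\delta$. Taking $s=\frac12+\varepsilon\in(0,1)$ yields boundedness of $V_{\Gamma,1}:H^{-\frac12+\varepsilon}(\Gamma)\to H^{\frac12+\varepsilon+\varepsilon'}(\Gamma)\hookrightarrow H^{\frac12+\varepsilon}(\Gamma)$. Summing the two pieces gives the claim.

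The main obstacle is the principal part. The kernel $\log|\bm X(s)-\bm X(t)|$ differs from $\log|s-t|$ by $\log\frac{|\bm X(s)-\bm X(t)|}{|s-t|}$, which is only $C^{0,\delta}$. The argument therefore has to track exactly which Sobolev indices survive multiplication by $C^{0,\delta}$ functions, and this is what forces $|\varepsilon|<\delta$. I would first try to extract the needed mapping property directly from the appendix proof, interpolating between the endpoint cases $s=0,1$. If the Jacobian factor causes trouble, I would instead absorb it into $V_{\Gamma,1}$, which is allowed since the arc-length parametrization is $C^{1,\delta}$, and then $V_{\Gamma,0}$ becomes a pure multiplier.
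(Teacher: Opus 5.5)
Your overall structure matches the paper's proof. You reduce to the closed curve via $V_S\psi=p_SV_\Gamma\psi$, bound $V_{\Gamma,0}$ as an order $-1$ multiplier, and take $s=\frac12+\varepsilon\in(0,1)$ in Lemma~\ref{lem:vg-decomp} for $V_{\Gamma,1}$. You then use the embedding $H^{\frac12+\varepsilon+\eta}\hookrightarrow H^{\frac12+\varepsilon}$ and restrict to $S$.

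The step that goes wrong is your treatment of the principal part. In the paper's construction, $V_{\Gamma,0}=\mathcal L_1+\mathcal R$ contains no Jacobian. It is the multiplier $\frac{1}{2|k|}$ for $k\neq 0$ plus a rank-one term, so it maps $H^{-\frac12+\varepsilon}(\mathbb T)\to H^{\frac12+\varepsilon}(\mathbb T)$ for every real $\varepsilon$, with no condition on $\delta$.

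The factor $|\partial_s\widetilde{\bm X}|$ appears only because you pull $\psi$ back as the function $\psi\circ\bm X$. Pull it back instead as a density, i.e.\ as the distribution $g\mapsto\langle\psi,g\circ\bm X^{-1}\rangle_{L^2(\Gamma)}$ on $\mathbb T$, which is how negative-order spaces are transported by duality in Lemma~\ref{lem:norm-equv-1}. Then the Jacobian is absorbed, and the appendix formula for $V_\Gamma\psi$ holds as written.

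Your first route, bounding multiplication by the $C^{0,\delta}$ Jacobian on $H^{-\frac12+\varepsilon}(\mathbb T)$, is not valid in the stated range. It needs $\frac12-\varepsilon<\delta$, which $|\varepsilon|<\min(\frac12,\delta)$ does not give (take $\delta=\frac14$, $\varepsilon=0$). So your conclusion ``for $|\varepsilon|<\frac12$'' does not follow from it. Indeed, if the Jacobian really sat inside $V_{\Gamma,0}$, even the case $\varepsilon=0$ of Lemma~\ref{lem:vg-decomp} would already require $\delta>\frac12$.

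Your fallback rescues the argument only as a constant-speed (rescaled arc-length) reparametrization. That reparametrization is still $C^{1,\delta}$ and makes the Jacobian constant. Literally moving $\mathcal L_1$ composed with multiplication by $|\partial_s\widetilde{\bm X}|-1$ into $V_{\Gamma,1}$ would not work. That operator gains at most one derivative, and even that only when the multiplication is bounded, so it breaks the mapping property $H^{-1+s}\to H^{s+\eta}$ that $V_{\Gamma,1}$ must have.

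Your account of where the restriction $|\varepsilon|<\delta$ comes from is also off. The argument only uses $|\varepsilon|<\frac12$, so that $s=\frac12+\varepsilon\in(0,1)$. The parameter $\delta$ enters solely through the gain $\eta<\delta$ of $V_{\Gamma,1}$, which the embedding then discards. The limited smoothness of $\log\frac{|\bm X(s)-\bm X(t)|}{|s-t|}$ is handled entirely inside Lemma~\ref{lem:vg-decomp}, through the kernel estimates of Lemma~\ref{lem:L2-op}, not by any pointwise-multiplication estimate. So your ``main obstacle'' is not an obstacle once you cite that lemma and use the density pullback.
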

\begin{proof}
\rev{Since \(V_S\psi=p_SV_\Gamma\psi\), it suffices to estimate \(V_\Gamma\psi\). In the proof of Lemma~\ref{lem:vg-decomp}, \(V_{\Gamma,0}=\mc L_1+\mc R\), where \(\mc L_1\) is the order \(-1\) Fourier multiplier with symbol \(1/(2|k|)\), \(k\ne0\), and \(\mc R\) is smoothing. Hence}
\[
    \rev{\norm{V_{\Gamma,0}\psi}_{H^{\frac12+\varepsilon}(\Gamma)}
    \le C\norm{\psi}_{H^{-\frac12+\varepsilon}(\Gamma)}.}
\]
\rev{For \(V_{\Gamma,1}\), taking \(s=\frac12+\varepsilon\in(0,1)\) in Lemma~\ref{lem:vg-decomp} gives, for \(0<\eta<\delta\),}
\[
    \rev{\norm{V_{\Gamma,1}\psi}_{H^{\frac12+\varepsilon+\eta}(\Gamma)}
    \le C\norm{\psi}_{H^{-\frac12+\varepsilon}(\Gamma)}.}
\]
\rev{The embedding \(H^{\frac12+\varepsilon+\eta}(\Gamma)\hookrightarrow H^{\frac12+\varepsilon}(\Gamma)\), together with restriction from \(\Gamma\) to \(S\), yields}
\[
    \rev{\norm{V_S\psi}_{H^{\frac12+\varepsilon}(S)}
    \le C\norm{\psi}_{H^{-\frac12+\varepsilon}_{\overline S}(\Gamma)},}
\]
\rev{which proves the lemma.}
\end{proof}

Since $a(\cdot,\cdot)$ is continuous in $\mathbb H$, we can define a bounded linear operator $\mathcal{A}:\mathbb H\to \mathbb H'$ by
\begin{equation}
    \dual{\mc A \bm u}{\bm v}_{L^2(S)} = a(\bm u, \bm v),\quad \forall \bm u, \bm v\in\mathbb H.
\end{equation}
It can be shown that the operator $\mc A$ satisfies a G\aa rding inequality.
\begin{theorem}\label{thm:solvability}
    Let $S$ be of class $C^{1,\delta}$ with $\delta\in (0,1)$. The following results hold:
    
    (i) There exists a compact operator $\mc B:\mathbb H\to \mathbb H'$ and a constant $C>0$ such that the G\aa rding inequality holds
    \begin{equation}\label{eqn:garding}
        \dual{(\mc A + \mc B) \bm u}{\bm u}_{L^2(S)} \ge C \norm{\bm u}_{\mathbb H}^2, \quad \forall \bm u\in \mathbb H.
    \end{equation}
    (ii) Given $f\in H^{-1}_{\overline{S}}(\Gamma)$ and $g$ with $\norm{g}_{\partial S}<\infty$, there exists a unique weak solution $(U, \psi)\in\mathbb H$ to \eqref{eqn:weak-eqn} with
    \begin{equation}
        \norm{U}_{H^1(S)} + \norm{\psi}_{H^{-\frac{1}{2}}_{\overline{S}}(\Gamma)} \le C\paren{ \norm{f}_{H^{-1}_{\overline{S}}(\Gamma)} +\norm{h}_{H^\frac{1}{2}(S)}+ \norm{g}_{\partial S}}.
    \end{equation}   
\end{theorem}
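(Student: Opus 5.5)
Part (i) can be obtained by splitting $a$ into a coercive main part and compact perturbations. Evaluated on the diagonal, with $\bm u=(U,\psi)$ real-valued, the form is
\begin{equation*}
a(\bm u,\bm u)=\norm{\grad_\Gamma U}_{L^2(S)}^2+\dual{\psi}{U}_{L^2(S)}+\dual{V_S\psi}{\psi}_{L^2(S)}-\dual{U}{\psi}_{L^2(S)} .
\end{equation*}
The two cross terms cancel; this is the reason for the sign convention in the second equation of \eqref{eqn:arc-bie}. For complex data I would use the real part, which cancels in the same way. Lemma~\ref{lem:vs-decomp} gives $V_S=V_{S,0}+V_{S,1}$ with $\dual{V_{S,0}\psi}{\psi}\ge C\norm{\psi}^2_{H^{-1/2}_{\overline S}(\Gamma)}$. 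I then define
\begin{equation*}
\dual{\mc B\bm u}{\bm v}_{L^2(S)} = \inprod{U}{\phi}_{L^2(S)} - \dual{V_{S,1}\psi}{\zeta}_{L^2(S)} .
\end{equation*}
With this choice,
\begin{equation*}
\dual{(\mc A+\mc B)\bm u}{\bm u}\ge \norm{U}_{H^1(S)}^2+C\norm{\psi}^2_{H^{-1/2}_{\overline S}(\Gamma)} .
\end{equation*}
Compactness of $\mc B$ holds because $H^1(S)\hookrightarrow L^2(S)\hookrightarrow (H^1(S))'$ is compact by Rellich on the bounded interval. Also, $V_{S,1}:H^{-1/2}_{\overline S}(\Gamma)\to H^{1/2}(S)$ is compact; this was shown in the proof of Lemma~\ref{lem:vs-decomp}, via boundedness into $H^{1/2+\varepsilon}(S)$ followed by a compact embedding. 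So each block of $\mc B$ is compact into the corresponding factor of $\mathbb H'$.

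For (ii), the G\aa rding inequality makes $\mc A+\mc B$ coercive, hence an isomorphism $\mathbb H\to\mathbb H'$ by Lax--Milgram. Therefore $\mc A=(\mc A+\mc B)-\mc B$ is Fredholm of index zero. By the Fredholm alternative it suffices to prove injectivity of $\mc A$. The a priori bound then follows from the bounded inverse together with the estimate for $b$ in Lemma~\ref{lem:ab-conti}. This yields a constant depending on the norm of $\mc A^{-1}$, as stated.

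Injectivity is the main step. Suppose $a(\bm u,\bm v)=0$ for all $\bm v$. Testing with $\zeta$ gives $V_S\psi=U$ on $S$. Testing with $\phi$ gives the weak form of $-\Delta_\Gamma U+\psi=0$ with $\bm\nu\cdot\grad_\Gamma U=0$ on $\partial S$. Taking $\phi\equiv1$ yields $\int_S\psi=\dual{\psi}{1}=0$. Now set $u=V_\Omega\psi$. The jump relations for the single-layer potential give $u\in H^1_{loc}(\R^2)$, $u$ harmonic off $\overline S$, $u|_S=V_S\psi=U$ and $\jump{\partial_{\bm n}u}=\psi$. The vanishing total charge $\int\psi=0$ removes the logarithmic term in the far field, so $u=\mc O(|x|^{-1})$. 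Thus $(u,U)$ solves \eqref{eqn:arc-pde} with $f=g=0$, and Proposition~\ref{unique-pde} gives $U=0$ and $u=0$, hence $\psi=\jump{\partial_{\bm n}u}=0$.

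The delicate points are justifying the energy identity of Proposition~\ref{unique-pde} at the $H^{-1/2}$ level of regularity, where it is applied through the Green formula weakly, and checking that the test with $\phi\equiv1$ is legitimate in the weak formulation. Both seem routine but need care. An alternative route is to directly check that $\dual{V_S\psi}{\psi}\ge0$ when $\int\psi=0$, which would give $\norm{\grad_\Gamma U}^2+\dual{V_S\psi}{\psi}=0$ directly from $a(\bm u,\bm u)=0$.
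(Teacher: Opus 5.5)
Your proof is correct. Part (i) and the Fredholm reduction in (ii) follow the paper's argument. Your $\mc B$, built from $V_{S,1}$, is in fact the operator the paper needs: its display \eqref{eqn:def-B} writes $V_S$, but with $V_S$ the $\psi$-terms would cancel completely, and both its G\aa rding computation and its later error analysis use $V_{S,1}$.

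The real difference is the injectivity step.

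The paper stays on $S$. Testing with $\bm v=(1,0)$ gives $\dual{1}{\psi}_{L^2(\Gamma)}=0$, and testing with $\bm v=\bm u$ gives $\norm{\grad_\Gamma U}_{L^2(S)}^2+\dual{V_\Gamma\psi}{\psi}_{L^2(\Gamma)}=0$. Strict positive definiteness of $V_\Gamma$ on mean-zero densities \cite[Theorem~8.12]{McLean2000} then forces $\psi=0$, and the $\zeta$-equation gives $U=0$. This is exactly your ``alternative route.''

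Your main route instead lifts to $\R^2$ through $u=V_\Omega\psi$ and invokes Proposition~\ref{unique-pde}. It costs the jump relations for $H^{-\frac12}$ densities and the far-field decay from zero total charge. Moreover, the energy identity inside Proposition~\ref{unique-pde} is the Green-formula argument by which positivity of $V_\Gamma$ on mean-zero densities is usually proved, so you are effectively re-deriving the cited theorem. In exchange, you reuse a result already in the paper and make explicit that weak solutions of \eqref{eqn:weak-eqn} with $h=0$ are weak solutions of \eqref{eqn:arc-pde}.

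The two points you flag are not obstacles. First, $\phi\equiv1\in H^1(S)$. Second, the proof of Proposition~\ref{unique-pde} already works at the level $H^1_{loc}(\R^2)\times H^1(S)$, with $\jump{\partial_{\bm n}u}\in H^{-\frac12}_{\overline S}(\Gamma)$ and \eqref{eqn:arc-pde-c}--\eqref{eqn:arc-pde-d} read weakly. That is exactly what your $\phi$-equation supplies.

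Your alternative, as phrased, needs one more line. Nonnegativity of $\dual{V_S\psi}{\psi}_{L^2(S)}$ alone gives $\grad_\Gamma U=0$. The $\phi$-equation then yields $\dual{\psi}{\phi}_{L^2(S)}=0$ for all $\phi\in H^1(S)$, so $\psi=0$ in $H^{-1}_{\overline S}(\Gamma)\simeq (H^1(S))'$. Thus strict definiteness is not even needed.
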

\begin{proof}
    Define the operator $\mc B $ as
    \begin{equation}\label{eqn:def-B}
    \dual{\mc B\bm u}{\bm v}_{L^2(S)} = \inprod{U}{\phi}_{L^2(S)} - \dual{V_S\psi}{\zeta}_{L^2(S)} , \quad \forall \bm u, \bm v \in L^2(S) \times H^{-\frac{1}{2}}_{\overline{S}}(\Gamma).
    \end{equation}
    
    For any $\varepsilon\in(0,\frac{\delta}{2})$, by Lemma~\ref{lem:vs-decomp}, we have,  
    \begin{equation}
    \begin{aligned}
        &\norm{\mc B \bm u}_{H^{-1+\varepsilon}_{\overline{S}}(\Gamma)\times H^{\frac{1}{2}+\varepsilon}(S)} \\
        &= \sup_{(\phi,\zeta)\neq 0} \frac{|\inprod{U}{\phi}_{L^2(S)} - \dual{V_{\rev{S}}\psi}{\zeta}_{L^2(S)}|}{\norm{\phi}_{H^{1-\varepsilon}(S)} + \norm{\zeta}_{H^{-\frac{1}{2}-\varepsilon}_{\overline{S}}(\Gamma)}} \\
        &\le \sup_{(\phi,\zeta)\neq 0}\frac{\norm{U}_{H^{-1+\varepsilon}_{\overline{S}}(\Gamma)} \norm{\phi}_{H^{1-\varepsilon}(S)} + \norm{V_{\rev{S}}\psi}_{H^{\frac{1}{2}+\varepsilon}(S)} \norm{\zeta}_{H^{-\frac{1}{2}\rev{-}\varepsilon}_{\overline{S}}(\Gamma)}}{\norm{\phi}_{H^{1-\varepsilon}(S)} + \norm{\zeta}_{H^{-\frac{1}{2}-\varepsilon}_{\overline{S}}(\Gamma)}} \\
        &\le \norm{U}_{H^{-1+\varepsilon}_{\overline{S}}(\Gamma)} + \norm{V_S\psi}_{H^{\frac{1}{2}+\varepsilon}(S)} \le \rev{C}\norm{U}_{L^2(S)} + \rev{C}\norm{\psi}_{H^{-\frac{1}{2}-\varepsilon}_{\overline{S}}(\Gamma)}\\ 
        &\le \rev{C}\norm{U}_{H^1 (S)} + \rev{C}\norm{\psi}_{H^{-\frac{1}{2}}_{\overline{S}}(\Gamma)} =  \rev{C}\norm{\bm u}_{\mathbb H}.        
    \end{aligned}
    \end{equation}
    This implies $\mc B:\mathbb H\to \mathbb H'$ is compact since $H^s_{\overline{S}}(\Gamma)\hookrightarrow \hookrightarrow H^t_{\overline{S}}(\Gamma)$ for $s > t$.
    The G\aa rding inequality~\eqref{eqn:garding} follows directly,
    \begin{equation}
    \begin{aligned}
        \dual{\mc A\bm u}{\bm u}_{L^2(S)} &= \norm{\grad_\Gamma U}_{L^2(S)}^2 + \dual{V_S\psi}{\psi}_{L^2(S)} \\
        & \ge \norm{U}_{H^1(S)}^2 - \norm{U}_{L^2(S)}^2+ C \norm{\psi}_{H^{-\frac{1}{2}}_{\overline{S}}(\Gamma)}^2  + \dual{V_{S, 1}\psi}{\psi}_{L^2(S)} \\
        & \ge C \norm{\bm u}_{\mathbb H}\rev{^2} - \dual{\mc B\bm u}{\bm u}_{L^2(S)}, \quad \forall \bm u = (U, \psi)\in\mathbb H.
    \end{aligned}
    \end{equation}
    Hence, $\mc A + \mc B:\mathbb H\to \mathbb H'$ is strictly coercive and $\mc A$ is a Fredholm operator with index zero.
    Therefore, the solvability of the problem~\eqref{eqn:weak-eqn} is equivalent to the uniqueness.
    In order to show uniqueness, we consider the homogeneous problem for $\bm u\in\mathbb H$,
    \begin{equation}
        a(\bm u, \bm v) = 0, \quad \bm v\in\mathbb H.
    \end{equation}
    Taking $\bm v = (1, 0)$, we obtain
    \begin{equation}
        \dual{1}{\psi}_{L^2(S)} = \dual{1}{\psi}_{L^2(\Gamma)} = 0.
    \end{equation}
   Taking $\bm v = \bm u$, we obtain
    \begin{equation}
        \norm{\grad_\Gamma U}_{L^2(S)}^2 + \dual{V_S \psi}{\psi}_{L^2(S)} = \norm{\grad_\Gamma U}_{L^2(S)}^2 + \dual{V_\Gamma \psi}{\psi}_{L^2(\Gamma)} = 0.
    \end{equation}
    Define $H^{-\frac{1}{2}}_0(\Gamma)$ as
    \begin{equation}
         H^{-\frac{1}{2}}_0(\Gamma) = \{\psi\in H^{-\frac{1}{2}}(\Gamma): \dual{1}{\psi}_{L^2(\Gamma)} = 0\}.
    \end{equation}
    By \cite[Theorem 8.12]{McLean2000}, $V_\Gamma$ is strictly positive-definite on $H^{-\frac{1}{2}}_0(\Gamma)$, that is,
    \begin{equation}
        \dual{V_\Gamma \psi}{\psi}_{L^2(\Gamma)} > 0 \quad \forall \psi \in H^{-\frac{1}{2}}_0(\Gamma)\setminus\{0\}.
    \end{equation}
    So we have $\psi = 0$. Let $\phi = 0$. We have $\dual{U}{\zeta} = 0, \forall \zeta\in H^{-\frac{1}{2}}_{\overline{S}}(\Gamma)$ and so $U = 0$. By the Fredholm alternative, $\mc A$ is bijective and has bounded inverse. By Lemma~\ref{lem:ab-conti}, it holds that
    \begin{equation}
    \begin{aligned}
        \norm{\bm u}_{\mathbb H} &\le C \norm{\mc A\bm u}_{\mathbb H'}  = C \sup_{0\neq\bm v\in\mathbb H}\frac{a(\bm u,\bm v)}{\norm{\bm v}_{\mathbb H}} = C \sup_{0\neq\bm v\in\mathbb H}\frac{b(\bm v)}{\norm{\bm v}_{\mathbb H}} \\
        &\le C \paren{\norm{f}_{H^{-1}_{\overline{S}}(\Gamma)} +\norm{h}_{H^\frac{1}{2}(S)}+ \norm{g}_{\partial S}}.
    \end{aligned}
    \end{equation}
    This completes the proof.
\end{proof}

\section{Regularity of solutions}\label{sec:reg}


\subsection{Half-line case}
We discuss the case in which the boundary is given by $S = \R_+ \times \{0\}$, where $\R_+ = (0, +\infty)$ is the half-line.
Let $A:H^{s}(\R)\to H^{s+1}(\R)$ and $B:H^{s}(\R)\to H^{s-2}(\R)$ be pseudo-differential operators with symbols $\wh A(\xi) = (1 + \xi^2)^{-\frac{1}{2}}$ and $\wh B(\xi) = 1 + \xi^2$, respectively. In fact, $B$ is a differential operator $   \mathcal{I}- \partial_{ss}$.
We consider the pseudo-differential equations on the half-line,
\begin{equation}\label{eqn:pdo-A}
    p_+ A \psi_+ = f_1, \quad \text{ on }\R_+,
\end{equation}
and,
\begin{equation}\label{eqn:pdo-B}
    B v = f_2, \quad \text{ on }\R_+, \quad \partial_x v(0) = g,
\end{equation}
where $p_+$ is the restriction to the half-line $\R_+$. Here, $\psi_+ = \psi(x)$ for $x > 0$ and $\psi_+(x) = 0$ for $x < 0$. The operator $A$ represents the leading-order behavior of the single-layer integral operator and has a suitable form for applying the Wiener--Hopf technique \cite{eskin1981boundary}.

\begin{lemma}\label{lem:A_global}
    Let $f_1 \in H^{\frac{1}{2} + \varepsilon}(\R_+)$ be given with $|\varepsilon| < \frac{1}{2}$. The solution satisfies $\psi_+ \in H^{-\frac{1}{2}+\varepsilon}_{\overline{\R_+}}(\R)$, and there exists a constant $C>0$ such that
    \begin{equation}
        \norm{\psi_+}_{H^{-\frac{1}{2}+\varepsilon}(\R)} \le C \norm{f_1}_{H^{\frac{1}{2}+\varepsilon}(\R_+) }.
    \end{equation}
\end{lemma}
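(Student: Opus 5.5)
We prove the lemma with the Wiener--Hopf (Eskin) factorization of the symbol $\wh A(\xi)=(1+\xi^2)^{-\frac12}$. Write
\[
(1+\xi^2)^{-\frac12}=(\xi-i)^{-\frac12}(\xi+i)^{-\frac12}=:\wh A_-(\xi)\,\wh A_+(\xi),
\]
where $\wh A_\pm(\xi)=(\xi\pm i)^{-\frac12}$, with branches chosen so that $\wh A_+$ extends holomorphically to the lower half-plane and $\wh A_-$ to the upper half-plane. With the convention $\mc F\vph(\xi)=\int\vph e^{-ix\xi}dx$, a function supported in $\overline{\R_+}$ has Fourier transform holomorphic in $\operatorname{Im}\xi<0$. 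The factorization index is $\kappa=-\frac12$ in Eskin's sense.

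The key facts are Paley--Wiener properties of the factors. For any $t$, the operator $\Lambda_+^{t}$ with symbol $(\xi+i)^{t}$ maps $H^{s}_{\overline{\R_+}}(\R)$ isomorphically onto $H^{s-t}_{\overline{\R_+}}(\R)$; this follows from Paley--Wiener together with $|(\xi+ i)^t|=(1+\xi^2)^{t/2}$. The operator $\Lambda_-^{t}$ with symbol $(\xi-i)^t$ preserves support in $\overline{\R_-}$. Consequently $p_+\Lambda_-^{t}$ is well defined on $H^{s}(\R_+)$: if two extensions differ by an element supported in $\overline{\R_-}$, their images under $\Lambda_-^t$ still differ only on $\R_-$. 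It is bounded $H^s(\R_+)\to H^{s-t}(\R_+)$, with inverse $p_+\Lambda_-^{-t}$.

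With these facts the equation is solved explicitly. Since $A=\Lambda_-^{-1/2}\Lambda_+^{-1/2}$, the equation $p_+A\psi_+=f_1$ becomes $p_+\Lambda_-^{-1/2}(\Lambda_+^{-1/2}\psi_+)=f_1$. Set $w_+=\Lambda_+^{-1/2}\psi_+\in H^{\varepsilon}_{\overline{\R_+}}(\R)$. Applying $p_+\Lambda_-^{1/2}$ to both sides gives
\[
p_+w_+=p_+\Lambda_-^{1/2}f_1=:F\in H^{\varepsilon}(\R_+).
\]
We then need $w_+\in H^\varepsilon_{\overline{\R_+}}$ with prescribed restriction $F$. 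For $|\varepsilon|<\frac12$ this is exactly where the restriction enters: extension by zero $F\mapsto F_+$ is an isomorphism $H^{\varepsilon}(\R_+)\to H^\varepsilon_{\overline{\R_+}}(\R)$, equivalently $H^\varepsilon_{\overline{\R_+}}=\wt H^\varepsilon$ with equivalent norms. The standard proof uses boundedness of multiplication by the Heaviside function $\chi_+$ on $H^\varepsilon(\R)$ for $|\varepsilon|<\frac12$, e.g.\ via the Hilbert transform being bounded on the weighted $L^2$ space $|\xi|^{2\varepsilon}$, or via Hardy's inequality; we cite \cite{eskin1981boundary} or \cite{McLean2000}. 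This also gives uniqueness: if $p_+w_+=0$ and $w_+$ is supported in $\overline{\R_+}$, then $w_+$ is supported at $\{0\}$, hence is a finite sum of derivatives of $\delta$, none of which lie in $H^\varepsilon$ for $\varepsilon>-\frac12$. Hence $w_+=\chi_+\wt F$ for any extension $\wt F$, and
\[
\psi_+=\Lambda_+^{1/2}\bigl(\chi_+\,\Lambda_-^{1/2}\wt f_1\bigr),
\]
where $\wt f_1$ is a bounded extension of $f_1$.

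The bound follows by composition:
\[
\norm{\psi_+}_{H^{-\frac12+\varepsilon}}\le C\norm{w_+}_{H^\varepsilon}\le C\norm{F}_{H^\varepsilon(\R_+)}\le C\norm{f_1}_{H^{\frac12+\varepsilon}(\R_+)}.
\]
The main obstacle is the step $|\varepsilon|<\frac12$, namely the continuity of $\chi_+$ on $H^\varepsilon$. The other step needing care is that $p_+\Lambda_-^{1/2}$ is well defined on quotient spaces, which is precisely the Paley--Wiener support argument.
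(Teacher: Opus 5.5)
Your argument is correct and is the paper's Wiener--Hopf proof written in physical-space language. The paper also factors $\wh A(\xi)=(\xi+i)^{-\frac12}(\xi-i)^{-\frac12}$ and uses that the Heaviside projection $\Pi_+$ (multiplication by $\chi_{[0,\infty)}$) is bounded on $\mathcal F(H^{\varepsilon}(\R))$ for $|\varepsilon|<\frac12$. It arrives at $\wh{\psi_+}=\wh A_+^{-1}\Pi_+[\wh A_-^{-1}\wh{\wt f_1}]$, which is exactly your $\psi_+=\Lambda_+^{1/2}(\chi_+\Lambda_-^{1/2}\wt f_1)$; your explicit existence check and the uniqueness argument via distributions supported at $\{0\}$ are a small addition the paper leaves implicit.

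One slip needs fixing. With your stated convention $\mathcal F\vph(\xi)=\int\vph(x)e^{-ix\xi}\,dx$, the branch point of $(\xi+i)^{t}$ is $-i$, which lies in the lower half-plane. So it is $(\xi-i)^{t}$, not $(\xi+i)^t$, that preserves support in $\overline{\R_+}$. Either swap the labels $\pm$, or use Eskin's $e^{ix\xi}$ convention, which the paper's own computations implicitly follow despite the stated definition. Since the symbol is symmetric under this swap, nothing else in your argument changes.
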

\begin{proof}
Define symbols $\wh A_\pm(\xi) = (\xi \pm i)^{-\frac{1}{2}}$. Then, $\wh A_+ (\xi + i \tau)$ is analytic in the closed upper half plane where $\tau > 0$ and $\wh A_- (\xi - i \tau)$ is analytic in the closed lower half plane. We factor $\wh A (\xi)$ as
\begin{equation}
    \wh A (\xi) = (\xi + i)^{-\frac{1}{2}} (\xi - i)^{-\frac{1}{2}} = \wh A_+ (\xi) \wh A_- (\xi).
\end{equation}
Let $\wt{f}_1 \in H^{\frac{1}{2}+\varepsilon}(\R)$ be an arbitrary extension of $f_1$ satisfying
\begin{equation}\label{eqn:f1-extension-bound}
    \norm{\wt{f}_1}_{H^{\frac{1}{2}+\varepsilon}(\R)} \le 2 \norm{f_1}_{H^{\frac{1}{2}+\varepsilon}(\R_+)},
\end{equation}
and let
\begin{equation}\label{eqn:psi-minus-def}
    \psi_- = \wt{f}_1 - A \psi_+ \in H^{\frac{1}{2}+\varepsilon}_{\overline{\R_-}}(\R).
\end{equation}
\rev{Using the definition in~\eqref{eqn:psi-minus-def},~\eqref{eqn:pdo-A} can be written as}
\begin{equation}\label{eqn:wh-a-split}
    \wh A_+ (\xi) \wh A_- (\xi)\wh {\psi_+} (\xi)+ \wh{\psi_-} (\xi)= \wh{\wt{f}_1}(\xi).
\end{equation}
\rev{Multiplying~\eqref{eqn:wh-a-split} by $\wh A_- ^{-1}(\xi)$ gives}
\begin{equation}\label{eqn:apam}
    \wh A_+ (\xi)\wh {\psi_+} (\xi)+  \wh A_- ^{-1}(\xi)\wh{\psi_-}(\xi) =  \wh A_- ^{-1}(\xi)\wh{\wt{f}_1}(\xi),
\end{equation}
where each term belongs to $\mathcal{F}(H^{\varepsilon}(\R))$. 
Furthermore, by the Paley\rev{--}Wiener theorem \rev{for distributions supported on a half-line \cite[Theorem~4.5]{eskin1981boundary}}, we have
\begin{equation}
    \wh A_\pm \wh {\psi_\pm} \in  \mathcal{F}(H^{\varepsilon}_{\overline{\R_\pm}}(\R)).
\end{equation}
For any $\vph \in H^s(\R)$ with $s\in\R$, the operator $\Pi_+(\wh \vph) = \mathcal{F}(\chi_{[0,\infty)}\vph)$ can be expressed as
\begin{equation}
    \Pi_\pm (\wh \vph)(\xi) = \frac{\wh \vph(\xi)}{2} \pm \Pi_0 (\wh \vph)(\xi),\quad\Pi_0(\wh \vph)(\xi) = \frac{i}{2\pi}\text{p.v.}\int_\R  \frac{\wh \vph(\eta)d\eta}{\xi - \eta}.
\end{equation}
Since $\mathcal{F}(H^s(\R)) = L^2(\R, (1+|\xi|^2)^s d\xi)$, it can be shown that $\Pi_0: \mathcal{F}(H^s(\R))\to\mathcal{F}(H^s(\R))$ is bounded for $|s| < \frac{1}{2}$.
\rev{Hence, we have}
\begin{equation}\label{eqn:pi-plus-support}
    \Pi_+[\wh A_- \wh{\psi_-}](\xi) = 0, \quad \Pi_+[\wh A_+ \wh{\psi_+}](\xi) = \wh A_+ (\xi)\wh{\psi_+}(\xi)\in H^{\varepsilon}_{\overline{\R_+}} (\R),
\end{equation}
\rev{Using~\eqref{eqn:pi-plus-support} and applying $\Pi_+$ to~\eqref{eqn:apam}, we have}
\begin{equation}\label{eqn:a-plus-psi-pi}
    \wh A_+ (\xi)\wh{\psi_+}(\xi) = \Pi_+[\wh A_- ^{-1}  \wh{\wt{f}_1}] (\xi),
\end{equation}
\rev{and hence, from~\eqref{eqn:a-plus-psi-pi},}
\begin{equation}\label{eqn:psi-plus-fourier-form}
    \wh{\psi_+}(\xi) = \wh A_+ ^{-1}(\xi)\Pi_+[\wh A_- ^{-1}  \wh{\wt{f}_1}] (\xi) \in \mathcal{F}(H^{-\frac{1}{2}+\varepsilon}_{\overline{\R_+}}(\R)).
\end{equation}
Thus, $\psi_+ \in H^{-\frac{1}{2}+\varepsilon}_{\overline{\R_+}}(\R)$ and
\rev{the estimate follows from~\eqref{eqn:psi-plus-fourier-form}, the multiplier bounds for \(\wh A_+^{-1}\) and \(\wh A_-^{-1}\), the boundedness of \(\Pi_+\) on \(\mathcal F(H^\varepsilon(\mathbb R))\) for \(|\varepsilon|<\frac12\), and the extension bound in~\eqref{eqn:f1-extension-bound}:}
\begin{equation}
    \norm{\psi_+}_{H^{-\frac{1}{2}+\varepsilon}(\R)} \le C \norm{\wt{f}_1}_{H^{\frac{1}{2}+\varepsilon}(\R)} \le C \norm{f_1}_{H^{\frac{1}{2}+\varepsilon}(\R_+)}.
\end{equation}
\end{proof}

For more regular data, we can derive the singular behavior of the solution near the boundary based on the Wiener-Hopf technique. 
\begin{lemma}\label{lem:psi_singular}
    Let $f_1 \in H^{\frac{3}{2} + \varepsilon}(\R_+)$ be given with $|\varepsilon| < \frac{1}{2}$. The solution $\psi_+$ has the form
    \begin{equation}\label{eqn:psi-halfline-singular}
        \psi_+ (x) = c \chi_{[0,\infty)} (x) e^{-x} x^{-\frac{1}{2}} + \psi_r(x), \quad \psi_r \in H^{\frac{1}{2}+\varepsilon}_{\overline{\R_+}}(\R),
    \end{equation}
    where $c$ is a constant depending on $f_1$.
\end{lemma}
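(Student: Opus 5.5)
We start from the Wiener--Hopf representation obtained in the proof of Lemma~\ref{lem:A_global},
\begin{equation*}
  \wh{\psi_+}(\xi) = \wh A_+^{-1}(\xi)\,\Pi_+\big[\wh A_-^{-1}\wh{\wt f_1}\big](\xi),
\end{equation*}
where now $\wt f_1\in H^{\frac32+\varepsilon}(\R)$ is a bounded extension of $f_1$. Set $w=\mathcal F^{-1}[\wh A_-^{-1}\wh{\wt f_1}]$. Since $\wh A_-^{-1}(\xi)=(\xi-i)^{\frac12}$ is an order $\frac12$ multiplier, $w\in H^{1+\varepsilon}(\R)$. The difficulty is that $\Pi_+$ (multiplication by $\chi_{[0,\infty)}$) is bounded on $H^{s}$ only for $|s|<\frac12$, and here $1+\varepsilon\in(\frac12,\frac32)$. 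So the plan is to subtract off the boundary value of $w$ at $0$ before cutting off.

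The key step is to write $w = w(0)\,e^{-|x|}+w_0$. Here $w(0)$ is well defined because $H^{1+\varepsilon}\hookrightarrow C^0$ when $1+\varepsilon>\frac12$. The remainder $w_0\in H^{1+\varepsilon}(\R)$ satisfies $w_0(0)=0$. For $\frac12<1+\varepsilon<\frac32$, the standard fact is that functions vanishing at $0$ satisfy $\chi_{[0,\infty)}w_0\in H^{1+\varepsilon}_{\overline{\R_+}}(\R)$, with norm controlled by $\norm{w_0}_{H^{1+\varepsilon}}$. One can check this via the Hardy inequality, or by noting $\chi_+ w_0$ equals the extension by zero of an element of $\tilde H^{1+\varepsilon}(\R_+)$; this is the same Paley--Wiener framework as \cite[Theorem~4.5]{eskin1981boundary}. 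The case $1+\varepsilon=1$ has no exceptional index issue since $|\varepsilon|<\frac12$. Hence
\begin{equation*}
\Pi_+[\wh A_-^{-1}\wh{\wt f_1}] = w(0)\,\mathcal F[\chi_{[0,\infty)}e^{-x}] + \mathcal F[\chi_+ w_0] = \frac{w(0)}{i(\xi - i)}\cdot(-1)\cdot\ldots + \mathcal F(H^{1+\varepsilon}_{\overline{\R_+}}).
\end{equation*}
Explicitly, $\mathcal F[\chi_{[0,\infty)}e^{-x}](\xi)=(1+i\xi)^{-1}=-i(\xi-i)^{-1}$.

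Next we apply $\wh A_+^{-1}(\xi)=(\xi+i)^{\frac12}$. On the remainder, $(\xi+i)^{\frac12}$ is a "plus" symbol of order $\frac12$. Such symbols preserve support in $\overline{\R_+}$ by Paley--Wiener, so they map $H^{1+\varepsilon}_{\overline{\R_+}}$ to $H^{\frac12+\varepsilon}_{\overline{\R_+}}$. This remainder contributes to $\psi_r$. The explicit term is $-i\,w(0)(\xi+i)^{\frac12}(\xi-i)^{-1}$. We rewrite it as
\begin{equation*}
(\xi+i)^{\frac12}(\xi-i)^{-1} = (\xi+i)^{-\frac12} + (\xi+i)^{-\frac12}\frac{2i}{\xi-i}.
\end{equation*}
The first piece is exactly $\mathcal F$ of a multiple of $\chi_{[0,\infty)}x^{-\frac12}e^{-x}$, using the standard Gamma-function transform $\int_0^\infty x^{-\frac12}e^{-x}e^{-ix\xi}dx=\Gamma(\tfrac12)(1+i\xi)^{-\frac12}$. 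This gives the singular term with $c$ proportional to $w(0)$, which depends linearly on $f_1$.

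The second piece is handled by rewriting $(\xi-i)^{-1}$ as the transform of $\chi_{[0,\infty)}e^{-x}$ once more. Then $(\xi+i)^{-\frac12}(\xi-i)^{-1}$ is a product of plus-symbols: it equals the transform of a convolution $(\chi_+x^{-\frac12}e^{-x})*(\chi_+e^{-x})$, which is supported in $\overline{\R_+}$. That convolution behaves like $x^{\frac12}$ near $0$ and decays exponentially, so it lies in $H^{s}$ for every $s<1$. In particular it lies in $H^{\frac12+\varepsilon}$, since $\frac12+\varepsilon<1$, and it joins $\psi_r$.

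I expect the main obstacle to be the cut-off step: justifying that $\chi_+w_0\in H^{1+\varepsilon}$ once $w_0(0)=0$, uniformly over the range $1+\varepsilon\in(\frac12,\frac32)$. If a direct citation is not available, I would prove it on the Fourier side. One writes $\chi_+w_0$ as the limit of the Cauchy-type projection and uses $w_0(0)=\frac1{2\pi}\int \wh w_0=0$ to gain one order in the principal-value integral, reducing to the bounded case $|s-1|<\frac12$ for $\Pi_0$ applied to $\wh{w_0}/(\xi-i)$-type quantities.
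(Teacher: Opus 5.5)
Your strategy is the paper's own, written in physical space. The $n=1$ case of Eskin's expansion, $\Pi_+[\wh w]=\wh\Lambda_+^{-1}\Pi'[\wh w]+\wh\Lambda_+^{-1}\Pi_+[\wh\Lambda_+\wh w]$ with $\Pi'[\wh w]=\frac{i}{2\pi}\int_\R\wh w=i\,w(0)$, is exactly your splitting $\chi_{[0,\infty)}w=w(0)\chi_{[0,\infty)}e^{-x}+\chi_{[0,\infty)}w_0$.

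As written, however, the proposal has a genuine error: it mixes two Fourier conventions. You compute transforms with $\mathcal F\vph(\xi)=\int\vph(x)e^{-ix\xi}\,dx$. In that convention a distribution supported in $\overline{\R_+}$ has a transform holomorphic in $\{\operatorname{Im}\xi<0\}$; for instance $\mathcal F[\chi_{[0,\infty)}x^{-1/2}e^{-x}]=\Gamma(\frac{1}{2})(1+i\xi)^{-1/2}=\Gamma(\frac{1}{2})e^{-i\pi/4}(\xi-i)^{-1/2}$. So $(\xi+i)^{\pm 1/2}$ are \emph{minus} symbols there. Yet you treat $(\xi+i)^{1/2}$ as support-preserving. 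You also identify $(\xi+i)^{-1/2}$ with $\mathcal F[\chi_{[0,\infty)}x^{-1/2}e^{-x}]$, citing a Gamma integral that actually produces $(\xi-i)^{-1/2}$.

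The step that genuinely fails is the leftover piece. $(\xi+i)^{-1/2}(\xi-i)^{-1}$ is not a product of plus symbols in \emph{any} convention: one factor is holomorphic in the upper half-plane and the other in the lower. Its inverse transform is the convolution of a function supported in $\overline{\R_+}$ with one supported in $\overline{\R_-}$. It is therefore not supported in $\overline{\R_+}$, and your argument does not place it in $H^{\frac{1}{2}+\varepsilon}_{\overline{\R_+}}(\R)$.

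The repair is to work throughout in Eskin's convention $\mathcal F u(\xi)=\int u(x)e^{ix\xi}\,dx$. That is the convention in which $\wh A_+=(\xi+i)^{-1/2}$ is the plus factor, $\wh{\psi_+}=\wh A_+^{-1}\Pi_+[\wh A_-^{-1}\wh{\wt{f}_1}]$ is the Wiener--Hopf solution, and the paper's evaluation of $\mathcal F^{-1}[(\xi+i)^{-k+\frac{1}{2}}]$ is carried out. There $\mathcal F[\chi_{[0,\infty)}e^{-x}]=i(\xi+i)^{-1}$, so the explicit term is $i\,w(0)(\xi+i)^{1/2}(\xi+i)^{-1}=i\,w(0)(\xi+i)^{-1/2}$. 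This is already a multiple of $\mathcal F[\chi_{[0,\infty)}x^{-1/2}e^{-x}]$. The splitting of $(\xi+i)^{1/2}(\xi-i)^{-1}$ and the spurious second piece never arise. The remainder $(\xi+i)^{1/2}\mathcal F[\chi_{[0,\infty)}w_0]$ then lies in $\mathcal F(H^{\frac{1}{2}+\varepsilon}_{\overline{\R_+}}(\R))$, as you say.

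The identification with Eskin's remainder also settles what you called the main obstacle, with no Hardy inequality. Since $w_0(0)=0$, no Dirac mass appears, and $(\partial_x+1)(\chi_{[0,\infty)}w_0)=\chi_{[0,\infty)}(w'+w)$. Hence $\chi_{[0,\infty)}w_0=(\chi_{[0,\infty)}e^{-x})*\bigl[\chi_{[0,\infty)}(w'+w)\bigr]$, since the only solutions of $(\partial_x+1)v=0$ are multiples of $e^{-x}$, none supported in $\overline{\R_+}$ except $0$. Now $w'+w\in H^{\varepsilon}(\R)$ with $|\varepsilon|<\frac{1}{2}$, where the cut-off is bounded. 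Convolution with $\chi_{[0,\infty)}e^{-x}$ has symbol of modulus $(1+\xi^2)^{-1/2}$ and preserves support, so it maps $H^{\varepsilon}_{\overline{\R_+}}(\R)$ into $H^{1+\varepsilon}_{\overline{\R_+}}(\R)$. This is precisely how the paper controls its remainder $(\xi+i)^{-1/2}\Pi_+[(\xi+i)\wh A_-^{-1}\wh{\wt{f}_1}]$.
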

\begin{proof}
We will first consider the general case for $f_1\in H^{n + \frac{1}{2} + \varepsilon}(\R_+)$ where $n \in \mathbb N_+$.
\rev{Recall from~\eqref{eqn:psi-plus-fourier-form} that}
\begin{equation}
    \wh{\psi_+ }(\xi) = \wh A_+ ^{-1}(\xi)\Pi_+[\wh A_- ^{-1}  \wh{\wt{f}_1}] (\xi).
\end{equation}
Since $\Pi_+: \mathcal{F}(H^s_{\overline{\R_+}}(\R))\to\mathcal{F}(H^s_{\overline{\R_+}}(\R))$ is only bounded for $|s|<\frac{1}{2}$, we need to treat $\wh A_- ^{-1}  \wh{\wt{f}_1} \in \mathcal{F}(H^{n + \varepsilon}_{\overline{\R_+}}(\R)) \subset \mathcal{F}(H^{\varepsilon}_{\overline{\R_+}}(\R))$ and conclude that $\psi_+ \in H^{-\frac{1}{2}+\varepsilon}_{\overline{\R_+}}(\R)$. 
We will apply the expansion formula for the operator $\Pi_+$ \cite[Lemma 5.5]{eskin1981boundary} to extract the singular terms.
Define the symbols $\wh \Lambda_\pm (\xi) = \xi \pm i $.
We can expand $\Pi_+[\wh A_- ^{-1}  \wh{\wt{f}_1}] (\xi)$ such that
\begin{equation}\label{eqn:hat_psi_expan}
     \wh{\psi_+ }(\xi) = \sum_{k=1}^n \wh A_+ ^{-1}(\xi) \wh {\Lambda}_+^{-k}(\xi)\Pi'[\wh {\Lambda}_+^{k-1}\wh A_- ^{-1}  \wh{\wt{f}_1}]  + \wh A_+ ^{-1}(\xi)\wh {\Lambda}_+^{-n}(\xi)\Pi_+[\wh {\Lambda}_+^{n}\wh A_- ^{-1}  \wh{\wt{f}_1}] (\xi),
\end{equation}
where $\Pi'$ is defined by
\begin{equation}
    \Pi'[\wh h] =  \frac{i}{2\pi}\int_\R  \wh h(\eta) \,d\eta.
\end{equation}
\rev{The remainder term satisfies}
\begin{equation}\label{eqn:psi-hat-remainder-reg}
    \wh A_+ ^{-1}(\xi)\wh {\Lambda}_+^{-n}(\xi)\Pi_+[\wh {\Lambda}_+^{n}\wh A_- ^{-1}  \wh{\wt{f}_1}] (\xi) \in \mathcal{F}(H^{n-\frac{1}{2}+\varepsilon}_{\overline{\R_+}}(\R)),
\end{equation}
\rev{which has optimal regularity. The remaining terms}
\begin{equation}\label{eqn:psi-hat-singular-factors}
    \wh A_+ ^{-1}(\xi) \wh {\Lambda}_+^{-k}(\xi) = (\xi + i)^{-k+\frac{1}{2}}\in \mathcal{F}(H^{k-\frac{3}{2} + \varepsilon}_{\overline{\R_+}}(\R)), \quad k=1,2,\cdots, n,
\end{equation}
\rev{are the more singular terms and can be computed explicitly:}
\begin{equation}\label{eqn:psi-singular-basis-ft}
    \mathcal{F}^{-1}( \wh A_+ ^{-1}(\xi) \wh {\Lambda}_+^{-k}(\xi)) (x)  =  \frac{1}{2\pi}\int_\R  (\xi+i)^{-k+\frac{1}{2}} e^{-i x \xi}\,d\xi = \frac{e^{(1/4-k/2)i\pi}}{\Gamma(k-\frac{1}{2})}\chi_{[0,\infty)}(x)e^{-x} x^{k-\frac{3}{2}}.
\end{equation}
\rev{Using~\eqref{eqn:hat_psi_expan} and~\eqref{eqn:psi-singular-basis-ft}, the solution can be expressed as}
\begin{equation}\label{eqn:hl-psi-ep}
    \psi_+ (x) = \sum_{k=1}^n c_k \psi_k(x) + \psi_r(x), 
\end{equation}
where
\begin{equation}\label{eqn:psi-r-halfline}
    \psi_r = \mathcal{F}^{-1}(\wh A_+ ^{-1}(\xi)\wh {\Lambda}_+^{-n}(\xi)\Pi_+[\wh {\Lambda}_+^{n}\wh A_- ^{-1}  \wh{\wt{f}_1}]) \in H^{n-\frac{1}{2}+\varepsilon}_{\overline{\R_+}}(\R),
\end{equation}
\rev{is the most regular term by~\eqref{eqn:psi-hat-remainder-reg}}, and $\psi_k$ are given by
\begin{equation}
     \psi_k(x) = \chi_{[0,\infty)}(x)e^{-x} x^{k-\frac{3}{2}},\quad k = 1, 2, \cdots, n.
\end{equation}
The coefficients satisfy the bound
\begin{equation}
    |c_k| \le  M_{k, a} \norm{f_1}_{H^{n+\frac{1}{2}+\varepsilon-a}(\R)} , \quad 0 \le a < n+\frac{1}{2}+\varepsilon-k.
\end{equation}
\rev{Taking \(n=1\) in~\eqref{eqn:hl-psi-ep} gives~\eqref{eqn:psi-halfline-singular}, and proves the lemma.}
\end{proof}

\begin{lemma}\label{lem:B_regular_op}
Let $f_2 \in H^{s}(\R_+)$ with $s > -\frac12$. Then there exists a unique solution $v \in H^{s+2}(\R_+)$ to the boundary value problem \eqref{eqn:pdo-B}. Furthermore, there exists a constant $C > 0$ such that
    \begin{equation}
        \norm{v}_{H^{s+2}(\R_+)} \le C \paren{ \norm{f_2}_{H^{s}(\R_+)} + |g|}.
    \end{equation}
\end{lemma}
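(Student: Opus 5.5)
We reduce the Neumann problem for $B=\mathcal I-\partial_{xx}$ on $\R_+$ to a whole-line problem plus an explicit one-dimensional correction. The condition $s>-\frac12$ is exactly what makes the zero extension $(f_2)_+$ belong to $H^{s}(\R)$ when $s<\frac12$, and what makes extension available in general.

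\emph{Step 1 (particular solution).} We extend $f_2$ to $\wt f_2\in H^s(\R)$ with $\norm{\wt f_2}_{H^s(\R)}\le 2\norm{f_2}_{H^s(\R_+)}$, using the quotient-norm definition of $H^s(\R_+)$. We then set $w=\mathcal F^{-1}\big((1+\xi^2)^{-1}\wh{\wt f_2}\big)\in H^{s+2}(\R)$. Since $\wh B=1+\xi^2$, this gives $Bw=\wt f_2$ on $\R$, hence $p_+Bw=f_2$, with $\norm{w}_{H^{s+2}(\R)}\le C\norm{f_2}_{H^s(\R_+)}$.

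Because $s>-\frac12$, we have $s+2>\frac32$. Hence $\partial_x w\in H^{s+1}(\R)$ with $s+1>\frac12$, and the trace theorem on $\R$ (Sobolev embedding into continuous functions) gives
\begin{equation*}
|\partial_x w(0)|\le C\norm{w}_{H^{s+2}(\R)}.
\end{equation*}
This trace step is the only place the lower bound on $s$ is used, and it is the one genuinely delicate point.

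\emph{Step 2 (boundary correction).} We define $v=p_+w+\beta e^{-x}$ with $\beta=-(g-\partial_x w(0))$. Since $(\mathcal I-\partial_{xx})e^{-x}=0$ and $\partial_x e^{-x}|_{x=0}=-1$, this gives $Bv=f_2$ on $\R_+$ and $\partial_x v(0)=g$. Moreover $e^{-x}$ restricted to $\R_+$ lies in every $H^t(\R_+)$; one sees this by extending it as $e^{-|x|}$ multiplied by a smooth cutoff, or as a smooth decaying function. Combining with Step 1,
\begin{equation*}
\norm{v}_{H^{s+2}(\R_+)}\le \norm{w}_{H^{s+2}(\R)}+C|\beta|\le C\big(\norm{f_2}_{H^s(\R_+)}+|g|\big).
\end{equation*}

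\emph{Step 3 (uniqueness).} It suffices to show that a solution $v\in H^{s+2}(\R_+)$ of the homogeneous problem ($f_2=0$, $g=0$) vanishes. Such a $v$ satisfies $v''=v$ in the distributional sense on $\R_+$, so $v=ae^{x}+be^{-x}$. Since $v\in L^2(\R_+)$, we must have $a=0$. The Neumann condition $\partial_x v(0)=0$ then forces $b=0$. The traces $v(0)$ and $v'(0)$ make sense because $s+2>\frac32$.
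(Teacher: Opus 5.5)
Your proposal is correct and follows the paper's own argument: extend $f_2$ to $\wt f_2\in H^s(\R)$, invert the symbol $1+\xi^2$ on the whole line, and add a multiple of $e^{-x}$ to enforce $\partial_x v(0)=g$; your trace bound for $\partial_x w(0)$ and your ODE uniqueness argument supply details the paper leaves implicit. Two harmless slips: the extension of $f_2$ exists for every $s$ by the quotient definition of $H^s(\R_+)$, so $s>-\frac12$ is needed only for the Neumann trace (as you note in Step 1), and $e^{-|x|}$ times a cutoff lies only in $H^t$ for $t<\frac32$ because of its kink at $0$, so use a smooth extension such as $\chi(x)e^{-x}$ with $\chi\in C^\infty(\R)$, $\chi=1$ on $[0,\infty)$ and $\chi=0$ on $(-\infty,-1]$.
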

\begin{proof}
The existence of the solution $v$ can be established by representing it as
\begin{equation}
    v(x) = \frac{1}{2\pi} \int_{\mathbb{R}} \frac{\wh{\wt f}_2(\xi) e^{ix\xi}}{1+\xi^2}\,d\xi + c e^{-x},
\end{equation}
where $\wt f_2\in H^s(\R)$ is an extension of $f_2$ to $\R$, and the constant $c$ is uniquely determined by the boundary condition specified in \eqref{eqn:pdo-B}.
\end{proof}

We are also interested in the coupling of \eqref{eqn:pdo-A} and \eqref{eqn:pdo-B}, where the function $f_2$ may include the solution $\psi$ and has a singular behavior. This will also affect the regularity of the solution $v$.
\begin{lemma}\label{lem:U_singular}
Let $f_2 = f_s + f_r$, where $f_s(x) = ce^{-x}x^{-\frac12} \in H^{-\frac12+\varepsilon}(\R_+)$ with $c\in \R$, and $f_r \in H^{\frac12+\varepsilon}(\R_+)$ for $|\varepsilon| < \frac12$. Then the solution $v$ to $Bv=f_2$ admits the decomposition
\begin{equation}
     v (x) = -\frac{4}{3}c e^{-x} x^{\frac32} + v_r(x), 
\end{equation}
where the remainder $v_r \in H^{\frac52+\varepsilon}(\R_+)$ satisfies the bound
\begin{equation}
    \norm{v_r}_{H^{\frac52+\varepsilon}(\R_+)} \le C \paren{ \norm{f_r}_{H^{\frac12+\varepsilon}(\R_+)} + |c| + |g|}.
\end{equation}
\end{lemma}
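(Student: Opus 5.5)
The plan is to remove the singular part of $f_2$ by an explicit particular solution, so that the remainder falls under Lemma~\ref{lem:B_regular_op}. Since $B=\mathcal I-\partial_{xx}$ is a differential operator, it acts on functions on $\R_+$ directly; no Wiener--Hopf factorization is needed.

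\textbf{Step 1 (explicit computation).} Set $w(x)=e^{-x}x^{\frac32}$ on $\R_+$. A direct computation gives
\begin{equation*}
  w'(x)=e^{-x}\bigl(-x^{\frac32}+\tfrac32 x^{\frac12}\bigr),\qquad
  w''(x)=e^{-x}\bigl(x^{\frac32}-3x^{\frac12}+\tfrac34 x^{-\frac12}\bigr).
\end{equation*}
Hence
\begin{equation*}
  Bw = w - w'' = e^{-x}\bigl(3x^{\frac12}-\tfrac34 x^{-\frac12}\bigr)
\end{equation*}
as distributions on $\R_+$; the $x^{-\frac12}$ term is locally integrable, so this is also the distributional identity in the open half-line. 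It follows that
\begin{equation*}
  B\bigl(-\tfrac43 c\,w\bigr)=c\,e^{-x}x^{-\frac12}-4c\,e^{-x}x^{\frac12}
  = f_s-4c\,e^{-x}x^{\frac12}.
\end{equation*}
We also have $w'(0)=0$, so the correction does not alter the Neumann datum at the origin.

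\textbf{Step 2 (remainder problem).} Define $v_r$ as the solution of
\begin{equation*}
  Bv_r=f_r+4c\,e^{-x}x^{\frac12}\ \text{ on }\R_+,\qquad \partial_x v_r(0)=g.
\end{equation*}
To apply Lemma~\ref{lem:B_regular_op} with $s=\frac12+\varepsilon>-\frac12$, the right-hand side must lie in $H^{\frac12+\varepsilon}(\R_+)$. For $f_r$ this holds by assumption. For $e^{-x}x^{\frac12}$, I would argue as in \eqref{eqn:psi-singular-basis-ft}: the Fourier transform of $\chi_{[0,\infty)}e^{-x}x^{\frac12}$ is a constant multiple of $(\xi+i)^{-\frac32}$. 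This lies in $\mathcal F(H^{t}(\R))$ for every $t<1$, hence in $\mathcal F(H^{\frac12+\varepsilon}(\R))$ because $\varepsilon<\frac12$. Restricting to $\R_+$ then gives $e^{-x}x^{\frac12}\in H^{\frac12+\varepsilon}(\R_+)$ with norm bounded by an absolute constant $C_\varepsilon$. Lemma~\ref{lem:B_regular_op} now yields $v_r\in H^{\frac52+\varepsilon}(\R_+)$ with
\begin{equation*}
  \norm{v_r}_{H^{\frac52+\varepsilon}(\R_+)}\le C\bigl(\norm{f_r}_{H^{\frac12+\varepsilon}(\R_+)}+|c|+|g|\bigr).
\end{equation*}

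\textbf{Step 3 (identification).} The function $\tilde v=-\frac43 c\,w+v_r$ satisfies $B\tilde v=f_s+f_r=f_2$ and $\partial_x\tilde v(0)=g$. Moreover $w\in H^{\frac52+\varepsilon'}(\R_+)$ for all $\varepsilon'<\frac12$, since its transform is a constant multiple of $(\xi+i)^{-\frac52}$. Hence $\tilde v$ lies in $H^{\frac32+\varepsilon}(\R_+)$, which is the class where Lemma~\ref{lem:B_regular_op} with $s=-\frac12+\varepsilon$ gives uniqueness, at least when $\varepsilon>0$. Uniqueness then forces $\tilde v=v$.

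The step I expect to be delicate is this uniqueness when $\varepsilon\le 0$. There $f_s$ lies only in $H^{s}(\R_+)$ with $s\le-\frac12$, outside the range of Lemma~\ref{lem:B_regular_op}. I would handle it by working with $w$ and $v_r$ directly in $H^{1}(\R_+)$. The difference $v-\tilde v$ is then a solution of the homogeneous equation $v''=v$ that decays at infinity, so it equals $ae^{-x}$; the condition $\partial_x(v-\tilde v)(0)=0$ forces $a=0$. This ODE argument gives uniqueness directly and avoids any Sobolev index restriction.
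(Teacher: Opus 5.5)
Your argument is the paper's proof: subtract the explicit profile $-\frac43 c\,e^{-x}x^{\frac32}$, whose image under $B$ is $c\,e^{-x}x^{-\frac12}-4c\,e^{-x}x^{\frac12}$, and apply Lemma~\ref{lem:B_regular_op} to the remainder; your checks that $w'(0)=0$, that $e^{-x}x^{\frac12}\in H^{\frac12+\varepsilon}(\R_+)$, and of uniqueness fill in details the paper leaves implicit. One slip in Step~3: a transform of modulus $(1+\xi^2)^{-\frac54}$ only gives $w\in H^{t}$ for $t<2$, not $H^{\frac52+\varepsilon'}$ (indeed $w''\sim\frac34x^{-\frac12}\notin L^2$ near $0$, which is exactly why $w$ is the singular part), but $w\in H^{\frac32+\varepsilon}(\R_+)$ for $\varepsilon<\frac12$ is all your uniqueness step actually uses.
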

\begin{proof}
Let $v_s(x) = -\frac{4}{3} e^{-x} x^{\frac32}$ be the singular part. A direct computation shows that the operator $B = \mathcal{I} - \partial_{xx}$ acting on $v_s$ yields
\begin{equation}
    B v_s = v_s - v_s'' = e^{-x}x^{-\frac12} - 4 e^{-x}x^{\frac12}.
\end{equation}
Substituting $v = c v_s + v_r$ into the governing equation $Bv = f_s + f_r$, we find that the remainder $v_r$ satisfies
\begin{equation}
    B v_r = f_s + f_r - c B v_s = f_r + 4c e^{-x} x^{\frac12},
\end{equation}
with the boundary condition $\partial_x v_r(0) = g$. Since $f_r$ and $4c e^{-x} x^{\frac12}$ both belong to $H^{\frac12+\varepsilon}(\R_+)$ for $\abs{\varepsilon} < \frac12$, the result follows by applying the elliptic regularity from Lemma~\ref{lem:B_regular_op} to $v_r$.
\end{proof}

\subsection{Open arc case}
We now investigate higher regularity properties of the solution. 
In particular, when the given data are more regular, we aim to derive the asymptotic behavior of the boundary singularities. 
From this point on, we assume that the open arc $S$ is smooth and parameterized by arc length, that is, 
$\bm{X}\in C^\infty(I)$ and $|\bm{X}'(s)|=1$. 
Although a classical solution to~\eqref{eqn:arc-pde} only requires the curve to be $C^2$ in order for the Laplace-Beltrami operator to be defined pointwise, we do not pursue this level of generality here.
Throughout the discussion, we denote $u \circ \bm{X}$ simply by $u$, with the understanding that all functions are pulled back to the parameter domain $I$.
The single-layer operator on the open arc is then expressed as
\begin{equation}
    (V_S \psi)(s) = -\frac{1}{2\pi} \int_I \log|\bm{X}(s) - \bm{X}(\eta)|\, \psi(\eta)\, d\eta.
\end{equation}
We also introduce the corresponding operator on the straight interval,
\begin{equation}\label{eqn:V-psi}
    (V \psi)(s) = -\frac{1}{2\pi} \int_{\mathbb{R}} \log|s-\eta|\, \psi(\eta)\, d\eta.
\end{equation}
For convenience, let $R = V - V_S$ denote the difference between the two operators.
Under this parametrization, the system~\eqref{eqn:arc-bie} can be equivalently written as
\begin{align}
    -\partial_{ss} U + \psi &= f, \quad\text{in } I, \label{eq:U-psi-eq1}\\
    V_S \psi - U &= h, \quad\text{in } I, \label{eq:U-psi-eq2}\\
    \partial_s U(\pm 1) &= \pm g(\pm 1). \label{eq:U-psi-bc}
\end{align}

We take a partition of unity $\{\varphi_n\}_{n=1}^3$ such that $\varphi_1(s)$ and $\varphi_3(s)$ are equal to $1$ in neighborhoods of the endpoints $-1$ and $1$ respectively, and $\varphi_2(s)$ is supported away from the boundary $\partial I$, satisfying
\begin{equation}
    0 \le \varphi_n \le 1, \quad
    \sum_{n=1}^3 \varphi_n(s) = 1, \quad \forall\, s \in I.
\end{equation}
We express the solutions as
\begin{equation}
    \psi(s) = \sum_{n=1}^3 \vph_n(s)\psi(s), \quad U(s) = \sum_{n=1}^3 \vph_n(s)U(s), \quad s\in I.
\end{equation}
We will analyze the local solutions $\vph_n\psi, \vph_n U$ and then patch them up to obtain the regularity of the solution. 
\begin{theorem}\label{thm:global}
    Given $f \in H^{-\frac{1}{2}+\varepsilon}(I)$, $h\in H^{\frac{1}{2}+\varepsilon}(I)$ and $g$ with $\norm{g}_{\partial I} <+\infty$ for $|\varepsilon| < \frac{1}{2}$. Then the solutions satisfy $\psi \in H^{-\frac{1}{2} + \varepsilon}_{\overline{I}}(\R)$ and $U \in H^{\frac{3}{2} + \varepsilon}(I)$ and 
    \begin{equation}
        \norm{U}_{H^{\frac{3}{2}+\varepsilon}(I)}+\norm{\psi}_{H^{-\frac{1}{2}+\varepsilon}(\R)} \le C \paren{ \norm{f}_{H^{-\frac{1}{2}+\varepsilon}(I)} +\norm{h}_{H^{\frac{1}{2}+\varepsilon}(I)}+ \norm{g}_{\partial I}}.
    \end{equation}
\end{theorem}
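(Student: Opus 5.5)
We argue by a localization/bootstrap scheme, starting from the weak solution of Theorem~\ref{thm:solvability}. When $\varepsilon\le 0$ the data lie in (or embed into) $\mathbb H'$: $H^{-\frac12+\varepsilon}(I)$ embeds into $H^{-1}_{\overline I}$ after extension by zero, and $H^{\frac12+\varepsilon}(I)\subset H^{\frac12}(I)$ only when $\varepsilon\ge0$. We therefore first treat $\varepsilon\ge 0$. Here Theorem~\ref{thm:solvability} gives $(U,\psi)\in H^1(S)\times H^{-\frac12}_{\overline S}(\Gamma)$ with the a priori bound. The case $\varepsilon<0$ is recovered by duality or by running the same Fredholm argument in the shifted pair of spaces, using Lemma~\ref{lem:A_global} for negative $\varepsilon$ to obtain an isomorphism at the level of the model problem. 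The task is then to lift $\psi$ by $\varepsilon$ derivatives and $U$ by $\frac12+\varepsilon$.

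\textbf{Interior step.} Multiply the equations by $\varphi_2$. For the $U$ equation, $-\partial_{ss}(\varphi_2U)=\varphi_2(f-\psi)-2\varphi_2'U'-\varphi_2''U$, and the right side lies in $H^{-\frac12}$ initially. For $\psi$, write $V(\varphi_2\psi)=\varphi_2(h+U)+\varphi_2 R\psi+[V,\varphi_2]\psi$. The commutator gains one order, and $R=V-V_S$ has a smooth kernel locally, or at least gains $\delta$-type regularity by Lemma~\ref{lem:vs-decomp}. Away from the endpoints, $V$ is elliptic of order $-1$ with symbol $1/(2|\xi|)$, so $\varphi_2\psi\in H^{-\frac12+\varepsilon}$ with the corresponding bound. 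Feeding this into the $U$ equation then gives $\varphi_2U\in H^{\frac32+\varepsilon}$.

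\textbf{Endpoint step (main work).} Near $s=1$, translate and reflect to the half-line, with $x=1-s$. Then $\varphi_3\psi$ solves $p_+A(\varphi_3\psi)=F_1$, where
\[
F_1=\varphi_3(h+U)+\varphi_3R\psi+[V,\varphi_3]\psi+(A-V)(\varphi_3\psi).
\]
The last term is admissible because $A-V$ has symbol $(1+\xi^2)^{-1/2}-1/(2|\xi|)$ up to normalization. This symbol is of order $-3$ at infinity, modulo a smoothing log-type part, after fixing the constant by $V$'s normalization. Each piece of $F_1$ lies in $H^{\frac12+\varepsilon}(\R_+)$ provided $U\in H^{\frac12+\varepsilon}$ and $\psi\in H^{-\frac12+\varepsilon-\min(1,\delta')}$. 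Lemma~\ref{lem:A_global} then yields $\varphi_3\psi\in H^{-\frac12+\varepsilon}_{\overline{\R_+}}$. Similarly, $B(\varphi_3U)=\varphi_3(f-\psi+U)+\text{commutators}$ with Neumann data $g$. Since $\varphi_3\psi\in H^{-\frac12+\varepsilon}$ and $s=-\frac12+\varepsilon>-1$, Lemma~\ref{lem:B_regular_op}, extended down to $s>-1$ or applied in a bootstrap, gives $\varphi_3U\in H^{\frac32+\varepsilon}$. The same argument applies at $s=-1$ with $\varphi_1$.

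\textbf{Main obstacle.} The delicate point is the lower-order coupling: $F_1$ contains $U$, which is only in $H^1$ at first. When $\varepsilon$ is near $\frac12$ this exceeds the $H^{\frac12+\varepsilon}$ requirement, but in general the regularity must be bootstrapped in increments of size less than $\min(\delta,\frac12)$. Each pass improves $\psi$ via Lemma~\ref{lem:A_global} and then $U$ via the $B$-problem, until the target $\varepsilon$ is reached. A second subtlety is that Lemma~\ref{lem:B_regular_op} is stated for $s>-\frac12$, while we need $s=-\frac12+\varepsilon$. For $\varepsilon\le0$ we handle this by the weak (variational) Neumann formulation on $\R_+$, whose solution exists for $H^{-1+}$ data supported in $\overline{\R_+}$.

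Finally, we sum the three localized estimates and absorb the lower-order norms of $(U,\psi)$ using the a priori bound of Theorem~\ref{thm:solvability}. This gives the stated estimate.
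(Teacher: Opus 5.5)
Your proposal follows essentially the same route as the paper: base regularity from Theorem~\ref{thm:solvability}, localization with a partition of unity, and interior ellipticity. At the endpoints you use the Wiener--Hopf estimate of Lemma~\ref{lem:A_global} for $\varphi\psi$ and Lemma~\ref{lem:B_regular_op} for $\varphi U$, then patch and absorb the lower-order norms, exactly as the paper does.

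The bootstrap you call the main obstacle is unnecessary, and a single pass suffices. $U\in H^1(I)\subset H^{\frac12+\varepsilon}(I)$ for every $|\varepsilon|<\frac12$, and on the smooth arc the $R$-, commutator and $A-V$ terms gain far more than needed from $\psi\in H^{-\frac12}$.

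Your caveats, however, point to steps the paper's proof passes over. The constant between $A$ and $V$ does need fixing: the order $-3$ difference holds for $A-2V_0$, since $\wh V(\xi)=\frac{1}{2|\xi|}$. The case $\varepsilon\le 0$ also needs separate care, because $h$ need not lie in $H^{\frac12}(I)$ and Lemma~\ref{lem:B_regular_op} requires $s>-\frac12$.
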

\begin{proof}
Since $f\in H^{-\frac{1}{2}+\varepsilon}(I)$ belongs to $H^{-1}_{\overline{I}}(\R)$ and $h\in H^{\frac{1}{2}+\varepsilon}(I)$ belongs to $H^\frac{1}{2}(I)$, Theorem~\ref{thm:solvability} applies and we have $U\in H^1(I)$ and $\psi\in H^{-\frac{1}{2}}_{\overline{I}}(\R)$ with
    \begin{equation}
    \begin{aligned}
        \paren{\norm{U}_{H^1(I)} + \norm{\psi}_{H^{-\frac{1}{2}}(\R)} } 
        \le C \paren{ \norm{f}_{H^{-\frac{1}{2}+\varepsilon}(\R)}  + \norm{h}_{H^{\frac{1}{2}+\varepsilon}(I)} + \norm{g}_{\partial I} }.
    \end{aligned}    
    \end{equation}

    We first consider the interior regularity of $\psi$. 
    We choose a smooth cutoff function $\vph \in C_c^\infty(I)$. We consider the equation
    \begin{equation}\label{eqn:Avphpsi}
        A(\vph \psi) = (A - V)(\vph\psi)+ [V, \vph] \psi + \vph R\psi + \vph V_S \psi,
    \end{equation}
    where the commutator $[V,\vph]\psi$ is defined as    
    \begin{equation}
        [V,\vph]\psi = V(\vph \psi) - \vph V\psi.
    \end{equation}
    The operator defined by pointwise multiplication by $\vph$ is a pseudo-differential operator of order $0$, and it holds that
    \begin{equation}
        \norm{\vph u}_{H^s(\R)} \le C \norm{u}_{H^s(\R)}, \quad \forall s\in\R.
    \end{equation}
    The complete symbol of the operator $V$ is given by $\wh V(\xi) = \frac{1}{2|\xi|}$. Let $V_0$ be the pseudo-differential operator with symbol $\frac{1-\rho(\xi)}{2|\xi|}$ where $\rho(\xi) \in C_c^\infty(\R)$ is a cutoff function being $1$ for $|\xi| < \frac{1}{2}$ and $\rho(\xi)=0$ for $|\xi|>1$. 
    Then $V-V_0$ defines a smoothing operator and $V-V_0:H^s(\R)\to H^r(\R)$ is bounded for all $s < r$.
    We also have
    \begin{equation}\label{eqn:AminusV0-symbol}
        |\wh A(\xi) - \wh V_0(\xi)| \le C (1 + |\xi| )^{-3},
    \end{equation}
    \rev{By~\eqref{eqn:AminusV0-symbol}, \(A-V_0\) is a pseudo-differential operator of order \(-3\), while \(V-V_0\) is smoothing. Hence,}
    \begin{equation}\label{eqn:AminusV-est}
    \begin{aligned}
        \norm{(A-V)\vph\psi}_{H^s(\R)} &\le \norm{(A-V_0)\vph\psi}_{H^s(\R)} + \norm{(V-V_0)\vph\psi}_{H^s(\R)} \\
        &\le C\norm{\vph\psi}_{H^{s-3}(\R)} \le C \norm{\psi}_{H^{s-3}(\R)}, 
    \end{aligned}
    \end{equation}
    \rev{In particular, this term is bounded by \(C\norm{\psi}_{H^{-\frac{1}{2}}(\R)}\) whenever \(s\le \frac{5}{2}\).}
    \rev{For the commutator term, we split}
    \begin{equation}
        [V,\vph]\psi = [V_0, \vph]\psi + [V-V_0, \vph]\psi.
    \end{equation}
    \rev{Since \(V_0\) has order \(-1\) and multiplication by \(\vph\) has order \(0\), the commutator \([V_0,\vph]\) has order \(-2\); moreover, \([V-V_0,\vph]\) is smoothing. Therefore}
    \begin{equation}\label{eqn:V-comm-est}
    \begin{aligned}
        \norm{[V,\vph]\psi}_{H^s(\R)} &\le \norm{[V_0,\vph]\psi}_{H^s(\R)} + \norm{[V-V_0, \vph]\psi}_{H^s(\R)}  \\
        &\le C \norm{\psi}_{H^{s-2}(\R) }, 
    \end{aligned}
    \end{equation}
    \rev{In particular, this term is bounded by \(C\norm{\psi}_{H^{-\frac{1}{2}}(\R)}\) whenever \(s\le \frac{3}{2}\).}
    Now we consider $ \vph R\psi$, which can be written as
    \begin{equation}
        \begin{aligned}
            \vph R\psi (s) = -\frac{1}{2\pi} \vph(s)\int_{I} \log\abs{\frac{\Delta\bm{X}}{s-t} }\psi(t)\,dt,
        \end{aligned}
    \end{equation}
    where $\Delta\bm{X} = \bm{X}(s)-\bm{X}(t)$.
    Note that $\forall N$, $\bm{X}(t) - \bm{X}(s) = \sum_{j=1}^N \frac{\bm{X}^{(j)}(s)}{j!} (t-s)^j + \mathcal{O}(|t-s|^{N+1})$. 
    So $\frac{\bm{X}(t)-\bm{X}(s)}{t-s} \in C^\infty(\overline{I} \times \overline{I})$ and is nowhere vanishing, which implies the kernel $\log\abs{\frac{\Delta\bm{X}}{s-t}}$ is smooth.
    \rev{Since $\psi\in H^{-\frac{1}{2}}_{\overline{I}}(\R)\subset H_{\overline{I}}^{-1}(\R)$, we may write}
    \begin{equation}
        \psi(s) = \psi_0 + \partial_s F (s), \quad s\in I, \quad \psi_0 = \int_I \psi(s)\,ds, \quad F\in L^2 (I),
    \end{equation}
    \rev{where \(F\) is chosen to have zero mean. The one-dimensional primitive estimate, together with \(H^{-\frac12}_{\overline I}(\mathbb R)\hookrightarrow H^{-1}(I)\), yields}
    \begin{equation}
        \rev{|\psi_0|+\norm{F}_{L^2(I)}
        \le C\norm{\psi}_{H^{-1}(I)}
        \le C\norm{\psi}_{H^{-\frac12}(\mathbb R)}.}
    \end{equation}
    \rev{Integrating by parts in this decomposition gives}
    \begin{equation}
    \begin{aligned}
        \int_{I} \log\abs{\frac{\Delta \bm{X}}{s-t} }\psi(t)\,dt = \psi_0\int_{I} \log\abs{\frac{\Delta \bm{X}}{s-t} } \,dt - \int_I \partial_t   \log\abs{\frac{\Delta \bm{X}}{s-t} } F(t)\,dt.
    \end{aligned}
    \end{equation}
    and for every $k\in \N$, by Cauchy-Schwarz inequality,
    \begin{equation}
    \begin{aligned}
        &\norm{\partial_s^k\int_I  \partial_t  \log\abs{\frac{\Delta \bm{X}}{s-t} } F(t)\,dt}_{L^2(I)}^2 = \int_I \paren{\int_I \partial_s^k \partial_t   \log\abs{\frac{\Delta \bm{X}}{s-t} } F(t)\,dt}^2\,ds\\
        &\le \int_I \int_I \paren{ \partial_s^k \partial_t   \log\abs{\frac{\Delta \bm{X}}{s-t} }}^2\,dt   \,ds \norm{F}_{L^2(I)}^2 \le C \norm{F}_{L^2(I)}^2 \le C \norm{\psi}_{H^{-\frac{1}{2}}(\R)}\rev{^2.}
    \end{aligned}
    \end{equation}
    This implies that $R\psi\in H^{k}(I)$ for every $k \in \N$.
    Thus, let $\wt{R\psi}$ be an extension of $R\psi$ with $\norm{\wt{R\psi}}_{H^k(\R)}\le 2\norm{R\psi}_{H^k(I)}$ we have
    \begin{equation}
    \begin{aligned}
        \norm{\vph R\psi}_{H^k(\R)} & = \norm{\vph \wt{R\psi}}_{H^k(\R)} \le C\norm{\wt{R\psi}}_{H^k(\R)} \\
        &\le C\norm{R\psi}_{H^k(I)} \le   C \norm{\psi}_{H^{-\frac{1}{2}}(\R)}, \quad \forall k\in\N.
    \end{aligned}
    \end{equation}
    By \rev{Sobolev }interpolation\rev{ \cite[Appendix~B]{McLean2000}}, we have
    \begin{equation}
    \begin{aligned}
        \norm{\vph R\psi}_{H^s(\R)} & \le  C \norm{\psi}_{H^{-\frac{1}{2}}(\R)}, \quad \forall s \ge 0.
    \end{aligned}
    \end{equation}
    Recall that $V_S\psi = U+h$ on $I \supseteq \text{supp }\vph$. 
    Let $\wt{U}$, $\wt{h}$ be extensions of $U$ and $h$ with $\norm{\wt{U}}_{H^1(\R)}\le 2\norm{U}_{H^1(I)}$ and $\norm{\wt{h}}_{H^{\frac{1}{2}+\varepsilon}(\R)} \le 2\norm{h}_{H^{\frac{1}{2}+\varepsilon}(I)}$.
    Then, $\vph V_S\psi = \vph (U+h) = \vph (\wt{U}+\wt{h})$ and
    \begin{equation}
    \begin{aligned}
        \norm{\vph V_S\psi}_{H^{\frac{1}{2}+\varepsilon}(\R)} &= \norm{\vph (\wt{U} + \wt{h})}_{H^{\frac{1}{2}+\varepsilon}(\R)} \le C\norm{\wt{U} + \wt{h}}_{H^{\frac{1}{2}+\varepsilon}(\R)}\\
        &\le C \paren{\norm{U}_{H^1(I)} + \norm{h}_{H^{\frac{1}{2}+\varepsilon}(I)} },
    \end{aligned}
    \end{equation}
    In summary, for every $|\varepsilon|<\frac{1}{2}$,
    \begin{equation}
    \begin{aligned}
        &\norm{ (A - V)(\vph\psi)+ [V, \vph] \psi + \vph R\psi + \vph V_S \psi }_{H^{\frac{1}{2}+\varepsilon}(\R)} \\
        &\le C \paren{\norm{U}_{H^1(I)} +\norm{h}_{H^{\frac{1}{2}+\varepsilon}(I)}+ \norm{\psi}_{H^{-\frac{1}{2}(\R)}}}.
    \end{aligned}
    \end{equation}
    The inverse operator $A^{-1}$ has symbol $(1 + |\xi|^2)^{\frac{1}{2}}$ and is of order $1$. 
    Using~\eqref{eqn:Avphpsi}, we have
    \begin{equation}
        \norm{\vph\psi}_{H^{-\frac{1}{2}+\varepsilon}(\R)} \le C \paren{\norm{U}_{H^1(I)}  + \norm{h}_{H^{\frac{1}{2}+\varepsilon}(I)} + \norm{\psi}_{H^{-\frac{1}{2}(\R)}}}.
    \end{equation}
    
    To study the regularity of $\psi$ near the boundary $\partial I$, we shift the left endpoint $x=-1$ to the origin under the transformation $x'=1+x$ and let $\vph$ be a smooth cutoff function localized at the origin, satisfying $\text{supp }\vph\supset (-\delta, \delta)$ for some $\delta>0$ and $\text{supp }\vph \cap\overline{\R_+} \subset [0,1)$.
    We now consider the following equation on the half-line,
    \begin{equation}\label{eqn:loc_psi_bdr}
        p_+ A(\vph \psi) = p_+\paren{ (A - V)(\vph\psi)+ [V, \vph] \psi + \vph R\psi + \vph V_S \psi }.
    \end{equation}
    \rev{Using the \(A-V\) estimate in~\eqref{eqn:AminusV-est} and the commutator estimate in~\eqref{eqn:V-comm-est} we obtain}
    \begin{equation}\label{eqn:p_A-V_Vphi}
    \begin{aligned}
        \norm{p_+\paren{ (A-V)(\vph\psi) + [V,\vph]\psi} }_{H^{s}(\R_+)} &\le \norm{ (A-V)(\vph\psi) + [V,\vph]\psi}_{H^{s}(\R)} \\
        &\le C\norm{\psi}_{H^{s-2}(\R)}  ,        
    \end{aligned}
    \end{equation}
    for every $s\ge \frac{3}{2}$,     and
    \begin{equation}\label{eqn:p_phi_Rpsi_integer}
    \begin{aligned}
        & \norm{p_+ \vph R\psi}_{H^k(\R_+)} = \norm{p_+ \vph \wt{R\psi}}_{H^k(\R_+)} \le C\norm{\vph \wt{R\psi}}_{H^k(\R)} \\
        &\le C\norm{\wt{R\psi}}_{H^k(\R)} \le C\norm{R\psi}_{H^k(I)}  \le  C \norm{\psi}_{H^{-\frac{1}{2}}(\R)}, \quad \forall k \in\N.
    \end{aligned}
    \end{equation}
    By \rev{Sobolev }interpolation\rev{ \cite[Appendix~B]{McLean2000}}, we have
    \begin{equation}\label{eqn:p_phi_Vs}
        \norm{p_+ \vph R\psi}_{H^s(\R_+)} \le  C \norm{\psi}_{H^{-\frac{1}{2}}(\R)}, \quad \forall s\ge 0.
    \end{equation}
    Since $\text{supp }\vph \cap \R_+ \subset I$, it holds that $p_+\vph V_S\psi = p_+\vph( U + h) = p_+ \vph (\wt{U}+\wt{h})$.
    Then
    \begin{equation}
        \norm{p_+\vph V_S\psi}_{H^{\frac{1}{2}+\varepsilon}(\R_+)} \le \norm{\vph( \wt{U} + \wt{h})}_{H^{\frac{1}{2}+\varepsilon}(\R)}  \le C\paren{ \norm{U}_{H^1(I)}  + \norm{h}_{H^{\frac{1}{2}+\varepsilon}(I)} }.
    \end{equation}
    Thus, by Lemma~\ref{lem:A_global}, we obtain
    \begin{equation}
        \norm{\vph\psi}_{H^{-\frac{1}{2}+\varepsilon}(\R)} \le C \paren{\norm{U}_{H^1(I)} + \norm{h}_{H^{\frac{1}{2}+\varepsilon}(I)}  + \norm{\psi}_{H^{-\frac{1}{2}}(\R)}}.
    \end{equation}
    Patching up local solutions and using Theorem~\ref{thm:solvability}, we obtain
    \begin{equation}\label{eqn:psi-I-reg}
    \begin{aligned}
        \norm{\psi}_{H^{-\frac{1}{2}+\varepsilon}(\R)} & \le C \paren{\norm{U}_{H^1(I)}  + \norm{h}_{H^{\frac{1}{2}+\varepsilon}(I)} + \norm{\psi}_{H^{-\frac{1}{2}}(\R)} } \\
        &\le C \paren{ \norm{f}_{H^{-\frac{1}{2}+\varepsilon}(\R)} + \norm{g}_{\partial I}  + \norm{h}_{H^{\frac{1}{2}+\varepsilon}(I)} }.
    \end{aligned}    
    \end{equation}

    To study the regularity of $U$, we use the cutoff function $\vph$ with $\text{supp }\vph \subset I$ and consider the equation
    \begin{equation}
        B(\vph U) = \vph U - \partial_{ss} \vph U - 2\partial_s \vph\partial_s U - \vph \partial_{ss} U.
    \end{equation}
    Recalling that $\partial_{ss} U = \psi - f$ on $I$ and $\text{supp } \partial_s^k\vph \subset I$ for $k \ge 0$, we have
    \begin{equation}
    \begin{aligned}
        &\norm{\vph U - \partial_{ss} \vph U - 2\partial_s \vph\partial_s U - \vph \partial_{ss} U}_{H^{-\frac{1}{2}+\varepsilon}(\R)} \\
        &= \norm{\vph (\wt{U}-\psi + f) - \partial_{ss} \vph \wt{U} - 2\partial_s \vph\partial_s \wt{U}}_{H^{-\frac{1}{2}+\varepsilon}(\R)} \\
        &\le C\paren{ \norm{\wt{U}}_{H^{\frac{1}{2}+\varepsilon}(\R)} +  \norm{\psi}_{H^{-\frac{1}{2}+\varepsilon}(I)} + \norm{f}_{H^{-\frac{1}{2}+\varepsilon}(I)} }\\
        &\le C\paren{ \norm{U}_{H^{1}(I)} +  \norm{\psi}_{H^{-\frac{1}{2}+\varepsilon}(\R)} + \norm{f}_{H^{-\frac{1}{2}+\varepsilon}(I)} }.
    \end{aligned}
    \end{equation}
    where we have used $\norm{\vph f}_{H^{-\frac{1}{2}+\varepsilon}(\R)}\le C \norm{\vph f}_{H^{-\frac{1}{2}+\varepsilon}(I)}$ since $\text{supp }(\vph f) \subset I$.
    The inverse operator $B^{-1}$ has the symbol $(1 + |\xi|^2)^{-1}$ and is of order $-2$. Hence,
    \begin{equation}
        \norm{\vph U}_{H^{\frac{3}{2}+\varepsilon}(I)} \le \norm{\vph U}_{H^{\frac{3}{2}+\varepsilon}(\R)} \le C\paren{ \norm{U}_{H^{1}(I)} +  \norm{\psi}_{H^{-\frac{1}{2}+\varepsilon}(\R)} + \norm{f}_{H^{-\frac{1}{2}+\varepsilon}(I)} }.
    \end{equation}
    To study the boundary regularity of $U$, we use the same cutoff function $\vph$ and consider the following equation on the half-line:
    \begin{equation}
        p_+B(\vph U) = p_+ \paren{ \vph U - \partial_{ss} \vph U - 2\partial_s \vph\partial_s U - \vph \partial_{ss} U},
    \end{equation}
    with boundary condition $\partial_s(\vph U)(0) = \partial_s U(0) = g(0)$.
    For the right-hand side, we have
    \begin{equation}
    \begin{aligned}
        &\norm{p_+ \paren{ \vph U - \partial_{ss} \vph U - 2\partial_s \vph\partial_s U - \vph \partial_{ss} U}}_{H^{-\frac{1}{2}+\varepsilon}(\R_+)} \\
        &=\norm{p_+ \paren{ \vph (\wt{U}-\psi+f) - \partial_{ss} \vph \wt{U} - 2\partial_s \vph\partial_s \wt{U}}}_{H^{-\frac{1}{2}+\varepsilon}(\R_+)} \\
        &\le \norm{\vph (\wt{U}-\psi+f) - \partial_{ss} \vph \wt{U} - 2\partial_s \vph\partial_s \wt{U} }_{H^{-\frac{1}{2}+\varepsilon}(\R)} \\
        &\le C\paren{ \norm{\wt{U}}_{H^{\frac{1}{2}+\varepsilon}(\R)}  +\norm{\psi}_{H^{-\frac{1}{2}+\varepsilon}(\R)} + \norm{f}_{H^{-\frac{1}{2}+\varepsilon}(I)} }\\
        &\le C\paren{ \norm{U}_{H^{1}(I)} +  \norm{\psi}_{H^{-\frac{1}{2}+\varepsilon}(\R)} + \norm{f}_{H^{-\frac{1}{2}+\varepsilon}(I)} }.
    \end{aligned}
    \end{equation}
    By Lemma~\ref{lem:B_regular_op} , we have
    \begin{equation}
        \norm{\vph U}_{H^{\frac{3}{2}+\varepsilon}(I)} \le C\paren{ \norm{U}_{H^{1}(I)} +  \norm{\psi}_{H^{-\frac{1}{2}+\varepsilon}(\R)} + \norm{f}_{H^{-\frac{1}{2}+\varepsilon}(I)} + \norm{g}_{\partial I} }.
    \end{equation}
    Patching up local solutions and using Theorem~\ref{thm:solvability}, we obtain
    \begin{equation}
    \norm{U}_{H^{\frac{3}{2}+\varepsilon}(I)} \le C \paren{ \norm{f}_{H^{-\frac{1}{2}+\varepsilon}(I)} + \norm{g}_{\partial I}  + \norm{h}_{H^{\frac{1}{2}+\varepsilon}(I)} }.
    \end{equation}
This completes the proof.
\end{proof}

\begin{theorem}
Given $f\in H^{\frac{1}{2}+\varepsilon}(I)$, $h\in H^{\frac{3}{2}+\varepsilon}(I)$, and $g$ with $\norm{g}_{\partial I} < \infty$, the solutions $\psi$ and $U$ have the form
\begin{equation}\label{eqn:U_final}
    U(x) = \sum_{p\in\partial I} c_2(p) \vph(|x-p|) |x-p|^{\frac{3}{2}} + U_r(x),\quad U_r \in H^{\frac{5}{2}+\varepsilon}(I),
\end{equation}
and
\begin{equation}\label{eqn:psi+_final}
    \psi(x) = \chi_{I}(x)\sum_{p\in\partial I} c_1(p) \vph(|x-p|) |x-p|^{-\frac{1}{2}}  + \psi_r(x),\quad \psi_r \in H^{\frac{1}{2}+\varepsilon}_{\overline{I}}(\R),
\end{equation}
where $c_1(p),c_2(p)$ are constants depending on $p$ and $\vph(d)$ is a smooth cut-off function.
\end{theorem}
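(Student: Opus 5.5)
The plan is to repeat the localization argument from the proof of Theorem~\ref{thm:global} one level of regularity higher, and to use Lemma~\ref{lem:psi_singular} and Lemma~\ref{lem:U_singular} in place of Lemma~\ref{lem:A_global} and Lemma~\ref{lem:B_regular_op} at the endpoints.

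\emph{Starting point.} Since $f\in H^{\frac12+\varepsilon}(I)\subset H^{-\frac12+\varepsilon}(I)$ and $h\in H^{\frac32+\varepsilon}(I)$, Theorem~\ref{thm:global} already gives $\psi\in H^{-\frac12+\varepsilon}_{\overline I}(\R)$ and $U\in H^{\frac32+\varepsilon}(I)$, together with the corresponding bounds. This is the input to every bootstrap step below.

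\emph{Interior regularity.} Take $\vph\in C_c^\infty(I)$ and go back to \eqref{eqn:Avphpsi}. I will show that its right-hand side lies in $H^{\frac32+\varepsilon}(\R)$:
\begin{itemize}
\item[(i)] by \eqref{eqn:AminusV-est} with $s=\frac32+\varepsilon$, the term $(A-V)(\vph\psi)$ is controlled by $\norm{\psi}_{H^{-\frac32+\varepsilon}}$;
\item[(ii)] by \eqref{eqn:V-comm-est}, the commutator $[V,\vph]\psi$ is controlled by $\norm{\psi}_{H^{-\frac12+\varepsilon}}$;
\item[(iii)] $\vph R\psi$ is smooth, since the kernel of $R$ is smooth;
\item[(iv)] $\vph V_S\psi=\vph(U+h)$ lies in $H^{\frac32+\varepsilon}$ by the starting regularity of $U$ and the assumption on $h$.
\end{itemize}
Because $A^{-1}$ has order one, it follows that $\vph\psi\in H^{\frac12+\varepsilon}(\R)$. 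Then $\partial_{ss}U=\psi-f\in H^{\frac12+\varepsilon}$ locally, and applying $B^{-1}$ to $B(\vph U)$ gives $\vph U\in H^{\frac52+\varepsilon}$.

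\emph{Near each endpoint.} Shift $p=-1$ to the origin as in the proof of Theorem~\ref{thm:global}; the right endpoint is handled symmetrically by reflection. The right-hand side of \eqref{eqn:loc_psi_bdr} then lies in $H^{\frac32+\varepsilon}(\R_+)$, by (i)--(iv) above, now using \eqref{eqn:p_A-V_Vphi} with $s=\frac32+\varepsilon\ge\frac32$ and \eqref{eqn:p_phi_Vs}. Lemma~\ref{lem:psi_singular} then gives
\[
\vph\psi=c_1\chi_{[0,\infty)}e^{-x}x^{-\frac12}+\psi_r,\qquad \psi_r\in H^{\frac12+\varepsilon}_{\overline{\R_+}}.
\]
Next apply the $U$-equation
\[
p_+B(\vph U)=p_+\bigl(\vph(U-\psi+f)-\vph''U-2\vph'U'\bigr).
\]
Its right-hand side splits as $-c_1e^{-x}x^{-\frac12}$ plus a remainder in $H^{\frac12+\varepsilon}(\R_+)$. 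The remainder terms are handled as follows:
\begin{itemize}
\item $\vph U$, $\vph''U$, $\vph'U'$ are in $H^{\frac12+\varepsilon}$ by the starting regularity of $U$;
\item $\vph f$ is in $H^{\frac12+\varepsilon}$ by assumption;
\item $\vph(e^{-x}x^{-\frac12})$ differs from $e^{-x}x^{-\frac12}$ only by a smooth function on $[0,\infty)$ that vanishes near the origin, so it contributes the singular term plus an $H^{\frac12+\varepsilon}$ piece;
\item $\psi_r$ and $(1-\vph)\psi_r$-type terms are in $H^{\frac12+\varepsilon}$.
\end{itemize}
The Neumann datum is $\partial_s(\vph U)(0)=g$. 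Lemma~\ref{lem:U_singular} then gives
\[
\vph U=\tfrac43 c_1e^{-x}x^{\frac32}+v_r,\qquad v_r\in H^{\frac52+\varepsilon}(\R_+).
\]

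\emph{Reconciling the singular profiles.} The profiles $e^{-x}x^{-\frac12}$ and $e^{-x}x^{\frac32}$ must be replaced by $\vph(|x-p|)|x-p|^{-\frac12}$ and $\vph(|x-p|)|x-p|^{\frac32}$, where $\vph$ equals $1$ near $0$. The differences are:
\begin{itemize}
\item away from the endpoint, smooth and compactly supported;
\item near the endpoint, of the form $(e^{-x}-1)x^{-\frac12}=x^{\frac12}\cdot(\text{smooth})$ and $(e^{-x}-1)x^{\frac32}=x^{\frac52}\cdot(\text{smooth})$.
\end{itemize}
The zero extension of $x^{\frac12}\chi$ lies in $H^{1-}\supset H^{\frac12+\varepsilon}$ for $|\varepsilon|<\frac12$, and $x^{\frac52}$ lies in $H^{3-}(\R_+)\supset H^{\frac52+\varepsilon}$. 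So both differences can be absorbed into the remainders. Patching the interior and endpoint pieces with the partition of unity $\{\vph_n\}$ then gives \eqref{eqn:U_final} and \eqref{eqn:psi+_final}, with $c_2(p)=\tfrac43c_1(p)$ (the sign depends on orientation).

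\emph{Main obstacle.} The delicate step is verifying the hypothesis of Lemma~\ref{lem:psi_singular}, namely that every term on the right of \eqref{eqn:loc_psi_bdr} is in $H^{\frac32+\varepsilon}(\R_+)$ after restriction, even though $\psi$ itself is only in $H^{-\frac12+\varepsilon}$. For $(A-V)$ and the commutator this is fine because they have order $\le-2$. The real requirement is that $p_+\vph V_S\psi=p_+\vph(U+h)$ be in $H^{\frac32+\varepsilon}$, which uses exactly the $H^{\frac32+\varepsilon}$ regularity of $U$ from Theorem~\ref{thm:global}. Once this is checked, the remaining care is the bookkeeping of $\vph$ times the singular functions and the range $|\varepsilon|<\frac12$ needed in Lemma~\ref{lem:U_singular}.
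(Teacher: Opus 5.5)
Your proposal is correct and takes essentially the same route as the paper: bootstrap from Theorem~\ref{thm:global}, check that the right-hand side of \eqref{eqn:loc_psi_bdr} lies in $H^{\frac32+\varepsilon}$ so that Lemma~\ref{lem:psi_singular} applies at the endpoints and $A^{-1}$ in the interior, then substitute the resulting decomposition of $\vph\psi$ into the local $U$-equation, apply Lemma~\ref{lem:U_singular} near the endpoints and $B^{-1}$ in the interior, and patch. Your explicit replacement of the profiles $e^{-x}x^{a}$ by $\vph(x)x^{a}$ and the identification $c_2=\frac43c_1$ fill in details the paper leaves implicit; one correction is that the inclusions are reversed as you wrote them, since what you need (and what holds) is $H^{1-}\subset H^{\frac12+\varepsilon}$ and $H^{3-}\subset H^{\frac52+\varepsilon}$.
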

\begin{proof}
    We continue to consider equation~\eqref{eqn:loc_psi_bdr} for the local solution $\vph \psi$.
    Applying Theorem~\ref{thm:global}, we have
    \begin{equation}
    \begin{aligned}
        \norm{p_+\vph V_S\psi}_{H^{\frac{3}{2}+\varepsilon}(\R_+)} &\le \norm{\vph (\wt{U}+\wt{h})}_{H^{\frac{3}{2}+\varepsilon}(\R)} \le C\norm{\wt{U} + \wt{h}}_{H^{\frac{3}{2}+\varepsilon}(\R)}\\
        &\le C\paren{\norm{U}_{H^{\frac{3}{2}+\varepsilon}(I)} + \norm{h}_{H^{\frac{3}{2}+\varepsilon}(I)}}.
    \end{aligned}
    \end{equation}
    \rev{We first control the \(R\)-term in~\eqref{eqn:loc_psi_bdr}. Since \(R=V-V_S\), this contribution is \(p_+\vph R\psi\). Choosing an integer \(k>\frac32+\varepsilon\) in~\eqref{eqn:p_phi_Rpsi_integer} and using the embedding \(H^k(\R_+)\hookrightarrow H^{\frac32+\varepsilon}(\R_+)\), we obtain}
    \begin{equation}
    \begin{aligned}
        \rev{\norm{p_+\vph R\psi}_{H^{\frac32+\varepsilon}(\R_+)}
        \le C\norm{\psi}_{H^{-\frac12}(\R)}
        \le C\norm{\psi}_{H^{-\frac12+\varepsilon}(\R)}.}
    \end{aligned}
    \end{equation}
    \rev{Next, we take \(s=\frac32+\varepsilon\) in~\eqref{eqn:p_A-V_Vphi} and combine the resulting estimate with the bound above for \(p_+\vph R\psi\) and the preceding bound for \(p_+\vph V_S\psi\). The triangle inequality then gives}
    \begin{equation}
    \begin{aligned}        
        &\norm{p_+\paren{ (A - V)(\vph\psi)+ [V, \vph] \psi + \rev{\vph R\psi} + \vph V_S \psi } }_{H^{\frac{3}{2}+\varepsilon}(\R_+)} \\
        & \le C \paren{ \norm{U}_{H^{\frac{3}{2}+\varepsilon}(I)} + \norm{\psi}_{H^{-\frac{1}{2}+\varepsilon}(\R) } + \norm{h}_{H^{\frac{3}{2}+\varepsilon}(I)} }.
    \end{aligned}
    \end{equation}
    By Lemma~\ref{lem:psi_singular}, if $\text{supp }\vph \cap \{0\} \neq \emptyset$, the local solution has the form
    \begin{equation}
        \vph\psi (x) = c \chi_{[0,\infty)} (x) e^{-x} x^{-\frac{1}{2}} + \psi_r(x), \quad \psi_r \in H^{\frac{1}{2}+\varepsilon}_{\overline{\R_+}}(\R),
    \end{equation}
    and if $\text{supp }\vph \subset I$, the local solution $\vph\psi\in H^{\frac{1}{2}+\varepsilon}_{\overline{I}}(\R)$. Summing all local solutions yields \eqref{eqn:psi+_final}.
    
    Using \eqref{eqn:psi+_final} in the local equation for $\vph U$ with $\text{supp }\vph \cap \{0\} \neq \emptyset$ and applying Lemma~\ref{lem:U_singular}, the local solution $\vph U$ has the form
    \begin{equation}
        \vph U(x) = d e^{-x}x^{\frac{3}{2}} + U_r(x), \quad U_r\in H^{\frac{5}{2}+\varepsilon}(\R_+),
    \end{equation}
    where $d$ is some constant.
    For the interior solution, since $\vph \psi \in H^{\frac{1}{2}+\varepsilon}_{\overline{I}}(\R)$ when $\text{supp }\vph\subset I$, we have $\vph U\in H^{\frac{5}{2}+\varepsilon}(I)$.
    Patching together the local solutions for $U$ yields \eqref{eqn:U_final}.
\end{proof}

Mapping the functions back to $S$, by Lemma~\ref{lem:norm-equv-2}, we consequently have
\begin{theorem}\label{thm:reg-u-psi}
Given $f \in H^{\frac{1}{2}+\varepsilon}(S)$, $h\in H^{\frac{3}{2}+\varepsilon}(S)$ and $g$ with $\norm{g}_{\partial S} < +\infty$, the equation \eqref{eqn:arc-bie} has a unique solution $\psi \in H^{-\frac{1}{2}+\varepsilon}_{\overline{S}}(\Gamma)$ with $ \at{\psi}{K}\in H^{\frac{1}{2}+\varepsilon}(K)$ and $U \in H^{\frac{3}{2}+\varepsilon} (S)\cap H_{loc}^{\frac{5}{2}+\varepsilon}(S)$ for every $|\varepsilon| < \frac{1}{2}$ and compact subset $K \Subset S$. In particular, the solutions have the form
\begin{equation}\label{eqn:decomp2}
     U = \sum_{p\in\partial S} c_2(p) \vph(d) d^{\frac{3}{2}} + U_r,\quad \psi = \sum_{p\in\partial S} c_1(p) \vph(d) d^{-\frac{1}{2}}  + \psi_r,\quad \text{on } S,
\end{equation}
with $U_r \in H^{\frac{5}{2}+\varepsilon}(S)$ and $\psi_r \in H^{\frac{1}{2}+\varepsilon}_{\overline{S}}(\Gamma)$. Here, $d$ is the arclength along $\Gamma$ near the boundary $\partial S$, and $\vph$ is a smooth cutoff function.
\end{theorem}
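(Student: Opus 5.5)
The plan is to transfer the parameter-domain results (Theorem~\ref{thm:global} and the theorem immediately preceding this one) to $S\subset\Gamma$ through the arc-length parametrization $\bm X$, using Lemma~\ref{lem:norm-equv-2} for the norm equivalences.

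\textbf{Step 1: existence and uniqueness.} Since $\bm X\in C^\infty(I)$ and $S$ is smooth, the hypothesis $C^{1,\delta}$ of Theorem~\ref{thm:solvability} holds. Moreover $f\in H^{\frac12+\varepsilon}(S)$ gives $f\in H^{-1}_{\overline S}(\Gamma)$ after extension by zero, and $h\in H^{\frac32+\varepsilon}(S)\subset H^{\frac12}(S)$. Hence Theorem~\ref{thm:solvability} yields a unique weak solution $(U,\psi)\in\mathbb H$.

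\textbf{Step 2: pull back and apply the parameter-domain theorems.} By Lemma~\ref{lem:norm-equv-2}, with $k$ arbitrary since $\bm X$ is smooth, the pulled-back data satisfy $f\circ\bm X\in H^{\frac12+\varepsilon}(I)$, $h\circ\bm X\in H^{\frac32+\varepsilon}(I)$, and $\norm{g}_{\partial I}<\infty$. The pulled-back system is \eqref{eq:U-psi-eq1}--\eqref{eq:U-psi-bc}.

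Applying Theorem~\ref{thm:global} with $\varepsilon$ gives $\psi\in H^{-\frac12+\varepsilon}_{\overline I}(\R)$ and $U\in H^{\frac32+\varepsilon}(I)$. The preceding theorem then gives the decompositions \eqref{eqn:psi+_final} and \eqref{eqn:U_final}.

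Two small points need checking here:
\begin{itemize}
\item The data $f\in H^{\frac12+\varepsilon}\subset H^{-\frac12+\varepsilon}$, so Theorem~\ref{thm:global} does apply.
\item The range $|\varepsilon|<\frac12$ is the same in all the cited results.
\end{itemize}

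\textbf{Step 3: translate the expansions back to $S$.} Since $\bm X$ is an arc-length parametrization, $|x-p|$ for $x\in I$ near the endpoint $p\in\partial I$ equals the arclength distance $d$ along $\Gamma$ from $\bm X(x)$ to $\bm X(p)$. Therefore $\vph(|x-p|)|x-p|^{\alpha}$ is exactly $(\vph(d)d^{\alpha})\circ\bm X$. Applying Lemma~\ref{lem:norm-equv-2} to the remainders gives $U_r\in H^{\frac52+\varepsilon}(S)$ and $\psi_r\in H^{\frac12+\varepsilon}_{\overline S}(\Gamma)$.

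\textbf{Step 4: local regularity.} For a compact set $K\Subset S$, choose the cutoff so that $\vph(d)=0$ on $K$. Then on $K$ we have $\psi=\psi_r$ and $U=U_r$, which gives $\at{\psi}{K}\in H^{\frac12+\varepsilon}(K)$ and $U\in H^{\frac52+\varepsilon}_{loc}(S)$.

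The main delicate point is Step 3, specifically the $H^{s}_{\overline S}(\Gamma)$ part of Lemma~\ref{lem:norm-equv-2}. The open arc $\bm X$ must be extended to a smooth parametrization $\wt{\bm X}$ of $\Gamma$, and we must confirm that zero extension commutes with composition. This holds because the supports lie in $\overline I$ and $\wt{\bm X}|_I=\bm X$. Beyond this, the argument is bookkeeping.
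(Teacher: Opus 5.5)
Your proposal is correct and follows the paper's own argument. The paper proves this theorem simply by mapping the parameter-domain results (Theorem~\ref{thm:global} and the decomposition theorem preceding this one) back to $S$ via Lemma~\ref{lem:norm-equv-2}; your Steps 1--3 make that bookkeeping explicit. In Step 4 you do not need to re-choose the cutoff: on $K\Subset S$ the arclength $d$ is bounded below, so $\vph(d)d^{\frac32}$ and $\vph(d)d^{-\frac12}$ are already smooth there.
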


We can further show that $(U,\psi)$ and $(V_\Omega\psi, U)$ are classical solutions.
\begin{proposition}\label{bie-pointwise}
If $f \in H^{\frac{1}{2}+\varepsilon}(S)$,  $h\in H^{\frac{3}{2}+\varepsilon}(S)$ and $g$ with $\norm{g}_{\partial S} < +\infty$ for $\varepsilon\in(0,\frac{1}{2})$, then $U\in C^2(S)\cap C^1(\overline{S})$, $\psi\in C(S)$, and the equation~\eqref{eqn:arc-bie} is satisfied pointwise.
\end{proposition}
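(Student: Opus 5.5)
The plan is to read everything off the decomposition in Theorem~\ref{thm:reg-u-psi} and then use one-dimensional Sobolev embeddings. We work in the arclength parametrization on $I$ and transfer back to $S$ by Lemma~\ref{lem:norm-equv-2}. Fix $\varepsilon\in(0,\frac12)$. By Theorem~\ref{thm:reg-u-psi} we have
\[
U=\sum_{p} c_2(p)\vph(d)d^{\frac32}+U_r, \qquad \psi=\sum_p c_1(p)\vph(d)d^{-\frac12}+\psi_r,
\]
with $U_r\in H^{\frac52+\varepsilon}(I)$ and $\psi_r\in H^{\frac12+\varepsilon}_{\overline I}(\R)$.

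\emph{Regularity of $U$ and $\psi$.} Since $\frac52+\varepsilon>\frac32+\frac12$, the embedding $H^{\frac52+\varepsilon}(I)\hookrightarrow C^{2,\varepsilon}(\overline I)$ gives $U_r\in C^2(\overline I)$. The singular term $d^{3/2}\vph(d)$ is $C^1$ up to the endpoints with derivative vanishing there, and it is $C^\infty$ in the interior. Hence $U\in C^2(I)\cap C^1(\overline I)$. Similarly $\psi_r\in H^{\frac12+\varepsilon}\hookrightarrow C^{0,\varepsilon}$, and $d^{-1/2}\vph(d)$ is smooth inside $I$, so $\psi\in C(I)$. Alternatively, interior continuity follows directly from $\psi|_K\in H^{\frac12+\varepsilon}(K)$ for every $K\Subset I$.

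\emph{Pointwise equations.} The weak equation \eqref{eqn:weak-eqn} with test functions $(\phi,0)$, $\phi\in C^\infty_c(I)$, says $-\partial_{ss}U+\psi=f$ in $\mathcal D'(I)$. Both sides are in fact continuous on $I$: $\partial_{ss}U$ and $\psi$ are continuous by the previous step, and $f\in H^{\frac12+\varepsilon}\subset C$. The distributional identity therefore holds pointwise. For the second equation, testing with $(0,\zeta)$ gives $V_S\psi-U=h$ as elements of $H^{\frac12}(S)$, i.e.\ almost everywhere. Lemma~\ref{lem:vs-reg} yields $V_S\psi\in H^{\frac12+\varepsilon}\subset C$, and $h\in H^{\frac32+\varepsilon}$ is continuous, so the identity holds everywhere. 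For the boundary condition, test with general $\phi\in H^1(I)$, integrate $\int \partial_sU\,\partial_s\phi$ by parts using $U\in C^1(\overline I)$ and $\partial_{ss}U=\psi-f\in L^1$ (note $d^{-1/2}$ is integrable). The interior terms cancel by the first equation, leaving $\sum_p(\bm\nu\cdot\nabla_\Gamma U(p)-g(p))\phi(p)=0$ for all $\phi$; choosing $\phi$ to separate the two endpoints gives \eqref{eqn:arc-pde-d}.

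\emph{Main obstacle.} The delicate step is justifying this integration by parts near the endpoints, where $\psi$ is only $L^1$ and the duality $\dual{\psi}{\phi}$ must be matched with $\int\psi\phi\,ds$. I would handle it by noting that $\psi\in L^p$ for $p<2$ by the decomposition. On such functions the $H^{-\frac12}_{\overline S}$–$H^1$ pairing coincides with the Lebesgue integral, which one checks by density of smooth $\phi$ and dominated convergence.
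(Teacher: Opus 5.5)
Your proposal is correct and follows essentially the same route as the paper: Sobolev embeddings applied to the regularity from Theorem~\ref{thm:reg-u-psi}, then Lemma~\ref{lem:vs-reg} for continuity of $V_S\psi$, then reading the equations off the weak formulation. The only differences are minor. For $C^1(\overline S)$ you use the explicit singular decomposition, whereas the paper uses $U\in H^{\frac32+\varepsilon}\hookrightarrow C^{1,\alpha}$. Your integration-by-parts derivation of the endpoint condition, with the pairing identified as $\int\psi\phi\,ds$ via $\psi\in L^p$, $p<2$, makes explicit a step the paper leaves implicit. For that identification, the cleaner argument is density of $C^\infty_c(I)$ in $H^{\frac12}(I)$ together with $H^{\frac12}\hookrightarrow L^q$ for $q<\infty$ and H\"older, rather than dominated convergence.
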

\begin{proof}
Since $H^s(\mathbb{R})\hookrightarrow C^{0,\alpha}(\mathbb{R})$ for 
$0<\alpha<s-\frac12$, Theorem~\ref{thm:reg-u-psi} implies that
$U\in C^{1,\alpha}(S)$. Moreover, for every compact subset $K\Subset S$,
$\left.U\right|_K\in C^{2,\alpha}(K)$ and 
$\left.\psi\right|_K\in C^{0,\alpha}(K)$
for any $\alpha\in(0,\varepsilon)$.
Consequently, $U\in C^2(S)\cap C^1(\overline S)$ and $\psi\in C(S)$.
In addition, by Lemma~\ref{lem:vs-reg}, we have
$V_S\psi\in H^{\frac12+\varepsilon}(S)\hookrightarrow C^{0,\alpha}(S)$.
Since the pair $(U,\psi)$ satisfies the weak formulation~\eqref{eqn:weak-eqn}, it follows that \eqref{eqn:arc-bie} is satisfied pointwise.
\end{proof}
\begin{proposition}\label{pde-pointwise}
If $f \in H^{\frac{1}{2}+\varepsilon}(S)$ and $g$ with $\norm{g}_{\partial S} < +\infty$ for $\varepsilon\in(0,\frac{1}{2})$, then $u=V_\Omega\psi\in C^2(\R^2\setminus \overline{S})\cap C(\R^2)$ and $U\in C^2(S)\cap C^1(\overline{S})$, and the equation~\eqref{eqn:arc-pde} is satisfied pointwise.
\end{proposition}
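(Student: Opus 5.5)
We set $h=0$ and apply Proposition~\ref{bie-pointwise}. This gives $U\in C^2(S)\cap C^1(\overline S)$ and $\psi\in C(S)$, and \eqref{eqn:arc-bie} holds pointwise with $V_S\psi=U$ on $S$. Put $u=V_\Omega\psi$. What remains is to check the regularity of $u$ and then verify \eqref{eqn:arc-pde-a}--\eqref{eqn:arc-pde-e} one by one.

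\emph{Harmonicity away from $\overline S$.} For $x\notin\overline S$ the kernel $\log|x-y|$ is smooth in $x$ and smooth in $y$ on a neighborhood of $\overline S$. The pairing with $\psi\in H^{-\frac12}_{\overline S}(\Gamma)$ can therefore be differentiated under the pairing. This gives $u\in C^\infty(\R^2\setminus\overline S)$ and $\Delta u=0$ there, which is \eqref{eqn:arc-pde-a}.

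\emph{Continuity on $\R^2$.} By Theorem~\ref{thm:reg-u-psi}, $\psi=\sum_p c_1(p)\vph(d)d^{-1/2}+\psi_r$ with $\psi_r\in H^{\frac12+\varepsilon}_{\overline S}(\Gamma)\subset L^\infty$, so $\psi\in L^p(\Gamma)$ for every $p<2$. Since $\log|x-\cdot|$ lies in $L^q(\Gamma)$ uniformly for $x$ in compact sets, for every $q<\infty$, the potential $V_\Omega\psi(x)=-\frac1{2\pi}\int_S\log|x-y|\psi(y)\,ds_y$ is an absolutely convergent integral. The map $x\mapsto\log|x-\cdot|$ is continuous into $L^q(\Gamma)$, because the logarithmic singularity is uniformly integrable. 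Hölder's inequality then gives $u\in C(\R^2)$. In particular the trace of $u$ on $S$ is its pointwise restriction, and it equals $V_S\psi=U$; this is \eqref{eqn:arc-pde-b}.

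\emph{Jump relation.} This is the main obstacle: we need the pointwise identity $\jump{\partial_{\bm n}u}=\psi$ on $S$, whereas McLean's theorem gives it only in $H^{-\frac12}$. Fix $x_0\in S$ and a cutoff $\chi$ supported in a small arc $K\Subset S$ around $x_0$. Split $\psi=\chi\psi+(1-\chi)\psi$.
\begin{itemize}
\item The potential of $(1-\chi)\psi$ is smooth near $x_0$, so it has no jump in normal derivative there.
\item On $K$, $\chi\psi$ is Hölder continuous, since $\psi|_K\in C^{0,\alpha}$ by Theorem~\ref{thm:reg-u-psi}. The classical jump relation for single-layer potentials with Hölder densities on a smooth curve then shows that the one-sided normal derivatives exist, converge uniformly along $x\mp\varepsilon\bm n$, and differ by $\chi\psi(x_0)=\psi(x_0)$.
\end{itemize}
Together with the pointwise equation $-\Delta_\Gamma U+\psi=f$, this gives \eqref{eqn:arc-pde-c}. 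Condition \eqref{eqn:arc-pde-d} is the pointwise boundary condition already provided by Proposition~\ref{bie-pointwise}, which is meaningful because $U\in C^1(\overline S)$.

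\emph{Decay.} Integrating \eqref{eqn:arc-bie-b} as in \eqref{eqn:int_psi} and using the compatibility condition \eqref{eqn:compatible} gives $\int_S\psi=0$. Then for large $|x|$,
\[
u(x)=-\frac1{2\pi}\int_S\big(\log|x-y|-\log|x|\big)\psi(y)\,ds_y .
\]
The bracket is $\mc O(|y|/|x|)$ uniformly in $y\in S$, and $\psi\in L^1(S)$, so $u=\mc O(|x|^{-1})$. This is \eqref{eqn:arc-pde-e} and completes the verification.
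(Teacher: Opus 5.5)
Your proof is correct and follows the paper's outline. Harmonicity comes from the smooth kernel off $\overline S$. Proposition~\ref{bie-pointwise} supplies \eqref{eqn:arc-pde-d} and $V_S\psi=U$. The jump relation \eqref{eqn:arc-pde-c} comes from a cutoff splitting and the classical jump relation for H\"older densities.

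The genuine difference is the continuity step. The paper uses only the global regularity $\psi\in H^{-\frac12+\varepsilon}_{\overline S}(\Gamma)$. It combines this with the mapping property of the single-layer potential into $H^{1+\varepsilon}_{loc}$ \cite[Theorem~6.12]{McLean2000} and the two-dimensional embedding $H^{1+\varepsilon}\hookrightarrow C^{0,\alpha}$. That gives H\"older continuity of $u$ directly and never touches the endpoint expansion.

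You instead use the finer expansion \eqref{eqn:decomp2} to get $\psi\in L^p(\Gamma)$ for $p<2$. You then pair it by H\"older's inequality with $\log|x-\cdot|$, using that this kernel depends continuously on $x$ in $L^{p'}(\Gamma)$. This is more elementary, since it needs no Sobolev mapping theorem for potentials, but it depends on the singular expansion. It also gives plain continuity rather than a H\"older modulus, which is all the statement asks for.

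Your step implicitly needs the expansion to hold as an identity of distributions on $\Gamma$, not just on the open arc $S$. This is true, because a nonzero distribution supported at $\partial S$ cannot lie in $H^{-\frac12+\varepsilon}$.

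Your decay argument, via $\int_S\psi=0$ and $\log|x-y|-\log|x|=\mc O(|y|/|x|)$, spells out what the paper only asserts in one line.
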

\begin{proof}
Since the kernel $\log|x-y|$ is smooth at $x\in\mathbb{R}^2\setminus\overline S$
for $y\in S$, it follows immediately that
$V_\Omega\psi\in C^2(\mathbb{R}^2\setminus\overline S)$, and the Laplace
equation~\eqref{eqn:arc-pde-a} is satisfied pointwise in
$\mathbb{R}^2\setminus\overline S$.
The decay condition~\eqref{eqn:arc-pde-e} follows from the compatibility
condition~\eqref{eqn:compatible}, while the boundary condition
\eqref{eqn:arc-pde-d} was established in the previous proposition.
Therefore, it remains to verify that~\eqref{eqn:arc-pde-b}--%
\eqref{eqn:arc-pde-c} hold pointwise.

By Theorem~\ref{thm:reg-u-psi} and \cite[Theorem~6.12]{McLean2000}, since
$\psi\in H_{\overline S}^{-\frac12+\varepsilon}(\Gamma)$ for some
$\varepsilon\in(0,\frac12)$, it follows that for any bounded open set
$O\subset\mathbb{R}^2$ and any cutoff function
$\chi\in C_c^\infty(\mathbb{R}^2)$ with $\mathrm{supp}\,\chi\subset O$, we have $
\chi V_\Omega\psi \in H^{1+\varepsilon}(O)\hookrightarrow C^{0,\alpha}(O)$ where $\alpha<\varepsilon.$
This implies that $V_\Omega\psi\in C(\mathbb{R}^2)$ and $\at{u}{S} = V_S\psi = U$ pointwise on $S$.

We now prove the jump relation
$\jump{\partial_{\bm n}V_\Omega\psi}=\psi$ pointwise on $S$.
Fix a point $x\in S$ and choose $r>0$ sufficiently small such that
$\mathrm{dist}(x,\partial S)\ge 2r$.
Let $\chi\in C_c^\infty(\mathbb{R}^2)$ satisfy
$\chi(y)\equiv1$ for $|x-y|\le r/2$ and $\chi(y)\equiv0$ for $|x-y|\ge r$.
Then $\mathrm{supp}(\chi\psi)\subset S$, and hence
$\chi\psi\in H_{\overline S}^{\frac12+\varepsilon}(\Gamma)
\hookrightarrow C^{0,\alpha}(\Gamma)$.
By \cite[Theorem~6.19]{kress2014}, we obtain $
\jump{\partial_{\bm n}\bigl(V_\Omega(\chi\psi)\bigr)}(x)=\psi(x).$
On the other hand, since $\log|x-y|(1-\chi(y))$ is smooth at $x$, it follows that
$V_\Omega\bigl((1-\chi)\psi\bigr)$ is smooth at $x$, and therefore $
\jump{\partial_{\bm n}V_\Omega((1-\chi)\psi)}(x)=0.$
Combining these results yields $
\jump{\partial_{\bm n}V_\Omega\psi}(x)=\psi(x),$
and consequently~\eqref{eqn:arc-pde-c} holds pointwise on $S$ due to Proposition~\ref{bie-pointwise}.

\end{proof}

\section{Finite element Approximation}\label{sec:fem}
In this section, we introduce a finite element discretization of the system~\eqref{eqn:weak-eqn} and prove an a priori error estimate.
\subsection{Discretization}
The preceding analysis shows that the boundary density $\psi$ exhibits the singular behavior $\psi \sim d^{-\frac{1}{2}}$ near an open endpoint, so that $\psi \in H^{-\frac{1}{2}+\varepsilon}_{\overline S}$ for some $\varepsilon\in\bigl[0,\frac{1}{2}\bigr).$
Hence, a conventional piecewise‐polynomial finite element space can achieve at best $\|\psi - \psi_h\|_{H^{-\frac{1}{2}}(\Gamma)} = O\bigl(h^\varepsilon\bigr).$
Similarly, the solution $U$ behaves like $
U \sim d^{\frac{3}{2}}$ near the endpoint, and satisfies $
U \in H^{\frac{3}{2}+\varepsilon}(\Omega)$ for some $\varepsilon\in\bigl[0,\frac{1}{2}\bigr),$ yielding at most $\|U - U_h\|_{H^1(\Omega)} = O\bigl(h^{\frac{1}{2}+\varepsilon}\bigr).$
Since these singular components are localized at the boundary while the remainder of the solution remains smooth, a more effective remedy is to enrich the finite element space by incorporating basis functions that capture the $d^{-\frac{1}{2}}$ and $d^{\frac{3}{2}}$ singularities explicitly.

Let $N$ be a positive integer and introduce a (quasi‐uniform) partition of $\overline{I}: -1 = x_0 < x_1 < \cdots < x_N = 1$ with mesh‐size $ h = \max_{1\le j\le N}(x_j - x_{j-1})$. Let $\mc T_h = \cup_{j=1}^N \overline{\sigma_j}$ be the computational mesh with $\sigma_j = (x_{j-1}, x_j)$.
First, we define the space of singular elements
\begin{equation}
    S^{q} =  \text{span}\bigl\{ (1+x)^{q}\vph(1+x), (1-x)^{q}\vph(1-x) \bigr\}, 
\end{equation}
where $\vph$ is a smooth cutoff function satisfying $\vph(x) = 1$ for $x \le 1/8$ and $\vph(x) = 0$ for $x \ge 1/4$. For example, we choose $\vph(x) = \eta(8x - 1)$ with
\begin{equation}
\eta(t)
= \frac{\phi(t)}{\phi(t) + \phi(1 - t)},\quad
\phi(t) =
\begin{cases}
\exp\bigl(-1/t\bigr), & t > 0,\\
0,                    & t \le 0,
\end{cases}.
\end{equation}
Let $V_h$ be the continuous piecewise-polynomial finite element space, defined as
\begin{equation}
    V_h = \{\wt v = v \circ\bm{X}^{-1}, v\in C(\overline{I}): \at{v}{\overline{\sigma_j}} \in \mathbb P_1(\overline{\sigma_j}) , j = 1,2,\cdots, N\},
\end{equation}
where $\mathbb P_1(\overline{\sigma_j})$ is the function space of polynomials on $\overline{\sigma_j}$ of degree no greater than $1$.
Define the finite element spaces for the solutions $U$ and $\psi$ respectively.
\begin{align}
    W^1_h &= \{\wt v\in C(\overline{S}): \wt v = v + w \circ \bm{X}^{-1}, v \in V_h, w\in S^{\frac{3}{2}} \},\\
    W^2_h &= \{\wt v\in H^{-\frac{1}{2}}_{\overline{S}}(\Gamma):\at{\wt v}{\overline{S}} =  v+ w \circ \bm{X}^{-1},v \in V_{h}, w\in S^{-\frac{1}{2}} \}.
\end{align}
Let $\mathbb H_h = W^1_h \times W^2_h$. It is evident that we have the inclusions $W^1_h\subset H^1(I)$, $W^2_h\subset H^{-\frac{1}{2}}_{\overline{I}}(R)$ and $\mathbb H_h\subset\mathbb H$.
The Galerkin finite element method is formulated as finding $\bm u_h = (U_h, \psi_h)\in \mathbb H_h$, such that
\begin{equation}\label{eqn:Galerkin}
   a(\bm u_h, \bm v_h) = b(\bm v_h), \quad \forall \bm v_h = (\phi, \zeta)\in \mathbb H_h.
\end{equation}

\subsection{A priori error estimate}
Let $\pi_{L^2}: L^2(S) \to V_h$ be the $L^2$ projection operator such that for $u \in  L^2(S)$, it holds that
\begin{equation}
    \inprod{u - \pi_{L^2}u}{v_h}_{L^2(S)} = 0,\quad \forall v_h\in V_h.
\end{equation}
We \rev{first} state the property of the $L^2$ projection operator for approximating functions in $H^s(S)$, $s \ge 0$ with a piecewise linear finite element space. \rev{The following result follows from \cite[Proposition~1.134]{ErnGuermond2004} together with the Sobolev interpolation~\cite[Appendix~B]{McLean2000}.}
\begin{lemma}\label{lem:l2-error}    
    Let \rev{$t\in[0,1]$, $s\ge t$,} and $u\in H^s(S)$. There \rev{exists} $C>0$ independent of $h$ such that     \begin{equation}
        \norm{u - \pi_{L^2} u}_{H^t(\Gamma)} \le C h^{\min(1,s-t)} \norm{u}_{H^s(\Gamma)}.
    \end{equation}
\end{lemma}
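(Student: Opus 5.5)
The plan is to pull the problem back to the parameter interval $I$ via the arc-length parametrization, prove the estimate there, and interpolate between integer endpoints. By Lemma~\ref{lem:norm-equv-2} (with $\bm X$ smooth), $\norm{u}_{H^s(S)}$ and $\norm{u\circ\bm X}_{H^s(I)}$ are equivalent. Since $V_h$ is defined through pull-back of piecewise linears, the $L^2(S)$ projection corresponds to the $L^2(I)$ projection onto $\mathbb P_1$ elements. With arc-length parametrization $|\bm X'|=1$ the measures coincide, so this correspondence is exact. The norms on $\Gamma$ in the statement are read as norms on $S$ of the restricted functions.

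The first step is the integer cases on $I$. For $t=0$ and $s\in\{0,1,2\}$, the $L^2$ projection is the best $L^2$ approximation. Hence $\norm{u-\pi_{L^2}u}_{L^2}\le\norm{u-\Pi_h u}_{L^2}$ for any quasi-interpolant $\Pi_h$ (e.g. Scott--Zhang or nodal interpolation for $s\ge1$). Standard estimates then give $Ch^{s}\norm{u}_{H^s}$ for $s\le2$; for $s=0$ the bound is trivially $\norm{u}_{L^2}$.

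For $t=1$ and $s\in\{1,2\}$, I would invoke \cite[Proposition~1.134]{ErnGuermond2004}. This gives $H^1$-stability of $\pi_{L^2}$ on quasi-uniform meshes, via an inverse estimate. Writing $u-\pi_{L^2}u=(u-\Pi_h u)-\pi_{L^2}(u-\Pi_h u)$, stability then yields $\norm{u-\pi_{L^2}u}_{H^1}\le Ch^{s-1}\norm{u}_{H^s}$.

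The second step is to interpolate. Fix $t$. The operator $\mathcal I-\pi_{L^2}$ is bounded $H^{s}\to H^{0}$ with norm $Ch^{\min(s,2)}$ and $H^s\to H^1$ with norm $Ch^{\min(s-1,1)}$. I would first interpolate in $s$ between $0$ and $2$, and between $1$ and $2$, to obtain all real $s$ at the endpoints $t=0$ and $t=1$. Then I would interpolate in $t$ between $0$ and $1$ with $s$ fixed, using \cite[Appendix~B]{McLean2000}. The operator norms combine as $h^{(1-t)a+tb}$, which produces $h^{\min(2-t,\,s-t)}$. Since $\min(1,s-t)\le\min(2-t,s-t)$ and $h\le1$, this gives the claimed bound with exponent $\min(1,s-t)$. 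For $s>2$ I would simply embed $H^s\hookrightarrow H^2$.

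The main obstacle is the $H^1$-stability of $\pi_{L^2}$, which is where quasi-uniformity is essential; this is exactly the content taken from Ern--Guermond. A secondary technical point is interpolating with $s\ge t$ when $s$ and $t$ both vary. I handle this by fixing $s$ and interpolating the target space only, after having established the two endpoint estimates for every real $s\ge0$ resp. $s\ge1$. The case $t=1$, $s\in[0,1)$ is excluded by the hypothesis $s\ge t$.
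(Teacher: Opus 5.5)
Your route is the paper's own. The paper cites \cite[Proposition~1.134]{ErnGuermond2004} for the integer-order estimates and the $H^1$-stability of $\pi_{L^2}$ on quasi-uniform meshes, then invokes Sobolev interpolation. Your pull-back to $I$ and your integer endpoint cases are fine.

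The interpolation step, however, does not cover everything the lemma claims. You fix $s$ and interpolate only in the target, between $t=0$ and $t=1$. But the $t=1$ endpoint $H^s\to H^1$ is available only for $s\ge 1$. So the whole range $0<t\le s<1$ is left unproved. Your closing remark that only $t=1$, $s<1$ is excluded overlooks this: for example, $t=\tfrac14$, $s=\tfrac12$ satisfies $s\ge t$ and is part of the statement.

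The missing ingredient is interpolation in the source space as well.

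\begin{itemize}
\item Interpolating the bound $\norm{u-\pi_{L^2}u}_{L^2}\le\norm{u}_{L^2}$ with the $H^1$-stability gives $\norm{u-\pi_{L^2}u}_{H^r}\le C\norm{u}_{H^r}$ for all $r\in[0,1]$.
\item Your own target interpolation at $s=2$ gives $\norm{u-\pi_{L^2}u}_{H^t}\le Ch^{2-t}\norm{u}_{H^2}$.
\item For fixed $t\in[0,1]$, interpolate $\mathcal{I}-\pi_{L^2}$ between $H^t\to H^t$ (norm $C$) and $H^2\to H^t$ (norm $Ch^{2-t}$), using $[H^t,H^2]_\theta=H^{t+\theta(2-t)}$. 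This yields the bound $Ch^{s-t}$ for every $s\in[t,2]$, covering all cases uniformly; $s>2$ follows by embedding, as you say.
\end{itemize}

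Equivalently, for $s<1$ you can interpolate the target between $H^s\to L^2$ (norm $Ch^s$) and $H^s\to H^s$ (norm $C$), using the reiteration identity $[L^2,H^s]_{t/s}=H^t$.
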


The following result is for the approximation of $L^2$ functions in negative norms.
\begin{lemma}\label{lem:negative-norm}    
Let $t \in (0,1]$. For every $u \in L^2_{\overline{S}}(\Gamma)$, there exists $C>0$ such that
\begin{equation}
    \inf_{v_h\in V_h }\norm{u - v_h}_{H^{-t}_{\overline{S}}(\Gamma)}\le C h^{t} \norm{u - \pi_{L^2}u}_{L^2_{\overline{S}}(\Gamma)}.
\end{equation}
\end{lemma}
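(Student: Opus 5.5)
The plan is a standard duality (Aubin--Nitsche type) argument, using $v_h=\pi_{L^2}u$ as the competitor. Note that $\pi_{L^2}u$ is only defined on $S$, so it is extended by zero to $\Gamma$. Since functions in $V_h$ are continuous on $\overline S$ but need not vanish at $\partial S$, this extension lies in $L^2_{\overline S}(\Gamma)\subset H^{-t}_{\overline S}(\Gamma)$ for $t\in(0,1]$. It is therefore an admissible element of the space in which the infimum is taken, and the infimum is bounded by $\norm{u-\pi_{L^2}u}_{H^{-t}_{\overline S}(\Gamma)}$.

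To estimate this negative norm, I will use the duality $H^{-t}_{\overline S}(\Gamma)\simeq [H^t(S)]'$ established in Section~\ref{sec:prep}, transferred to $\Gamma$ and $S$ through Lemma~\ref{lem:norm-equv-2}. Writing $e=u-\pi_{L^2}u\in L^2_{\overline S}(\Gamma)$, this gives
\begin{equation*}
\norm{e}_{H^{-t}_{\overline S}(\Gamma)}\le C\sup_{0\neq w\in H^t(S)}\frac{\inprod{e}{w}_{L^2(S)}}{\norm{w}_{H^t(S)}}.
\end{equation*}
Galerkin orthogonality of the $L^2$ projection gives $\inprod{e}{w}_{L^2(S)}=\inprod{e}{w-\pi_{L^2}w}_{L^2(S)}$. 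Cauchy--Schwarz then bounds this by $\norm{e}_{L^2(S)}\norm{w-\pi_{L^2}w}_{L^2(S)}$.

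Lemma~\ref{lem:l2-error} with target index $0$ and regularity index $t$ yields $\norm{w-\pi_{L^2}w}_{L^2(S)}\le Ch^{t}\norm{w}_{H^t(S)}$; here $\min(1,t)=t$ because $t\le 1$. Dividing by $\norm{w}_{H^t(S)}$ and taking the supremum gives $\norm{e}_{H^{-t}_{\overline S}(\Gamma)}\le Ch^t\norm{e}_{L^2(S)}$. Since $e$ is supported in $\overline S$, its $L^2(S)$ and $L^2_{\overline S}(\Gamma)$ norms agree, which is the claimed bound.

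The main obstacle is getting the function spaces exactly right. First, Lemma~\ref{lem:l2-error} is stated with $\Gamma$-norms, but it is applied here to $w\in H^t(S)$ with $L^2(S)$ on the left. I will read it on $S$, or extend $w$ to $\Gamma$ with a bounded extension, since only the restriction to $S$ enters. Second, the duality identification must be with $H^t(S)$ rather than a space of functions vanishing at $\partial S$. This is what allows arbitrary $w$, including those not vanishing at the endpoints, and it is exactly the pairing $H^{-s}_{\overline\Omega}\simeq[H^s(\Omega)]'$ recorded earlier. The constant is uniform in $h$ because of quasi-uniformity, which enters only through Lemma~\ref{lem:l2-error}.
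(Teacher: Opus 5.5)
Your proposal is correct and follows the same route as the paper. Both take $v_h=\pi_{L^2}u$ and dualize against $H^t(S)$, then insert $w-\pi_{L^2}w$ by $L^2$-orthogonality, apply Cauchy--Schwarz, and finish with Lemma~\ref{lem:l2-error} at target index $0$. Your remarks on zero extension of $\pi_{L^2}u$ and on reading Lemma~\ref{lem:l2-error} on $S$ rather than $\Gamma$ are details the paper leaves implicit.
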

\begin{proof}
\rev{Using the dual characterization of \(H^{-t}_{\overline S}(\Gamma)\), the \(L^2\)-orthogonality of the projection \(\pi_{L^2}\), Cauchy--Schwarz, and} Lemma~\ref{lem:l2-error}, we obtain
\begin{equation}
\begin{aligned}
    &\inf_{v_h\in V_h }\norm{u - v_h}_{H^{-t}_{\overline{S}}(\Gamma)} \le \norm{u - \pi_{L^2}u}_{H^{-t}_{\overline{S}}(\Gamma)} = \sup_{\norm{w}_{H^{t}(S)}=1} \inprod{u - \pi_{L^2}u}{w}_{L^2(S)} \\
    & = \sup_{\norm{w}_{H^{t}(S)}=1} \inprod{u - \pi_{L^2}u}{w - \pi_{L^2}w}_{L^2(S)} \\
    & \le \sup_{\norm{w}_{H^{t}(S)}=1} \norm{u - \pi_{L^2}u}_{L^2_{\overline{S}}(\Gamma)}\norm{w - \pi_{L^2}w}_{L^2(S)} \le C h^{t} \norm{u - \pi_{L^2}u}_{L^2_{\overline{S}}(\Gamma)}. 
\end{aligned}
\end{equation}
\end{proof}

The following lemma states the optimal approximation property of the finite element space $\mathbb H_h$.
\rev{In this subsection, \(U_r\) and \(\psi_r\) always denote the regular remainders in the decomposition~\eqref{eqn:decomp2}.}
\begin{lemma}\label{thm:interp}
Let  $\bm u = (U,\psi)\in\mathbb H$ be the solution that admits the decomposition \eqref{eqn:decomp2}. For every $s\in [1,\frac{5}{2}+\varepsilon]$, $t\in [0, \frac{1}{2}+\varepsilon]$, there exists $C>0$ independent of $h$, such that
\begin{equation}
    \inf_{\bm v_h\in \mathbb H_h}\norm{\bm u - \bm v_h}_{\mathbb H} \le C\paren{ h^{\min(1,s-1)}\norm{U_r}_{H^{s}(S)} + h^{\frac{1}{2}+t} \norm{\psi_r}_{H^{t}_{\overline{S}}(\Gamma)}  }.
    \end{equation}
\end{lemma}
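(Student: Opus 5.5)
The plan is to build a test pair $\bm v_h=(\phi_h,\zeta_h)\in\mathbb H_h$ that reproduces the singular parts exactly and approximates only the regular remainders. By \eqref{eqn:decomp2}, $U=U_s+U_r$ with $U_s=\sum_p c_2(p)\vph(d)d^{\frac32}$, and $\psi=\psi_s+\psi_r$ with $\psi_s=\sum_p c_1(p)\vph(d)d^{-\frac12}$. Pulled back to $I$ via the arclength parametrization $\bm X$, we have $d=1\pm x$ near $\mp1$. After matching the cutoff, $U_s\circ\bm X\in S^{\frac32}$ and $\psi_s\circ\bm X\in S^{-\frac12}$. Here I will assume the cutoff in \eqref{eqn:decomp2} is taken to be the same $\vph$ used in the definition of $S^q$. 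This costs nothing: a change of cutoff alters the singular term only by a smooth function, compactly supported away from the endpoint's singular behaviour, and that function can be absorbed into $U_r$ and $\psi_r$ without changing their regularity class.

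We then set $\phi_h=U_s+v_h^1$ and $\zeta_h=\psi_s+v_h^2$ with $v_h^1,v_h^2\in V_h$. Since $\norm{\bm u-\bm v_h}_{\mathbb H}\le \norm{U_r-v_h^1}_{H^1(S)}+\norm{\psi_r-v_h^2}_{H^{-\frac12}_{\overline S}(\Gamma)}$, it suffices to bound two infima over $V_h$. Lemma~\ref{lem:norm-equv-2} lets us work on $I$ throughout.

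For the first infimum we take $v_h^1=\pi_{L^2}U_r$ and apply Lemma~\ref{lem:l2-error} with $t=1$. This gives $\norm{U_r-\pi_{L^2}U_r}_{H^1}\le Ch^{\min(1,s-1)}\norm{U_r}_{H^s}$ for $s\ge1$. The interpolation is needed for the range $s\in[1,\frac52+\varepsilon]$, and the bound saturates at $s=2$. Alternatively, one could use the nodal interpolant together with interpolation between $s=1$ and $s=2$.

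The second infimum is the step I expect to be the main obstacle, because the target is the negative norm $H^{-\frac12}_{\overline S}$ while the available regularity is $\psi_r\in H^t_{\overline S}$. The aim is the rate $h^{\frac12+t}$. I would take $v_h^2=\pi_{L^2}\psi_r$, viewed as an element supported in $\overline S$ (the pieces of $V_h$ are defined on $\overline I$ and are extended by zero). The duality argument in Lemma~\ref{lem:negative-norm} with exponent $\frac12$ then gives
\[
\norm{\psi_r-\pi_{L^2}\psi_r}_{H^{-\frac12}_{\overline S}}\le Ch^{\frac12}\norm{\psi_r-\pi_{L^2}\psi_r}_{L^2}.
\]
Lemma~\ref{lem:l2-error} with $t=0$ and regularity index $t$ then gives $\norm{\psi_r-\pi_{L^2}\psi_r}_{L^2}\le Ch^{t}\norm{\psi_r}_{H^t}$, and we use $\norm{\psi_r}_{H^t(S)}\le\norm{\psi_r}_{H^t_{\overline S}(\Gamma)}$.

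The delicate point is that the duality pairing must be with $w\in H^{\frac12}(S)$. Lemma~\ref{lem:l2-error} applied to $w$ with $s=\frac12$ supplies exactly the factor $h^{\frac12}$, so no boundary condition on $w$ is needed. This is why the $L^2$ projection, rather than an interpolant vanishing at the endpoints, is the correct choice. One must also check that the $L^2$-orthogonality $\inprod{\psi_r-\pi_{L^2}\psi_r}{\pi_{L^2}w}=0$ holds with $\pi_{L^2}$ defined on $S$, which is immediate from the definition. Since $t\le\frac12+\varepsilon<1$, the exponent $\min(1,t)$ equals $t$. Summing the two bounds then gives the stated estimate.
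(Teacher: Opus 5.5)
Your proposal is correct and follows the paper's proof essentially step for step. You remove the singular parts, which lie in $W_h^1$ and $W_h^2$, then bound $U_r$ with the $L^2$ projection and Lemma~\ref{lem:l2-error} at $t=1$, and bound $\psi_r$ with Lemma~\ref{lem:negative-norm} at exponent $\frac12$ followed by Lemma~\ref{lem:l2-error} at $t=0$. Your explicit choice of the cutoff in \eqref{eqn:decomp2} to match the one in $S^{q}$ makes precise an assumption the paper uses silently; note that changing cutoffs preserves the regularity class but adds terms of size $|c_1(p)|,|c_2(p)|$ to the remainder norms, so the estimate as stated holds for that matched decomposition.
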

\begin{proof}
By Lemma~\ref{lem:l2-error} and~\ref{lem:negative-norm}, we have
\begin{equation}
\begin{aligned}
    & \inf_{\bm v_h\in \mathbb H_h}\norm{\bm u - \bm v_h}_{\mathbb H} \le \inf_{\phi_h\in W_h^1}\norm{U - \phi_h}_{H^1(S)} + \inf_{\zeta_h\in W_h^2} \norm{\psi - \zeta_h}_{H^{-\frac{1}{2}}_{\overline{S}}(\Gamma)} \\
    &\le \inf_{\phi_h\in V_h}\norm{U_r - \phi_h}_{H^1(S)} + \inf_{\zeta_h\in V_h} \norm{\psi_r - \zeta_h}_{H^{-\frac{1}{2}}_{\overline{S}}(\Gamma)} \\
    &\le \norm{U_r - \pi_{L^2} U_r}_{H^1(S)} + C h^{\frac{1}{2}} \norm{\psi_r - \pi_{L^2}\psi_r}_{L^2_{\overline{S}}(\Gamma)} \\
    &\le C\paren{ h^{\min(1,s-1)}\norm{U_r}_{H^{s}(S)} + h^{\frac{1}{2}+t} \norm{\psi_r}_{H^{t}_{\overline{S}}(\Gamma)}  }.
\end{aligned}
\end{equation}
\end{proof}

\begin{lemma}\label{lem:duality}
    Let $\bm u = (U,\psi)\in\mathbb H$ be the exact solution to \eqref{eqn:weak-eqn}. Let $\bm u_h=(U_h,\psi_h)\in\mathbb H_h$ be the numerical solution. For $\sigma\in (0, \frac{1}{2})$, there holds the error estimate in lower norms
    \begin{equation}
        \norm{U-U_h}_{L^2(S)} + \norm{\psi-\psi_h}_{H^{-\frac{1}{2}-\sigma}_{\overline{S}}(\Gamma)} \le C h^\sigma \paren{\norm{\psi-\psi_h}_{H^{-\frac{1}{2}}_{\overline{S}}(\Gamma)} + \norm{U-U_h}_{H^1(S)}}.
    \end{equation}
\end{lemma}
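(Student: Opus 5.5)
This is an Aubin--Nitsche duality argument. Write $\bm e = (e_U, e_\psi) = (U - U_h, \psi - \psi_h)$. Galerkin orthogonality gives $a(\bm e, \bm v_h) = 0$ for all $\bm v_h \in \mathbb H_h$.

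The norms on the left-hand side are dual norms: $\norm{e_U}_{L^2(S)}$ is a supremum over $L^2(S)$ data, and $\norm{e_\psi}_{H^{-\frac12-\sigma}_{\overline S}(\Gamma)}$ is a supremum over $\zeta$-data in $H^{\frac12+\sigma}(S)$. So I would fix $(F, G) \in L^2(S) \times H^{\frac12+\sigma}(S)$ and solve the adjoint problem: find $\bm w = (W, \chi) \in \mathbb H$ with
\begin{equation*}
a(\bm v, \bm w) = \inprod{\phi}{F}_{L^2(S)} + \dual{\zeta}{G}_{L^2(S)}, \qquad \forall \bm v = (\phi, \zeta) \in \mathbb H.
\end{equation*}
The bilinear form is symmetric except for the sign of the coupling terms, so the adjoint form is just $a$ with $(W, \chi)$ replaced by $(-W, \chi)$, up to the sign of the data. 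Hence the adjoint problem is again of the form \eqref{eqn:arc-bie}, with $f = \pm F$, $h = \pm G$ and $g = 0$. Theorem~\ref{thm:solvability} gives existence.

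For regularity I would apply Theorem~\ref{thm:global} with $\varepsilon = \sigma$. Since $F \in L^2 \subset H^{-\frac12+\sigma}$ and $G \in H^{\frac12+\sigma}$, this yields
\begin{equation*}
\norm{W}_{H^{\frac32+\sigma}(S)} + \norm{\chi}_{H^{-\frac12+\sigma}_{\overline S}(\Gamma)} \le C\paren{\norm{F}_{L^2(S)} + \norm{G}_{H^{\frac12+\sigma}(S)}}.
\end{equation*}

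Next, test the adjoint problem with $\bm v = \bm e$ and use Galerkin orthogonality with any $\bm w_h \in \mathbb H_h$. With the continuity from Lemma~\ref{lem:ab-conti}, this gives
\begin{equation*}
\inprod{e_U}{F} + \dual{e_\psi}{G} = a(\bm e, \bm w - \bm w_h) \le C \norm{\bm e}_{\mathbb H} \inf_{\bm w_h} \norm{\bm w - \bm w_h}_{\mathbb H}.
\end{equation*}
It then suffices to show that the infimum is at most $C h^\sigma (\norm{F}_{L^2(S)} + \norm{G}_{H^{\frac12+\sigma}(S)})$. Taking suprema over normalized $(F, G)$ gives the claim. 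If $(F, G)$ is taken only in a dense subspace, I would pass to the closure at the end.

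The main obstacle is this approximation step, because $\bm w$ has only the unsplit regularity, $W \in H^{\frac32+\sigma}$ and $\chi \in H^{-\frac12+\sigma}$, with no singular decomposition. Lemma~\ref{thm:interp} is not directly applicable as stated. Instead I would use the polynomial part $V_h \subset W_h^1, W_h^2$ directly.

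For $W$, Lemma~\ref{lem:l2-error} with $t = 1$ and $s = \frac32 + \sigma$ gives $h^{\frac12+\sigma} \ge h^\sigma$ up to constants.

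For $\chi$ I need
\begin{equation*}
\inf_{v_h \in V_h} \norm{\chi - v_h}_{H^{-\frac12}} \le C h^\sigma \norm{\chi}_{H^{-\frac12+\sigma}}.
\end{equation*}
I would prove this by interpolation between two endpoint estimates. The first is the trivial bound $\norm{\chi - \pi\chi}_{H^{-\frac12}} \le C \norm{\chi}_{H^{-\frac12}}$, which requires stability of a projection on $H^{-\frac12}_{\overline S}$. For this I would use the $H^{-\frac12}$-orthogonal projection onto $V_h$, which is trivially stable. The second is the bound $C h^{\frac12} \norm{\chi}_{L^2}$ from Lemma~\ref{lem:negative-norm}. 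The $K$-functional, i.e.\ splitting $\chi = \chi_0 + \chi_1$, then gives the $h^\sigma$ rate for $\chi \in [H^{-\frac12}_{\overline S}, L^2]_{2\sigma} = H^{-\frac12+\sigma}_{\overline S}$ with $\sigma < \frac12$.

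Alternatively, one could use the decomposition \eqref{eqn:decomp2} of Theorem~\ref{thm:reg-u-psi} if $F$, $G$ are smoother, but the interpolation route is cleaner and avoids extra smoothness assumptions.
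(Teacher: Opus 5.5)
Your proposal is correct and follows the paper's proof. Both are an Aubin--Nitsche duality argument that takes the adjoint solution's regularity from Theorem~\ref{thm:global} and approximates it using only the polynomial subspace $V_h$, without the singular enrichment. Two of your steps spell out what the paper only asserts: you identify the adjoint problem explicitly as the primal problem with data $f=-F$, $h=G$, $g=0$, and you use a $K$-functional interpolation between the trivial $H^{-\frac12}$ bound and the $h^{\frac12}$ bound of Lemma~\ref{lem:negative-norm}. The latter is needed because the paper applies Lemma~\ref{lem:negative-norm} directly to a density lying only in $H^{-\frac12+\sigma}_{\overline S}(\Gamma)$, which is not contained in $L^2$.
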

\begin{proof}
    Let $\mc A'$ be the adjoint of $\mc A$, defined as
    \begin{equation}
        \dual{\mc A'\bm u}{\bm v} = \dual{\bm u}{\mc A\bm v} = a(\bm v, \bm u),\quad \forall \bm u, \bm v\in\mathbb H.
    \end{equation}
    It is clear that $(\mc A')^{-1}$ has the same regularity properties as $(\mc A)^{-1}$. 
    Let the norm \rev{$\norm{\cdot}_{Z}$} be 
    \begin{equation}
        \norm{\bm u}_Z = \norm{u_1}_{L^2_{\overline{S}}(\Gamma)} + \norm{u_2}_{H^{\frac{1}{2}+\sigma}(S)}.
    \end{equation}
    Then $Z =L^2_{\overline{S}}(\Gamma)\times H^{\frac{1}{2}+\sigma}(S) \subset \mathbb H'$.
For every $\bm w\in \mathbb H'$ and an arbitrary $\bm v_h\in\mathbb H_h$, we have
\begin{equation}
\begin{aligned}
    &|\inprod{\bm u - \bm u_h}{\bm w}_{L^2}| = | \dual{\mc A(\bm u - \bm u_h)}{(\mc A')^{-1}\bm w}_{L^2} | =| \dual{\mc A(\bm u - \bm u_h)}{(\mc A')^{-1}\bm w - \bm v_h}_{L^2} | \\
    &\le \norm{\mc A(\bm u - \bm u_h)}_{\mathbb H'} \norm{(\mc A')^{-1}\bm w - \bm v_h}_{\mathbb H}  \le  C \norm{\bm u - \bm u_h}_{\mathbb H} \norm{(\mc A')^{-1}\bm w - \bm v_h}_{\mathbb H} .
\end{aligned}
\end{equation}
\rev{It remains to estimate the best approximation of the adjoint solution in \(\mathbb H_h\). Let \((\phi_1,\phi_2)=(\mc A')^{-1}\bm w\).}
\rev{Using the product structure \(\mathbb H_h=W_h^1\times W_h^2\), the inclusions \(V_h\subset W_h^1\) and \(V_h\subset W_h^2\) obtained by setting the singular enrichment coefficients to zero, the \(L^2\)-projection in each component, Lemma~\ref{lem:l2-error} for \(\phi_1\), Lemma~\ref{lem:negative-norm} for \(\phi_2\), and the adjoint regularity estimate together with the definition of \(\norm{\cdot}_Z\), we obtain}
\begin{equation}
\begin{aligned}
    & \inf_{\bm v_h\in\mathbb H_h} \norm{(\mc A')^{-1}\bm w - \bm v_h}_{\mathbb H} \le \inf_{v_h \in W_h^1}\norm{\phi_1 - v_h}_{H^1(S)} + \inf_{v_h\in W_h^2}\norm{\phi_2 - v_h}_{H^{-\frac{1}{2}}_{\overline{S}}(\Gamma)} \\
    &\le \inf_{v_h \in V_h}\norm{\phi_1 - v_h}_{H^1(S)} + \inf_{v_h\in V_h}\norm{\phi_2 - v_h}_{ H^{-\frac{1}{2}}_{\overline{S}} (\Gamma)}  \\
    &\le \norm{\phi_1 - \pi_{L^2}\phi_1}_{H^1(S)} + \norm{\phi_2 - \pi_{L^2}\phi_2}_{H^{-\frac{1}{2}}_{\overline{S}} (\Gamma)} \le C h^\sigma \paren{  \norm{\phi_1}_{H^{1+\sigma}(S)} + \norm{\phi_2}_{H^{-\frac{1}{2}+\sigma}_{\overline{S}}(\Gamma)} } \\
    &\le C h^\sigma\paren{ \norm{w_1}_{H^{-\frac{1}{2}+\sigma}_{\overline{S}}(\Gamma)} + \norm{w_2}_{H^{\frac{1}{2}+\sigma}(S)} }  \le C h^\sigma \norm{\bm w}_Z.
\end{aligned}
\end{equation}
Therefore, we have
\begin{equation}
    |\inprod{\bm u - \bm u_h}{\bm w}_{L^2}| \le C \inf_{\bm v_h\in\mathbb H_h}\norm{\bm u - \bm u_h}_{\mathbb H} \norm{(\mc A')^{-1}\bm w - \bm v_h}_{\mathbb H} \le C h^\sigma \norm{\bm w}_{\rev{Z}}\norm{\bm u - \bm u_h}_{\mathbb H}.
\end{equation}
Moreover,
\begin{equation}
    \norm{U-U_h}_{L^2(S)} + \norm{\psi-\psi_h}_{H^{-\frac{1}{2}-\sigma}_{\overline{S}}(\Gamma)} \le \sup_{\bm w \neq 0}\frac{|\inprod{\bm u-\bm u_h}{\bm w}_{L^2}|}{\norm{\bm w}_{\rev{Z}}} \le C h^\sigma \norm{\bm u-\bm u_h}_{\mathbb H}.
\end{equation}
\end{proof}
Since the problem~\eqref{eqn:weak-eqn} satisfies the G\aa rding inequality~\eqref{eqn:garding}, the solvability and convergence of the finite element approximation~\eqref{eqn:Galerkin} are also guaranteed, and we have the following theorem.
\begin{theorem}
    Given $f\in H^{-1}_{\overline{S}}(\Gamma)$ and $g$ with $\norm{g}_{\partial S}<+\infty$, there exists $h_0> 0$ such that for any $h$, $0 < h < h_0$, the problem~\eqref{eqn:Galerkin} has a unique solution $\bm u_h = (U_h, \psi_h)\in \mathbb H_h$. Moreover, if $f\in H^{\frac{1}{2}+\varepsilon}_{\overline{S}}(\Gamma)$, then we have the error estimate
    \begin{equation}
        \norm{\bm u - \bm u_h}_{\mathbb H}  \le C h \paren{\norm{f}_{H^{\frac{1}{2}+\varepsilon}_{\overline{S}}(\Gamma)} + \norm{g}_{\partial S}}.
    \end{equation}
\end{theorem}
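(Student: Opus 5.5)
The proof splits into two parts: discrete solvability with quasi-optimality, then the rate. For the first part I will use the standard argument for Galerkin approximations of operators satisfying a G\aa rding inequality. By Theorem~\ref{thm:solvability}, $\mc A:\mathbb H\to\mathbb H'$ is an isomorphism, $\mc A+\mc B$ is coercive, and $\mc B$ is compact. The spaces $\mathbb H_h$ are nested in the sense that they contain $V_h\times V_h$. They are also asymptotically dense in $\mathbb H$: by Lemma~\ref{lem:l2-error} and Lemma~\ref{lem:negative-norm}, $\inf_{\bm v_h}\norm{\bm v-\bm v_h}_{\mathbb H}\to0$ for smooth $\bm v$, and smooth pairs are dense in $\mathbb H$. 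The classical theorem (Schatz's argument, or the discrete inf-sup lemma for compact perturbations of coercive operators) then gives $h_0>0$ such that for $h<h_0$ the discrete inf-sup constant is bounded below uniformly. Hence \eqref{eqn:Galerkin} is uniquely solvable and satisfies the quasi-optimal (C\'ea-type) bound $\norm{\bm u-\bm u_h}_{\mathbb H}\le C\inf_{\bm v_h\in\mathbb H_h}\norm{\bm u-\bm v_h}_{\mathbb H}$.

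If I write the Schatz argument out explicitly, I test the error equation with the adjoint problem, in the manner of Lemma~\ref{lem:duality}. The G\aa rding inequality gives
\[
C\norm{\bm e}_{\mathbb H}^2\le \dual{(\mc A+\mc B)\bm e}{\bm e}_{L^2(S)}=\dual{\mc A\bm e}{\bm u-\bm v_h}_{L^2(S)}+\dual{\mc B\bm e}{\bm e}_{L^2(S)},
\]
where $\bm e=\bm u-\bm u_h$. The compact term is bounded by $\norm{\bm e}$ in the weaker norm $L^2(S)\times H^{-\frac12-\sigma}_{\overline S}(\Gamma)$, using the estimate $\norm{\mc B\bm u}\le C(\norm{U}_{L^2}+\norm{\psi}_{H^{-1/2-\varepsilon}})$ from the proof of Theorem~\ref{thm:solvability}. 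Lemma~\ref{lem:duality} then bounds this by $Ch^\sigma\norm{\bm e}_{\mathbb H}^2$. For $h$ small this term is absorbed into the left-hand side, which gives quasi-optimality. Uniqueness of the discrete solution follows by applying the same estimate to the homogeneous problem, and existence follows because the discrete system is square. The delicate point is that Lemma~\ref{lem:duality} is stated for the Galerkin solution. I will check that its proof uses only Galerkin orthogonality, so that it applies to any $\bm u_h$ satisfying \eqref{eqn:Galerkin}.

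For the rate, I take $h=0$ in \eqref{eqn:arc-bie} and $f\in H^{\frac12+\varepsilon}_{\overline S}(\Gamma)$. Theorem~\ref{thm:reg-u-psi} gives the decomposition \eqref{eqn:decomp2}, and because of the enrichment the singular parts $c_2(p)\vph(d)d^{3/2}$ and $c_1(p)\vph(d)d^{-1/2}$ lie exactly in $W_h^1$ and $W_h^2$. Lemma~\ref{thm:interp} with $s=2$ and $t=\frac12$ (admissible when $\varepsilon\ge0$) then gives $\inf\norm{\bm u-\bm v_h}_{\mathbb H}\le Ch(\norm{U_r}_{H^2(S)}+\norm{\psi_r}_{H^{1/2}_{\overline S}(\Gamma)})$.

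The main obstacle is bounding $\norm{U_r}_{H^2}+\norm{\psi_r}_{H^{1/2}}$ by the data, because Theorem~\ref{thm:reg-u-psi} is stated qualitatively. I plan to make it quantitative by tracking the constants in the proofs. The coefficient $c$ in Lemma~\ref{lem:psi_singular} is bounded by $\norm{f_1}$ through the bound on $c_k$. Lemma~\ref{lem:U_singular} bounds $v_r$ by $\norm{f_r}+|c|+|g|$. The right-hand sides in the localized equations are controlled by Theorem~\ref{thm:global}. Together these yield $\norm{U_r}_{H^{5/2+\varepsilon}}+\norm{\psi_r}_{H^{1/2+\varepsilon}}+\sum|c_i(p)|\le C(\norm{f}_{H^{1/2+\varepsilon}}+\norm{g}_{\partial S})$. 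One care point: the cutoff $\vph(d)$ in the theorem should match the one used in $S^q$. A mismatch only changes the remainder by smooth compactly supported terms times $|c_i(p)|$, which is harmless. Combining this bound with quasi-optimality proves the estimate.
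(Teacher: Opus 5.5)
Your proposal is correct and follows essentially the same route as the paper: the G\aa rding inequality plus Galerkin orthogonality, with the compact term bounded by $Ch^\sigma\norm{\bm u-\bm u_h}_{\mathbb H}^2$ via Lemma~\ref{lem:duality} (whose proof indeed uses only Galerkin orthogonality) and absorbed for small $h$, existence and uniqueness from the homogeneous problem and the squareness of the discrete system, and the rate from Lemma~\ref{thm:interp} with $s=2$, $t=\frac12$. Your explicit tracking of constants to bound $\norm{U_r}_{H^2(S)}+\norm{\psi_r}_{H^{\frac12}_{\overline S}(\Gamma)}$ by the data fills in a step the paper only asserts; the one adjustment is that the compact-term estimate you quote from the proof of Theorem~\ref{thm:solvability} must be read with the smoothing part $V_{S,1}$ from Lemma~\ref{lem:vs-decomp} in place of $V_S$, since the bound $\norm{V_S\psi}_{H^{\frac12+\varepsilon}}\le C\norm{\psi}_{H^{-\frac12-\varepsilon}}$ as printed there cannot hold for an operator of order $-1$; the paper itself uses $V_{S,1}$ when it estimates $\dual{V_{S,1}(\psi-\psi_h)}{\psi-\psi_h}_{L^2(S)}$.
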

\begin{proof}
    We begin by deriving an estimate for any solution to~\eqref{eqn:Galerkin} that may exist. Note that we always have
    \begin{equation}
        a(\bm u - \bm u_h, \bm v_h) = 0, \quad \forall \bm v_h\in \mathbb H_h.
    \end{equation}
    For any $\sigma \in (0,\frac{1}{2})$, \rev{by} Lemma~\ref{lem:vs-decomp} and~\ref{lem:duality}, we have \rev{the following estimate},
    \begin{equation}
    \begin{aligned}
        &|\dual{\mc B(\bm u-\bm u_h)}{\bm u-\bm u_h}_{L^2(S)}| \\
        &\le |\dual{V_{S,1}(\psi-\psi_h)}{(\psi-\psi_h)}_{L^2(S)}| + |\inprod{U-U_h}{U-U_h}_{L^2(S)}| \\
        & \le \norm{V_{S,1}(\psi-\psi_h)}_{H^{\frac{1}{2}+\sigma}(S)} \norm{\psi-\psi_h}_{H^{-\frac{1}{2}-\sigma}_{\overline{S}}(\Gamma)}  + \norm{U-U_h}_{L^2(S)}^2 \\
        & \le C \paren{ \norm{\psi-\psi_h}_{H^{-\frac{1}{2}}_{\overline{S}}(\Gamma)}\norm{\psi-\psi_h}_{H^{-\frac{1}{2}-\sigma}_{\overline{S}}(\Gamma)}+ \norm{U-U_h}_{L^2(S)}^2 } \\
        &\le C h^{\sigma} \norm{\bm u-\bm u_h}_{\mathbb H}^2.
    \end{aligned}
    \end{equation}
    For any $\bm v_h\in \mathbb H_h$, by the continuity of $a(\cdot,\cdot)$ and the G\aa rding inequality~\eqref{eqn:garding},
    \begin{equation}
    \begin{aligned}
        & C\norm{\bm u - \bm u_h}_{\mathbb H}^2 \le a(\bm u-\bm u_h, \bm u-\bm u_h) + \dual{\mc B(\bm u-\bm u_h)}{\bm u-\bm u_h}_{L^2(S)} \\
        &= a(\bm u-\bm u_h, \bm u-\bm v_h) + \dual{\mc B(\bm u-\bm u_h)}{\bm u-\bm u_h}_{L^2(S)} \\
        &\le C \norm{\bm u-\bm u_h}_{\mathbb H} \norm{\bm u-\bm v_h}_{\mathbb H} + C h^{\sigma} \norm{\bm u-\bm u_h}_{\mathbb H}^2.
    \end{aligned}
    \end{equation}
    Since $\sigma > 0$, there exists $h_0>0$ such that for every $h \in (0,h_0)$, we have
    \begin{equation}
        \norm{\bm u-\bm u_h}_{\mathbb H} \le C \norm{\bm u-\bm v_h}_{\mathbb H}, \quad \forall \bm v_h \in \mathbb H_h.
    \end{equation}
    Then, applying Lemma~\ref{thm:interp}, we have
    \begin{equation}\label{eqn:error-bound}
    \begin{aligned}
        &\norm{\bm u-\bm u_h}_{\mathbb H} \le \inf_{\bm v_h\in\mathbb H_h} \norm{\bm u-\bm v_h}_{\mathbb H} \le C h \paren{\norm{U_r}_{H^{2}(S)} + \norm{\psi_r}_{H^{\frac{1}{2}}_{\overline{S}}(\Gamma)}  } \\
        &\le C h \paren{\norm{f}_{H^{\frac{1}{2}+\varepsilon}_{\overline{S}}(\Gamma)} + \norm{g}_{\partial S}}.
    \end{aligned}
    \end{equation}

We now discuss the existence and uniqueness of the solution to~\eqref{eqn:Galerkin}. Since $\mathbb H_h\subset \mathbb H$ is finite-dimensional, the system~\eqref{eqn:Galerkin} has the same number of equations and unknowns. Therefore, it suffices to consider the homogeneous problem. For $f\equiv 0$ and $g\equiv 0$, we have $\bm u \equiv 0$ by Theorem~\ref{thm:solvability}. For sufficiently small $h$, \eqref{eqn:error-bound} also implies $\bm u_h\equiv 0$. This shows that the finite element approximation~\eqref{eqn:Galerkin} has a unique solution $\bm u_h=(U_h, \psi_h)$.
\end{proof}

\subsection{Numerical examples}
We present numerical examples to illustrate the effectiveness of the enriched finite element method with singular elements. We use the standard piecewise linear finite element method, referred to as the ``standard FEM,'' as a baseline for comparison with the proposed method.
Since the exact solution is generally unavailable, we estimate the numerical error in the energy norm through successive grid refinement. Let $\bm u_h$ and $\bm u_{2h}$ denote the discrete solutions on meshes of size $h$ and $2h$, respectively. For numerical convenience, we compute $|\bm u_h - \bm u_{2h}|_a$, where the energy norm is defined by $|\cdot|_a \coloneqq \sqrt{a(\cdot,\cdot)}$, which is more straightforward to implement than the $H^{-\frac{1}{2}}$ norm.

\noindent\textbf{Example 1.}  
Let the open boundary be a straight line segment given by $ S = \{(x,0) : x \in (-1,1)\} $.  
We set the right-hand side to the constant function $ f(x,y) = 1 $ and impose the Neumann condition $ \bm{\nu} \cdot \nabla_\Gamma U = -1 $ at the two endpoints.  
Table~\ref{tab:fem_errors} reports the errors $ \|\bm{u}_h - \bm{u}_{2h}\|_a $ and the convergence orders for both the standard FEM and the enriched FEM. We observe that the standard FEM achieves only half-order convergence, while the enriched FEM achieves first-order convergence.
\begin{table}[htbp]
\centering
\caption{Example 1: energy‐norm error and convergence order for standard and enriched FEM}
\label{tab:fem_errors}
\begin{tabular}{r cc cc}
\hline
  $N$ & \multicolumn{2}{c}{Standard FEM} & \multicolumn{2}{c}{Enriched FEM} \\
      & $\norm{ \bm u_h - \bm u_{2h} }_a$ & Order       & $\norm{ \bm u_h - \bm u_{2h} }_a$ & Order       \\
\hline
   64 & 1.17E-01 & --- & 2.71E-02 & --- \\
  128 & 8.12E-02 & 0.52 & 1.45E-02 & 0.90 \\
  256 & 5.68E-02 & 0.52 & 7.65E-03 & 0.92 \\
  512 & 3.99E-02 & 0.51 & 4.02E-03 & 0.93 \\
 1024 & 2.81E-02 & 0.51 & 2.10E-03 & 0.94 \\
\hline
\end{tabular}
\end{table}
Figure~\ref{fig:c1-sol} shows the numerical solutions $ U $ and $ \psi $ plotted against the arc-length parameter $ s \in (-1,1) $ on a grid with $ N = 64 $, obtained using both methods. Since the singular element is unbounded, we plot the enriched FEM solution only over the interval $ s \in [-1 + 10^{-3},\, 1 - 10^{-3}] $. From the figures, we observe that the numerical solutions essentially agree, except that the standard FEM exhibits spurious oscillations near the two endpoints, whereas the enriched FEM successfully captures the singular behavior of the solution without such oscillations.  
The solution $ u $ is also plotted in Figure~\ref{fig:uplot-a}.
\begin{figure}[htbp]
    \centering
    \includegraphics[width=0.4\linewidth]{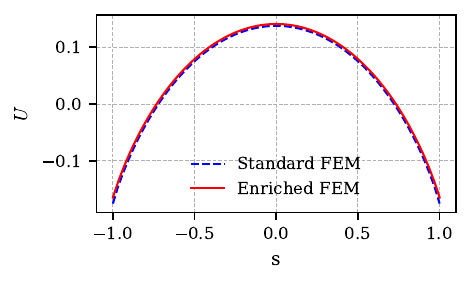}
    \includegraphics[width=0.4\linewidth]{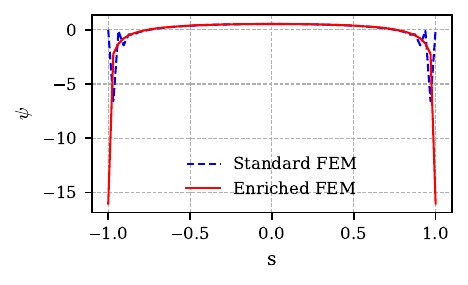}
    \caption{Example 1: numerical solutions of $U$ (left) and $\psi$ (right) computed with $N=64$.}
    \label{fig:c1-sol}
\end{figure}

\noindent\textbf{Example 2.}  
Let the open boundary be a semicircle defined as $ S = \{(\cos s, \sin s) : s \in (0, \pi)\} $.  
We set the right-hand side as a smooth function $ f(x,y) = \sin(\pi x) $ and impose the Neumann condition $ \bm{\nu} \cdot \nabla_\Gamma U = -\frac{1}{2} \int_S f\,ds $ at the two endpoints to satisfy the far-field condition for $ u $.
Table~\ref{tab:fem_errors2} shows the corresponding energy-norm errors and convergence rates for both the standard FEM and the enriched FEM. Again, we observe that the standard FEM achieves only half-order convergence, while the enriched FEM attains first-order convergence.
\begin{table}[htbp]
\centering
\caption{Example 2: energy‐norm error and convergence order for standard and enriched FEM}
\label{tab:fem_errors2}
\begin{tabular}{r cc cc}
\hline
  $N$ & \multicolumn{2}{c}{Standard FEM} & \multicolumn{2}{c}{Enriched FEM} \\
      & $\norm{ \bm u_h - \bm u_{2h} }_a$ & Order       & $\norm{ \bm u_h - \bm u_{2h} }_a$ & Order       \\
\hline
   64 & 2.53E-02 & ---  & 1.44E-02 & --- \\
  128 & 1.60E-02 & 0.66 & 7.27E-03 & 0.99 \\
  256 & 1.06E-02 & 0.60 & 3.67E-03 & 0.99 \\
  512 & 7.20E-03 & 0.56 & 1.85E-03 & 0.99 \\
 1024 & 4.98E-03 & 0.53 & 9.36E-04 & 0.99 \\
\hline
\end{tabular}
\end{table}
Figure~\ref{fig:c2-sol} shows the numerical solutions $ U $ and $ \psi $ plotted against the arc-length parameter $ s \in (-\pi/2, \pi/2) $ on a grid with $ N = 64 $, computed using both methods. We plot the enriched FEM solution only for $ s \in [-\pi/2 + 10^{-3},\, \pi/2 - 10^{-3}] $. As before, we observe that the standard FEM produces spurious oscillations near the endpoints, while the enriched FEM accurately captures the singular behavior without oscillations.  
\begin{figure}[htbp]
    \centering
    \includegraphics[width=0.4\linewidth]{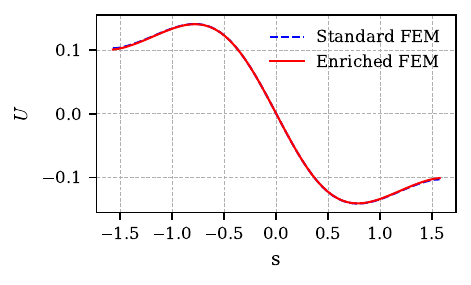}
    \includegraphics[width=0.4\linewidth]{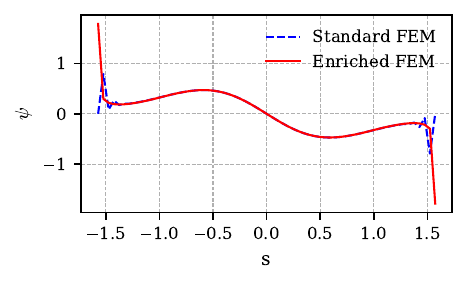}
    \caption{Example 2: numerical solutions of $U$ (left) and $\psi$ (right) computed with $N=64$. }
    \label{fig:c2-sol}
\end{figure}
The solution $ u $ outside the open boundary is shown in Figure~\ref{fig:uplot-b}.
\begin{figure}[htbp]
    \centering
    \begin{subfigure}[b]{0.4\linewidth}
        \centering
        \includegraphics[width=\linewidth]{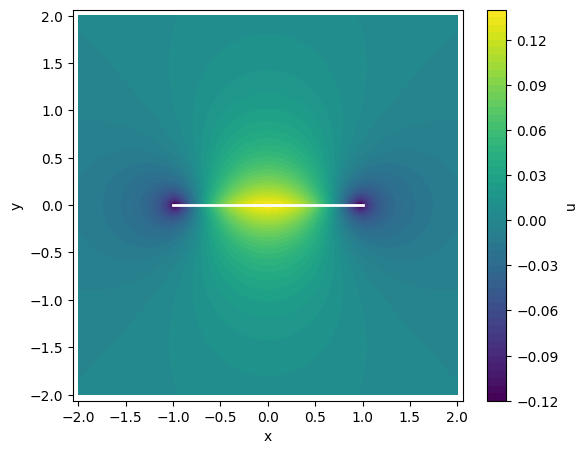}
        \caption{Straight line segment}
        \label{fig:uplot-a}
    \end{subfigure}
    \quad
    \begin{subfigure}[b]{0.4\linewidth}
        \centering
        \includegraphics[width=\linewidth]{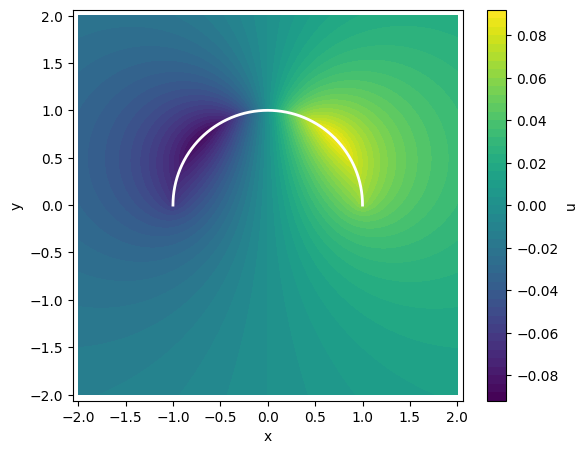}
        \caption{Semicircular arc}
        \label{fig:uplot-b}
    \end{subfigure}
    \caption{Numerical solutions of $u$ for two different configurations.}
    \label{fig:uplot}
\end{figure}

\section{\texorpdfstring{\rev{Conclusion}}{Conclusion}}\label{sec:conclusion}
\rev{We studied a bulk--surface coupled elliptic system posed on an open boundary. By representing the bulk field through a single-layer potential, the original problem in \(\mathbb R^2\setminus\overline S\) was reduced to an integro-differential system on the open arc. This formulation led to existence and uniqueness of weak solutions, regularity estimates, and the leading endpoint singularities of both the surface unknown \(U\) and the single-layer density \(\psi\).}

\rev{The singular expansion also guided the finite element discretization. Instead of refining the mesh near the endpoints, we enriched a quasi-uniform finite element space with the singular functions predicted by the analysis. The resulting method incorporates the endpoint behavior directly into the approximation space: the error analysis and numerical experiments show first-order convergence for the enriched method, while the standard piecewise linear method is limited by the endpoint singularity.}

\appendix

\section{Proofs of some lemmas}

\subsection{Proof of Lemma~\ref{lem:hso-norm}}\label{pf-hso-norm}
The inclusion $H^s_{\overline{\Omega}}(\R^n)\subset H^s_{\overline{\Omega}}(O)$ holds since
\begin{equation}
    \norm{f}_{H^s(O)} = \inf_{g\in H^s(\R^n),\at{g}{O} = f}\norm{g}_{H^s(\R^n)} \le \norm{f}_{H^s(\R^n)}, 
\end{equation}
for every $f\in C_c^\infty(\Omega)$. 
We choose $\vph\in C_c^\infty(O)$ such that $\vph = 1$ in $\Omega$ and $\vph = 0$ in $\R^d\setminus\overline{O}$, and there exists a constant $C$ such that for a multi-index $\alpha$,
\begin{equation}
    |\partial^{\alpha} \vph| \le C,
\end{equation}
where the constant $C$ only depends on $\Omega,O,\alpha$.
For an integer $k\ge 0$ and $f\in C_c^\infty(\Omega)$, we have
\begin{equation}
\begin{aligned}
    &\norm{f}_{H^k(\R^n)}^2 = \norm{\vph f}_{H^k(\R^n)}^2 = \sum_{|\alpha| \le k} \norm{\partial^\alpha(\vph f)}_{L^2(\R^n)}^2 \\
    & \le C\sum_{|\alpha| \le k}\sum_{|\beta|+|\gamma| = |\alpha|} \norm{\partial^\beta\vph \partial^\gamma f}_{L^2(O)}^2 \le C\sum_{|\alpha| \le k}\sum_{|\beta|+|\gamma| = |\alpha|} \norm{\partial^\beta\vph}_{L^\infty(O)}^2  \norm{\partial^\gamma f}_{L^2(O)}^2 \\  
    & \le C\sum_{|\alpha| \le k} \norm{\partial^\alpha f}_{L^2(O)}^2 = C \norm{f}_{H^k(O)}^2,
\end{aligned}
\end{equation}
where the constant $C$ only depends on $\Omega, O, k$.
By the density argument, we have $ H^s_{\overline{\Omega}}(O) = H^s_{\overline{\Omega}}(\R^n)$.

\subsection{Proof of Lemma~\ref{lem:norm-equv-1}}\label{sec:pf-lem-1}
Since $\bm{X}$ is a regular parameterization, there exists $\gamma > 0$ such that for all $t\in\mathbb T$,
\begin{equation}
    \gamma\le \abs{\bm{X}'(t)} \le \norm{\bm{X}}_{C^1(\T)}.
\end{equation}
This implies $|\bm{X}'(t)|^{-1}\in C^{k-1,\alpha}(\T)$.
For $s = 0$, since
\begin{equation}
\norm{f}_{L^2(\Gamma)}^2 = \int_\Gamma| f( x )|^2\,ds_{ x } = \int_{\mathbb T} |(f\circ\bm{X})(t)|^2 \abs{\bm{X}'(t)}\,dt,
\end{equation} 
we have
\begin{equation}
\gamma \norm{f\circ\bm{X}}_{L^2(\mathbb T)}^2 \le \norm{f}_{L^2(\Gamma)}^2  \le  \norm{\bm{X}}_{C^1} \norm{f\circ\bm{X}}_{L^2(\mathbb T)}^2.
\end{equation}

Now we consider the case when $s=m$ is an integer with $1\le m\le k$. 
For any integer $j\ge 1$ and functions $g$ and $f$, it holds that
\begin{equation}\label{eqn:bell-poly}
(g(t)\partial_t)^{j} f(t) = \sum_{m=1}^{j}
g(t)^{m}
B_{j,m}\!\left(
g^{(1)}(t),g^{(2)}(t),\ldots,g^{(j-m+1)}(t)
\right)
f^{(m)}(t),
\end{equation}
where the $B_{j,m}$ are polynomials.
For any integer $1\le \beta \le m$ and $t\in\mathbb T$, using the formula~\eqref{eqn:bell-poly}, we have,
\begin{equation}
\abs{\paren{(\partial^\beta f)\circ \bm{X} }(t)} = \abs{\paren{\abs{\bm{X}'(t)}^{-1}\partial_t}^\beta \paren{f\circ\bm{X}}(t)}  \le  C \sum_{j = 0}^\beta \abs{\partial_t^j (f\circ\bm{X})(t)}, 
\end{equation}
where the constant $C$ depends on $\bm{X}$ but not on $f$.
Consequently, 
\begin{equation}
\begin{aligned}
    \norm{\partial^\beta f}_{L^2(\Gamma)}^2 \le  C \sum_{j = 0}^\beta  \norm{\partial_t^j(f\circ \bm{X})}_{L^2(\mathbb T)}^2 
    & \le C \norm{f\circ \bm{X}}_{H^m(\mathbb T)}^2.        
\end{aligned}
\end{equation}
This shows 
\begin{equation}
    \norm{f}_{H^m(\Gamma)} \le C \norm{f\circ \bm{X}}_{H^m(\mathbb T)}.
\end{equation}
For the other side of the inequality, we write
\begin{equation}
    \paren{(\partial^\beta f)\circ \bm{X} }(t) = \abs{\bm{X}'(t)}^{-\beta}\partial_t^\beta \paren{f\circ\bm{X}} (t)+ \mc R[\bm{X}](t),
\end{equation}
where $\mc R[\bm{X}]$ involves derivatives of $f\circ\bm{X}$ up to the $(\beta-1)$-th order. 
For every $t\in \T$,
\begin{equation}
    \abs{\partial_t^\beta \paren{f\circ\bm{X}} (t)} \le \gamma^{\beta} \paren{\abs{\paren{(\partial^\beta f)\circ \bm{X} }(t)  |+|\mc R[\bm{X}] (t)} }.
\end{equation}
By induction, we have
\begin{equation}
    \norm{\partial_t^{\beta}\paren{f\circ\bm{X}}}_{L^2(\mathbb T)}  \le  C\paren{ \norm{\partial^\beta f}_{L^2(\Gamma)} + \norm{f}_{H^{m-1}(\Gamma)} }  \le  C \norm{f}_{H^m(\Gamma)}.
\end{equation}
This gives
\begin{equation}
    \norm{f}_{H^m(\Gamma)} \ge C\norm{f\circ \bm{X}}_{H^m(\mathbb T)}.
\end{equation}
By \rev{Sobolev }interpolation and duality\rev{ \cite[Appendix~B]{McLean2000}}, the result holds for all $|s|\le k$.

\subsection{Proof of Lemma~\ref{lem:vg-decomp}}\label{pf-vg-decom}
We start with some calculus results.
For $t,s,s+h\in \T$, we define
\begin{align}
    &\Delta f = f(s) - f(t), \\
    &T_{s,h} f(s,t) = f(s+h, t),\\ 
    &\Delta_{s,h} f(s,t) = f(s+h, t) - f(s,t) .
\end{align}
\begin{lemma}\label{lem:diff-est}
Let $\bm{X}\in C^{1,\delta}(\T)$. It holds that,
\begin{align}
    &|\Delta \bm{X}| \le \norm{\bm{X}}_{C^1} |s-t|, \\
    &|T_{s,h} \Delta \bm{X}| \le \norm{\bm{X}}_{C^1} |s+h-t|, \\
    &|\bm{X}'(t) - \frac{\Delta \bm{X}}{s-t}| \le \norm{\bm{X}}_{C^{1,\delta}} |s-t|^\delta, \\
    &|\bm{X}'(s) - \frac{\Delta \bm{X}}{s-t}| \le \norm{\bm{X}}_{C^{1,\delta}} |s-t|^\delta, \\
    &|T_{s,h}\Delta \bm{X} - \Delta \bm{X}| \le \norm{\bm{X}}_{C^1} |h|.
\end{align}
\end{lemma}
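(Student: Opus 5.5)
The five inequalities of Lemma~\ref{lem:diff-est} are elementary consequences of the fundamental theorem of calculus applied to the vector-valued function $\bm X$, together with the Hölder bound on $\bm X'$. Throughout, we regard $s,t,s+h$ as points of $\R$ (lifting from $\T$), with $|s-t|$ measured along the lift on which the integration segment lies. This is legitimate because $\bm X$ is $2\pi$-periodic, so $\bm X(s)-\bm X(t)=\int_t^s\bm X'(\tau)\,d\tau$ holds along any lift. The norms $\norm{\bm X}_{C^1}$ and $\norm{\bm X}_{C^{1,\delta}}$ dominate $\sup|\bm X'|$ and $[\bm X']_{C^{0,\delta}}$ respectively, by the definition of the Hölder norms in Section~\ref{sec:prep}.

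For the first estimate we write
\[
\Delta\bm X=\int_t^s \bm X'(\tau)\,d\tau ,
\]
so that $|\Delta\bm X|\le \sup|\bm X'|\,|s-t|\le \norm{\bm X}_{C^1}|s-t|$. The second estimate is the same bound with $s$ replaced by $s+h$, since $T_{s,h}\Delta\bm X=\bm X(s+h)-\bm X(t)$. For the fifth, we have $T_{s,h}\Delta\bm X-\Delta\bm X=\bm X(s+h)-\bm X(s)=\int_s^{s+h}\bm X'(\tau)\,d\tau$, so it is again the mean value bound, giving $\norm{\bm X}_{C^1}|h|$.

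For the third and fourth estimates we use the averaging identity
\[
\frac{\Delta\bm X}{s-t}=\int_0^1 \bm X'\bigl(t+\theta(s-t)\bigr)\,d\theta .
\]
Subtracting $\bm X'(t)$ and applying the Hölder seminorm inside the integral gives
\[
\Bigl|\bm X'(t)-\frac{\Delta\bm X}{s-t}\Bigr|\le [\bm X']_{C^{0,\delta}}\int_0^1\theta^\delta|s-t|^\delta\,d\theta\le \norm{\bm X}_{C^{1,\delta}}|s-t|^\delta .
\]
Subtracting $\bm X'(s)$ instead, the same argument applies with $1-\theta$ in place of $\theta$.

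The only point requiring care is the periodic setting. On $\T$, the Hölder seminorm of $\bm X'$ should be understood with respect to the lifted, periodic function. The lift is chosen so that the segment from $t$ to $s$ has length $|s-t|$; if $|s-t|$ is interpreted as the distance on $\T$, we take the shorter arc. All constants then remain as stated, so there is no genuine obstacle: everything reduces to the mean value theorem and the definition of $C^{1,\delta}$.
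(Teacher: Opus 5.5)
Your proof is correct. The paper states this lemma without proof, treating it as elementary, and your fundamental-theorem-of-calculus bounds together with the averaging identity $\Delta\bm X/(s-t)=\int_0^1\bm X'(t+\theta(s-t))\,d\theta$ are exactly the standard verification it implicitly relies on. Your treatment of periodicity through the $2\pi$-periodic lift is also sound: torus H\"older bounds imply the real-line ones, since the torus distance never exceeds the lifted difference. This matches how the paper later applies these bounds on lifted integration windows in Lemma~\ref{lem:L2-op}.
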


\begin{lemma}\label{lem:knl-est}
Let $\bm{X}\in C^{1,\delta}$ for $\delta\in(0,1)$ with $\starnorm{\bm{X}}>0$. We define
\begin{align}
    K_1(s,t) &= \frac{\Delta \bm{X} \cdot \paren{\bm{X}'(t) - \frac{\Delta \bm{X}}{s-t}}}{|\Delta \bm{X}|^2}, \label{eqn:K1-def}\\
    Q_1(s,t) &= \frac{\Delta \bm{X}\cdot\paren{ \bm{X}'(s) - \frac{\Delta \bm{X}}{s-t}}}{|\Delta \bm{X}|^2}. \label{eqn:Q1-def}
\end{align}
Then, it holds that
\begin{align}
    &|K_1(s,t)|, |Q_1(s,t)| \le \frac{\norm{\bm{X}}_{C^{1,\delta}}}{\starnorm{\bm{X}}} |s-t|^{\delta-1}, \\
    &|\Delta_{s,h}K_1(s,t)|, |\Delta_{s,h}Q_1(s,t)|\le C\frac{\norm{\bm{X}}_{C^{1,\delta}}^2}{\starnorm{\bm{X}}^2} \paren{ |s+h-t|^{-1}|h|^\delta +  |h| |s+h-t|^{\delta-2} },
\end{align}
\end{lemma}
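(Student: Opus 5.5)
The two pointwise bounds follow from the first four inequalities of Lemma~\ref{lem:diff-est} together with the arc--chord condition $|\Delta\bm X|\ge\starnorm{\bm X}|s-t|$. The difference bounds come from splitting $\Delta_{s,h}$ of a quotient into a difference of numerators over a shifted denominator, plus a difference of denominators. I treat $K_1$ in detail; $Q_1$ is identical except that $\bm X'(t)$ is replaced by $\bm X'(s)$, and this changes only one term.

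\emph{Pointwise bound.} By Cauchy--Schwarz,
\[
|K_1(s,t)|\le \frac{\bigl|\bm X'(t)-\frac{\Delta\bm X}{s-t}\bigr|}{|\Delta\bm X|}\le \frac{\norm{\bm X}_{C^{1,\delta}}|s-t|^{\delta}}{\starnorm{\bm X}\,|s-t|},
\]
which is the claim. The same argument applies to $Q_1$ using the fourth inequality.

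\emph{Difference bound.} Write $K_1=\bm N\cdot\bm D/|\bm N|^2$ with $\bm N=\Delta\bm X$ and $\bm D=\bm X'(t)-\Delta\bm X/(s-t)$. Then
\[
\Delta_{s,h}K_1=\frac{(T_{s,h}\bm N)\cdot(T_{s,h}\bm D)}{|T_{s,h}\bm N|^2}-\frac{\bm N\cdot\bm D}{|\bm N|^2},
\]
which I split into three telescoping pieces.

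(a) The term $\dfrac{(T_{s,h}\bm N-\bm N)\cdot T_{s,h}\bm D}{|T_{s,h}\bm N|^2}$. The last inequality of Lemma~\ref{lem:diff-est} gives $|T_{s,h}\bm N-\bm N|\le\norm{\bm X}_{C^1}|h|$, and $|T_{s,h}\bm D|\le\norm{\bm X}_{C^{1,\delta}}|s+h-t|^\delta$. The arc--chord condition gives $|T_{s,h}\bm N|\ge\starnorm{\bm X}|s+h-t|$. Together these yield $|h||s+h-t|^{\delta-2}$.

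(b) The term $\dfrac{\bm N\cdot(T_{s,h}\bm D-\bm D)}{|T_{s,h}\bm N|^2}$. This piece needs the most care. For $K_1$, $\bm X'(t)$ cancels in $T_{s,h}\bm D-\bm D$, leaving the difference of the two divided differences $\Delta\bm X/(s-t)$ at $s+h$ and at $s$. Both are averages $\int_0^1\bm X'(t+\theta(\cdot-t))\,d\theta$, so the difference is bounded by $[\bm X']_{C^{0,\delta}}|h|^\delta$. For $Q_1$ there is an additional term $\bm X'(s+h)-\bm X'(s)$, which is also $O(|h|^\delta)$. The numerator factor satisfies $|\bm N|\le\norm{\bm X}_{C^1}|s-t|$. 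Two cases then arise:
\begin{itemize}
\item If $|s-t|\le 2|s+h-t|$, the term is bounded by $|h|^\delta|s+h-t|^{-1}$.
\item Otherwise $|h|\ge|s-t|/2$, so $|s-t|\le 2|h|$ and $|s+h-t|\le 3|h|$. In this regime I bound the term instead by $|h|^{\delta}|s-t|\,|s+h-t|^{-2}\lesssim |h|^{1+\delta}|s+h-t|^{-2}$. Since $|h|^{\delta}\ge c\,|s+h-t|^{\delta}$ is not the right direction, I bypass this and bound the original two terms of $\Delta_{s,h}K_1$ separately using the pointwise estimate. This gives $|s+h-t|^{\delta-1}+|s-t|^{\delta-1}$. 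The first is at most $|h||s+h-t|^{\delta-2}$ because $|s+h-t|\le 3|h|$. For the second, I use $|s-t|\ge 2|s+h-t|$ to get $|s-t|^{\delta-1}\le |s+h-t|^{\delta-1}$, which is again controlled.
\end{itemize}
It is cleanest to treat the whole regime $|s+h-t|\le |h|/2$ (or more generally the regime $|s-t|>2|s+h-t|$) by this crude argument at the outset.

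(c) The denominator-difference term $\bm N\cdot\bm D\,\bigl(|T_{s,h}\bm N|^{-2}-|\bm N|^{-2}\bigr)$. Factor
\[
|\bm N|^2-|T_{s,h}\bm N|^2=(|\bm N|-|T_{s,h}\bm N|)(|\bm N|+|T_{s,h}\bm N|),
\]
where the first factor is $O(|h|)$. In the comparable regime, where $|s-t|\sim|s+h-t|$, this produces $|h|\,|s-t|^{1+\delta}\,|s+h-t|\,|s-t|^{-2}|s+h-t|^{-2}\sim |h||s+h-t|^{\delta-2}$.

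The main obstacle is the case bookkeeping: $|s-t|$ and $|s+h-t|$ are not comparable when $|h|$ is large relative to them. My plan is to reduce first to the regime $|s-t|\le 2|s+h-t|\le 4|s-t|$, in which all three pieces work directly, and to handle the complementary regime by the triangle inequality with the pointwise bound, as described in (b). Constants are collected as powers $\norm{\bm X}_{C^{1,\delta}}^2/\starnorm{\bm X}^2$. The dependence on $\starnorm{\bm X}$ is where the squared denominator lower bound enters, and I absorb the extra powers into $C$.
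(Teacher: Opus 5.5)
Your pieces (a), (b), (c) match the paper's split of the numerator over the common denominator $|\Delta\bm X|^2|T_{s,h}\Delta\bm X|^2$ into $A_1,\dots,A_4$. Your arguments for the comparable regime and for $|s-t|>2|s+h-t|$ are correct. The crude argument in the second case handles a point the paper glosses over: converting $|s-t|^\delta|s+h-t|^{-2}$ into $|s+h-t|^{\delta-2}$.

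\textbf{The gap.} The gap is the other half of the complementary regime, $|s+h-t|>2|s-t|$. There $s$ is much closer to $t$ than $s+h$ is, and $|s+h-t|\approx|h|$. You say this case is handled ``by the triangle inequality with the pointwise bound, as described in (b)''. However, (b) used $|s-t|\ge 2|s+h-t|$. In the present regime the crude bound leaves the term $|K_1(s,t)|\lesssim|s-t|^{\delta-1}$, and this is not controlled by the right-hand side, which is $\approx|h|^{\delta-1}$. Your piece (c) fares no better: there it is only bounded by $|h|\,|s-t|^{\delta-1}|s+h-t|^{-1}\approx|s-t|^{\delta-1}$.

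\textbf{This cannot be repaired by bookkeeping.} In this regime the stated inequality is false. Take the unit circle with a non-uniform $C^{1,\delta}$ parametrization $\bm X(\tau)=(\cos\theta(\tau),\sin\theta(\tau))$, where $\theta(\tau)=\tau+b|\tau|^{1+\delta}$ near $\tau=0$ ($b$ small, cut off away from $0$). This is regular and satisfies the arc--chord condition. For $t=0$ one gets $\bm X'(0)-\Delta\bm X/s=-b\,\mathrm{sgn}(s)|s|^{\delta}\bm X'(0)+O(|s|)$, hence $K_1(s,0)\approx -b|s|^{\delta-1}$. Similarly $Q_1(s,0)\approx b\delta|s|^{\delta-1}$. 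Now fix $h$ and let $s\to0$. Then $|\Delta_{s,h}K_1(s,0)|\to\infty$, while the claimed right-hand side tends to $2C|h|^{\delta-1}$.

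The paper's proof has the same blind spot. Since $A_4=(\Delta\bm X\cdot\bm D)\,T_{s,h}\Delta\bm X\cdot(\Delta\bm X-T_{s,h}\Delta\bm X)$, its asserted bound $|A_4|\le|\Delta\bm X|^2\norm{\bm X}_{C^{1,\delta}}^2|h||s-t|^\delta$ tacitly requires $|T_{s,h}\Delta\bm X|\lesssim|\Delta\bm X|$.

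\textbf{The fix.} What saves the downstream argument is that Lemma~\ref{lem:L2-op} only uses the difference bound on the far regions, where $|s-t|\ge|h|$; the near regions are handled with the pointwise bound. So you should state the estimate under the extra hypothesis $|s-t|\ge|h|$. This forces $|s+h-t|\le 2|s-t|$. Then either $|s-t|\le 2|s+h-t|\le 4|s-t|$, or $|s-t|>2|s+h-t|$. Your two arguments cover exactly these cases, so the proof becomes complete.
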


\begin{proof}    
\begin{equation}
    \begin{aligned}
        &\Delta_{s,h} K_1(s,t) \\
        &= \frac{T_{s,h}\Delta \bm{X} \cdot \paren{\bm{X}'(t) - \frac{T_{s,h}\Delta \bm{X}}{s+h-t}}}{|T_{s,h}\Delta \bm{X}|^2} - \frac{\Delta \bm{X} \cdot \paren{\bm{X}'(t) - \frac{\Delta \bm{X}}{s-t}}}{|\Delta \bm{X}|^2} \\
        &= \frac{T_{s,h}\Delta \bm{X} \cdot \paren{\bm{X}'(t) - \frac{T_{s,h}\Delta \bm{X}}{s+h-t}} |\Delta \bm{X}|^2- \Delta \bm{X} \cdot \paren{\bm{X}'(t) - \frac{\Delta \bm{X}}{s-t}}|T_{s,h}\Delta \bm{X}|^2}{|\Delta \bm{X}|^2|T_{s,h}\Delta \bm{X}|^2} \\
        &= \frac{A_1 + A_2 + A_3 + A_4}{|\Delta \bm{X}|^2|T_{s,h}\Delta \bm{X}|^2},
    \end{aligned}
\end{equation}
where
\begin{align}
    &A_1 =T_{s,h}\Delta \bm{X} \cdot \paren{\bm{X}'(t) - \frac{T_{s,h}\Delta \bm{X}}{s+h-t}} |\Delta \bm{X}|^2 - T_{s,h}\Delta \bm{X} \cdot \paren{\bm{X}'(t) - \frac{\Delta \bm{X}}{s-t}} |\Delta \bm{X}|^2 \\
    &A_2 = T_{s,h}\Delta \bm{X} \cdot \paren{\bm{X}'(t) - \frac{\Delta \bm{X}}{s-t}} |\Delta \bm{X}|^2 - \Delta \bm{X} \cdot \paren{\bm{X}'(t) - \frac{\Delta \bm{X}}{s-t}} |\Delta \bm{X}|^2 \\
    &A_3 = \Delta \bm{X} \cdot \paren{\bm{X}'(t) - \frac{\Delta \bm{X}}{s-t}} |\Delta \bm{X}|^2 - \Delta \bm{X} \cdot \paren{\bm{X}'(t) - \frac{\Delta \bm{X}}{s-t}} (\Delta \bm{X}\cdot T_{s,h}\Delta \bm{X}) \\
    &A_4 = \Delta \bm{X} \cdot \paren{\bm{X}'(t) - \frac{\Delta \bm{X}}{s-t}} (\Delta \bm{X}\cdot T_{s,h}\Delta \bm{X}) - \Delta \bm{X} \cdot \paren{\bm{X}'(t) - \frac{\Delta \bm{X}}{s-t}}|T_{s,h}\Delta \bm{X}|^2.
\end{align}
Applying Lemma~\ref{lem:diff-est}, they satisfy the following bounds,
\begin{align}
    &|A_1|\le |\Delta \bm{X}|^2|T_{s,h}\Delta \bm{X}| \norm{\bm{X}}_{C^{1,\delta}} |h|^\delta, \\
    &|A_2| ,|A_3|, |A_4|\le |\Delta \bm{X}|^2 \norm{\bm{X}}_{C^{1,\delta}}^2|h| |s-t|^\delta.
\end{align}
We also have,
\begin{equation}
    \begin{aligned}
        &\Delta_{s,h} Q_1(s,t) \\
        &= \frac{T_{s,h}\Delta \bm{X} \cdot \paren{T_{s,h}\bm{X}'(s) - \frac{T_{s,h}\Delta \bm{X}}{s+h-t}}}{|T_{s,h}\Delta \bm{X}|^2} - \frac{\Delta \bm{X} \cdot \paren{\bm{X}'(s) - \frac{\Delta \bm{X}}{s-t}}}{|\Delta \bm{X}|^2} \\
        &= \frac{T_{s,h}\Delta \bm{X} \cdot \paren{T_{s,h}\bm{X}'(s) - \frac{T_{s,h}\Delta \bm{X}}{s+h-t}} |\Delta \bm{X}|^2- \Delta \bm{X} \cdot \paren{\bm{X}'(s) - \frac{\Delta \bm{X}}{s-t}}|T_{s,h}\Delta \bm{X}|^2}{|\Delta \bm{X}|^2|T_{s,h}\Delta \bm{X}|^2} \\
        &= \frac{B_1 + B_2 + B_3 + B_4}{|\Delta \bm{X}|^2|T_{s,h}\Delta \bm{X}|^2}
    \end{aligned}
\end{equation}
where
\begin{align}
    B_1 &=T_{s,h}\Delta \bm{X} \cdot \paren{T_{s,h}\bm{X}'(s) - \frac{T_{s,h}\Delta \bm{X}}{s+h-t}} |\Delta \bm{X}|^2 - T_{s,h}\Delta \bm{X} \cdot \paren{\bm{X}'(s) - \frac{\Delta \bm{X}}{s-t}} |\Delta \bm{X}|^2 \\
    B_2 &= T_{s,h}\Delta \bm{X} \cdot \paren{\bm{X}'(s) - \frac{\Delta \bm{X}}{s-t}} |\Delta \bm{X}|^2 - \Delta \bm{X} \cdot \paren{\bm{X}'(s) - \frac{\Delta \bm{X}}{s-t}} |\Delta \bm{X}|^2 \\
    B_3  &= \Delta \bm{X} \cdot \paren{\bm{X}'(s) - \frac{\Delta \bm{X}}{s-t}} |\Delta \bm{X}|^2 - \Delta \bm{X} \cdot \paren{\bm{X}'(s) - \frac{\Delta \bm{X}}{s-t}} (\Delta \bm{X}\cdot T_{s,h}\Delta \bm{X}) \\
    B_4  &= \Delta \bm{X} \cdot \paren{\bm{X}'(s) - \frac{\Delta \bm{X}}{s-t}} (\Delta \bm{X}\cdot T_{s,h}\Delta \bm{X}) - \Delta \bm{X} \cdot \paren{\bm{X}'(s) - \frac{\Delta \bm{X}}{s-t}}|T_{s,h}\Delta \bm{X}|^2.
\end{align}
Similarly, we have
\begin{align}
    &|B_1|\le 2|\Delta \bm{X}|^2|T_{s,h}\Delta \bm{X}| \norm{\bm{X}}_{C^{1,\delta}} |h|^\delta, \\
    &|B_2| ,|B_3|, |B_4|\le |\Delta \bm{X}|^2 \norm{\bm{X}}_{C^{1,\delta}}^2|h| |s-t|^\delta.
\end{align}
Applying Lemma~\ref{lem:diff-est}, we obtain the results.


\end{proof}

\begin{lemma}\label{lem:L2-op}
Let $\bm{X}\in C^{1,\delta}(\T)$ with $\delta\in(0,1)$ and $\starnorm{\bm{X}}>0$. 
Define the operator
\begin{equation}
    \mc L[u] (s) = \int_\T L(s,t) u(t)\,dt,\quad L(s,t) = \log|\bm{X}(s)-\bm{X}(t)| - \log\abs{2\sin\frac{s-t}{2}}.
\end{equation}
Then, the operator $\mc L: H^{-1+s}(\T) \to H^{s+\varepsilon}(\T)$ is bounded for $s \in [0,1], \varepsilon \in (0,\delta)$, with
\begin{equation}
\begin{aligned}
    \norm{\mc L[u]}_{H^{s+\varepsilon}}  \le  C \frac{\norm{\bm{X}}_{C^{1,\delta}}^2}{\starnorm{\bm{X}}^2} \norm{u}_{H^{-1+s}}
\end{aligned}
\end{equation}
\end{lemma}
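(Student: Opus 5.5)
The plan is to write $\mc L = \mc L_0 + \mc L_1$, where $\mc L_0 u = \int_\T L(s,t)\,dt\,\cdot\,(\text{mean-type term})$, and to prove the estimate at the two endpoint values $s=0$ and $s=1$. The intermediate values of $s\in[0,1]$ then follow by interpolation. Since $\varepsilon<\delta$ is fixed, we may pick $\varepsilon<\varepsilon'<\delta$ and prove slightly more at the endpoints. The basic observation is that the kernel $L(s,t)=\log\bigl(|\Delta\bm X|/|2\sin\frac{s-t}{2}|\bigr)$ is bounded. By the arc--chord condition, $|\Delta\bm X|/|2\sin\frac{s-t}{2}|$ is bounded above and below by constants depending on $\norm{\bm X}_{C^1}$ and $\starnorm{\bm X}$. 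Its derivatives are controlled by the kernels of Lemma~\ref{lem:knl-est}. Explicitly,
\[
\partial_t L(s,t) = -\frac{\Delta\bm X\cdot\bm X'(t)}{|\Delta\bm X|^2} + \tfrac12\cot\tfrac{s-t}{2} = -K_1(s,t) - \frac{1}{s-t} + \tfrac12\cot\tfrac{s-t}{2}.
\]
Similarly, $\partial_s L = Q_1(s,t) + \frac{1}{s-t} - \frac12\cot\frac{s-t}{2}$. The combination $\frac1{s-t}-\frac12\cot\frac{s-t}2$ is smooth on $\T\times\T$. So, up to smooth terms, $\partial_tL=-K_1$ and $\partial_sL=Q_1$. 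These are weakly singular, of size $|s-t|^{\delta-1}$, and have the $h$-difference bounds of Lemma~\ref{lem:knl-est}.

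\emph{Case $s=0$: $H^{-1}\to H^{\varepsilon}$.} Write $u=u_0+\partial_t F$ with $u_0$ the mean, $F\in L^2(\T)$ of zero mean, and $|u_0|+\norm{F}_{L^2}\le C\norm{u}_{H^{-1}}$. Integrating by parts gives
\[
\mc L[u](s)=u_0\int_\T L(s,t)\,dt + \int_\T \bigl(K_1(s,t)+\text{smooth}\bigr)F(t)\,dt .
\]
The first term is a constant times a function whose $C^{0,\varepsilon}$ norm is bounded, using the $\partial_sL$ bound. For the second term, set $TF(s)=\int K_1(s,t)F(t)\,dt$. The bound $|K_1|\le C|s-t|^{\delta-1}$ and Schur's test give $\norm{TF}_{L^2}\le C\norm F_{L^2}$. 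For the $H^{\varepsilon}$ seminorm I will use the difference characterization
\[
|g|_{H^\varepsilon}^2\simeq\int\!\!\int \frac{|g(s+h)-g(s)|^2}{|h|^{1+2\varepsilon}}\,dh\,ds .
\]
I split the $t$-integral in $\Delta_{s,h}TF(s)$ into the near region $|s-t|<2|h|$ and the far region $|s-t|\ge 2|h|$. In the near region I bound $K_1$ and $T_{s,h}K_1$ separately by $|s-t|^{\delta-1}$; Schur gives a contribution of size $|h|^{\delta}\norm F_{L^2}$. In the far region I use the difference estimate of Lemma~\ref{lem:knl-est}, where $|s+h-t|\simeq|s-t|$. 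The term $|h||s-t|^{\delta-2}$ integrates to $|h|^{\delta}$. The term $|h|^{\delta}|s-t|^{-1}$ only integrates to $|h|^\delta\log(1/|h|)$. Both are admissible, because $\norm{\Delta_{s,h}TF}_{L^2}\le C|h|^{\delta}\log(1/|h|)\norm F_{L^2}$ and $\varepsilon<\delta$, so the $h$-integral converges. This gives $\norm{\mc L u}_{H^{\varepsilon}}\le C\norm u_{H^{-1}}$.

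\emph{Case $s=1$: $L^2\to H^{1+\varepsilon}$.} I differentiate under the integral: $\partial_s\mc L[u]=\int(Q_1+\text{smooth})u\,dt$. This is justified since $Q_1$ is integrable. The operator has exactly the structure just treated, with $Q_1$ in place of $K_1$ and $u\in L^2$ in place of $F$. Hence $\norm{\partial_s\mc L u}_{H^{\varepsilon}}\le C\norm u_{L^2}$, and trivially $\norm{\mc L u}_{L^2}\le C\norm u_{L^2}$ since $L$ is bounded. Interpolation between the two endpoint cases via the complex method \cite[Appendix~B]{McLean2000} yields all $s\in[0,1]$. Tracking constants through Lemma~\ref{lem:knl-est} gives the dependence on $\norm{\bm X}_{C^{1,\delta}}^2/\starnorm{\bm X}^2$.

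The main obstacle I expect is the far-region estimate, where the $|h|^{\delta}|s+h-t|^{-1}$ term is only borderline integrable and produces the logarithm. This is exactly why the gain is limited to $\varepsilon<\delta$ rather than $\varepsilon=\delta$. Care is needed to apply Schur's test uniformly in $h$ to the truncated kernel $\mathbf 1_{|s-t|\ge2|h|}|s-t|^{-1}$, whose row integrals are $O(\log(1/|h|))$. A secondary technical point is justifying the integrations by parts and differentiation under the integral for the merely $C^{1,\delta}$ curve. I would handle this by density, using smooth $u$ first and passing to the limit with the uniform bounds.
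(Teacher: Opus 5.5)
Your proposal is correct and is essentially the paper's own proof. It uses the same steps: the splitting $u=\overline{u}+\partial_t v$ with integration by parts onto $K=K_1+R$, Schur's test for the $L^2$ bound, and a near/far split of the kernel differences giving an operator bound of order $|h|^{\delta}\log(1/|h|)$ (hence $\varepsilon<\delta$). It then uses the identity $\partial_s\mc L[u]=\mc Q[u]$ for the $L^2\to H^{1+\varepsilon}$ endpoint, followed by interpolation.

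One caveat, which the paper's statement shares, concerns the claimed constant. The mean term $\overline{u}\,\mc L[1]$ is controlled by $\sup|L|$, and this shifts by $\log\lambda$ under $\bm X\mapsto\lambda\bm X$, whereas $\norm{\bm X}_{C^{1,\delta}}/\starnorm{\bm X}$ is dilation invariant. So honest constant tracking gives $C\bigl(\norm{\bm X}_{C^{1,\delta}}+1/\starnorm{\bm X}+\norm{\bm X}_{C^{1,\delta}}^2/\starnorm{\bm X}^2\bigr)$, as in the paper's final displays, not the pure ratio.
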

\begin{proof}

We first consider the following two operators
\begin{align}
     \mc K[u](s) &= \int_\T K(s, t) u(t) \,dt, \\
     \mc Q[u](s) &= \int_\T Q(s, t) v(t) \,dt.
\end{align}
where the kernels are defined as
\begin{align}
    &K(s,t) \coloneqq -\partial_t L(s,t) =  \frac{(\bm{X}(s) - \bm{X}(t))\cdot \bm{X}'(t)}{ (\bm{X}(s)-\bm{X}(t))^2 } - \frac{1}{2} \cot \paren{\frac{s-t}{2}}  ,\\
    &Q(s,t)\coloneqq\partial_s L(s,t) = \frac{(\bm{X}(s) - \bm{X}(t))\cdot \bm{X}'(s)}{ (\bm{X}(s)-\bm{X}(t))^2 } - \frac{1}{2} \cot \paren{\frac{s-t}{2} }.
\end{align}
We perform the splitting for the two kernels,
\begin{align}
    K(s,t) = K_1(s,t)  + R(s,t),\quad  Q(s,t)  = Q_1(s,t) + R(s,t),
\end{align}
where $K_1$ and $Q_1$ are defined in~\eqref{eqn:K1-def} and~\eqref{eqn:Q1-def}, and $R$ is given by
\begin{align}
    R(s,t) = \frac{1}{s-t} - \frac{1}{2}\cot\paren{\frac{s-t}{2}}.
\end{align}
Simple calculation shows that
\begin{equation}
    |R(s,t)| \le C|s-t|, \quad \Delta_{s,h}R(s,t) \le C |h|(|s-t|^2+|s+h-t|^2).
\end{equation}
Using Lemma~\ref{lem:knl-est}, we obtain that,
\begin{align}
    M_1 \coloneqq  \sup_s \int_\T |K(s,t)| \,dt \le  \sup_s \int_{-\pi}^\pi |s-t|^{\delta-1} +|s-t|\,dt = C\frac{\norm{\bm{X}}_{C^{1,\delta}}}{\starnorm{\bm{X}}} , \\
    M_2 \coloneqq  \sup_t \int_\T |K(s,t)| \,ds \le  \sup_t \int_{-\pi}^\pi |s-t|^{\delta-1} +|s-t|\,ds = C\frac{\norm{\bm{X}}_{C^{1,\delta}}}{\starnorm{\bm{X}}} .   
\end{align}
Applying Schur's test leads to
\begin{equation}
\begin{aligned}
    \norm{\mc K[u]}_{L^2}^2 \le \int_\T \paren{\int_\T K(s,t) u(t)\,dt}^2\,ds  \le  M_1 M_2 \norm{u}_{L^2}^2 \le   C\frac{\norm{\bm{X}}_{C^{1,\delta}}^2}{\starnorm{\bm{X}}^2} \norm{u}_{L^2}^2,
\end{aligned}
\end{equation}
Similarly,
\begin{equation}
    \norm{\mc Q[u]}_{L^2}^2 \le \int_\T \paren{\int_\T Q(s,t) u(t)\,dt}^2\,ds  \le  M_1 M_2 \norm{u}_{L^2}^2 \le C\frac{\norm{\bm{X}}_{C^{1,\delta}}^2}{\starnorm{\bm{X}}^2} \norm{u}_{L^2}^2.
\end{equation} 
Furthermore, we have 
\begin{equation}
\begin{aligned}
    &\int_\T |\Delta_{s,h} K(s,t)|\,ds = \int_{t-2h}^{t-2h+2\pi}|\Delta_{s,h} K(s,t)|\,ds\\
    &= \int_{t-2h}^{t+h} |\Delta_{s,h} K(s,t)|\,ds + \int_{t+h}^{t-2h+2\pi}|\Delta_{s,h} K(s,t)|\,ds \\
    &\le C \frac{\norm{\bm{X}}_{C^{1,\delta}}^2}{\starnorm{\bm{X}}^2} \Big ( \int_{t-2h}^{t+h} |s-t|^{\delta-1} + |s+h-t|^{\delta-1}\,ds  \\
    &\,\,  + \int_{t+h}^{t-2h+2\pi}|s+h-t|^{-1}|h|^\delta + |h| |s+h-t|^{\delta-2}\,ds  +|h|  \Big)\\
    &\le C \frac{\norm{\bm{X}}_{C^{1,\delta}}^2}{\starnorm{\bm{X}}^2}  |h|^\delta \log|h|.
\end{aligned}
\end{equation}
and,
\begin{equation}
\begin{aligned}
    &\int_\T |\Delta_{s,h} K(s,t)|\,dt = \int_{s-h}^{s-h+2\pi}|\Delta_{s,h} K(s,t)|\,dt\\
    &= \int_{s-h}^{s+2h}|\Delta_{s,h} K(s,t)|\,dt + \int_{s+2h}^{s-h+2\pi}|\Delta_{s,h} K(s,t)|\,dt\\
    &\le C \frac{\norm{\bm{X}}_{C^{1,\delta}}^2}{\starnorm{\bm{X}}^2}\Big( \int_{s-h}^{s+2h} |s-t|^{\delta-1} + |s+h-t|^{\delta-1}\,ds  \\
    &\,\,  + \int_{s+2h}^{s-h+2\pi}  |s+h-t|^{-1}|h|^\delta +   |h| |s+h-t|^{\delta-2}\,ds   +|h|\Big)\\
    &\le C \frac{\norm{\bm{X}}_{C^{1,\delta}}^2}{\starnorm{\bm{X}}^2}  |h|^\delta \log|h|.
\end{aligned}
\end{equation}
Here, we have used $\starnorm{\bm{X}} \le \norm{\bm{X}}_{C^1}\le \norm{\bm{X}}_{C^{1,\delta}}$.
Hence, it holds that
\begin{align}
    M_{1,h} &\coloneqq \sup_{t}\norm{\Delta_{s,h}K(\cdot, t)}_{L^1} \le C \frac{\norm{\bm{X}}_{C^{1,\delta}}^2}{\starnorm{\bm{X}}^2}  |h|^\delta \log|h|, \\ 
    M_{2,h} &\coloneqq \sup_{s}\norm{\Delta_{s,h}K(s, \cdot)}_{L^1} \le C \frac{\norm{\bm{X}}_{C^{1,\delta}}^2}{\starnorm{\bm{X}}^2}  |h|^\delta \log|h|.
\end{align}
Since $\varepsilon <\delta$, by applying Schur's test again, we obtain
\begin{equation}
\begin{aligned}
    [\mc K[u]]_{H^\varepsilon}^2 &= \int_{\T} \int_{\T} \frac{|\mc K[u](x) - \mc K[u](y)|^2}{|x-y|^{1+2\varepsilon}} \,dxdy  =  \int_{\T} \int_{\T} \frac{|\mc K[u](y+h) - \mc K[u](y)|^2}{|h|^{1+2\varepsilon}} \,dydh \\
    &=  \int_{\T} \frac{ \norm{  \int_\T \Delta_{y,h} K(y,t) u (t) \,dt  }_{L^2_y}^2}{|h|^{1+2\varepsilon}}\,dh  \le \int_\T \frac{M_{1,h} M_{2,h}\norm{u}_{L^2}^2}{|h|^{1 + 2\varepsilon}} \,dh\\
    &\le C \frac{\norm{\bm{X}}_{C^{1,\delta}}^4}{\starnorm{\bm{X}}^4}  \norm{u}_{L^2}^2\int_\T |h|^{-1-2\varepsilon+2\delta}\log|h|\,dh  \le C \frac{\norm{\bm{X}}_{C^{1,\delta}}^4}{\starnorm{\bm{X}}^4}  \norm{u}_{L^2}^2,
\end{aligned}
\end{equation}
Similarly,
\begin{equation}
    [\mc Q[u]]_{H^\varepsilon}^2 \le C \frac{\norm{\bm{X}}_{C^{1,\delta}}^4}{\starnorm{\bm{X}}^4}  \norm{u}_{L^2}^2.
\end{equation}
Therefore, we obtain
\begin{equation}\label{eqn:KQ_Heps_est}
    \norm{\mc K[u]}_{H^\varepsilon} \le C \frac{\norm{\bm{X}}_{C^{1,\delta}}^2}{\starnorm{\bm{X}}^2}  \norm{u}_{L^2}, \quad \norm{\mc Q[u]}_{H^\varepsilon} \le C \frac{\norm{\bm{X}}_{C^{1,\delta}}^2}{\starnorm{\bm{X}}^2}  \norm{u}_{L^2}.
\end{equation}

Now, we consider the map $\mc L:H^{-1}(\R)\to H^\varepsilon(\T)$.
For $u\in H^{-1}(\T) $, there exists $ v\in L^2(\T)  $ such that $u$ can be written as
\begin{equation}
    u = \overline{u} +\partial_s v,
\end{equation}
with $\overline{u} = \frac{1}{2\pi}\int_\T u(t) \,dt$ and $\int_\T v(t)\,dt = 0.$
Since $u\in H^{-1}(\T) $ and $1\in H^1(\T) $, 
\begin{align}
    &|\overline{u}| = \frac{1}{2\pi}|\dual{u}{1}_{L^2}| \le  \norm{u}_{H^{-1}},\\
    &\norm{v}_{L^2}^2 = \sum_{k\neq 0} |\wh v_k|^2 = \sum_{k\neq 0} \abs{\frac{\wh u_k}{ik}}^2 = \sum_{k\neq 0} \frac{|\wh u_k|^2}{|k|^2}  \le  2 \sum_{k\in\Z} \frac{|\wh u_k|^2}{1 + |k|^2} = 2\norm{u}_{H^{-1}}^2.
\end{align}
We can rewrite $\mc L[u]$ as
\begin{equation}
    \mc L[u](s) = \int_\T L(s,t)\paren{\overline{u} +\partial_t v(t)} \,dt = \overline{u}  \int_{\T} L(s,t) \,dt + \int_{\T} K(s,t) v(t)\,dt.
\end{equation}
Since the kernel $L(s,t)$ is bounded pointwise,
\begin{equation}
    |L(s,t)| \le  \abs{\log\frac{|\bm{X}(s)-\bm{X}(t)|}{|s-t|} - \log\frac{|2\sin\paren{(s-t)/2}}{|s-t|} } \le \norm{\bm{X}}_{C^1}+\frac{1}{\starnorm{\bm{X}} } + \log(\pi/2),
\end{equation}
for any $w\in L^1(\T)$, it holds that
\begin{equation}\label{eqn:L-l2est}
    \norm{\mc L[w]}_{L^2} \le C\paren{\norm{\bm{X}}_{C^1}+\frac{1}{\starnorm{\bm{X}} } + \log(\pi/2)} \norm{w}_{L^1}.
\end{equation}
By using the relation $\mc Q[u] = \partial_s \mc L[u]$ and~\eqref{eqn:KQ_Heps_est}, we obtain
\begin{equation}
\begin{aligned}
    &\norm{\mc L[u]}_{H^\varepsilon}\le  |\overline{u}|\norm{\mc L[1]}_{H^\varepsilon} + \norm{\mc K[v]}_{H^\varepsilon} \le \norm{u}_{H^{-1}} \paren{\norm{\mc L[1]}_{L^2} + \norm{\mc Q[1]}_{L^2}}+ \norm{\mc K[v]}_{H^\varepsilon} \\
    &\le C \paren{\norm{\bm{X}}_{C^1}+\frac{1}{\starnorm{\bm{X}} } +
    \frac{ \norm{\bm{X}}_{C^{1,\delta}}}{ \starnorm{\bm{X}}}  + \log(\pi/2) }\norm{u}_{H^{-1}} + C \frac{\norm{\bm{X}}_{C^{1,\delta}}^2}{\starnorm{\bm{X}}^2}  \norm{v}_{L^2} \\
    &\le  C \paren{\norm{\bm{X}}_{C^{1,\delta}}+\frac{1}{\starnorm{\bm{X}} } +
    \frac{\norm{\bm{X}}_{C^{1,\delta}}^2}{\starnorm{\bm{X}}^2}  + \log(\pi/2) }\norm{u}_{H^{-1}}.
\end{aligned}
\end{equation}

Then, we consider the map $\mc L:L^2(\T)\to H^{1+\varepsilon}(\T)$.
For $u\in L^2(\T) $, by using~\eqref{eqn:L-l2est} and~\eqref{eqn:KQ_Heps_est},
\begin{align}
    \norm{\mc L[u]}_{L^2} &\le C\paren{\norm{\bm{X}}_{C^1}+\frac{1}{\starnorm{\bm{X}} } + \log(\pi/2)} \norm{u}_{L^2}, \\
    \norm{\partial_s\mc L[u]}_{H^\varepsilon} &= \norm{\mc Q[u]}_{H^\varepsilon}\le C \frac{\norm{\bm{X}}_{C^{1,\delta}}^2}{\starnorm{\bm{X}}^2} \norm{u}_{L^2}.
\end{align}
Therefore, we obtain
\begin{equation}
\begin{aligned}
\norm{\mc L[u]}_{H^{1+\varepsilon}} &\le C \paren{\norm{\bm{X}}_{C^{1,\delta}}+\frac{1}{\starnorm{\bm{X}} } +
    \frac{\norm{\bm{X}}_{C^{1,\delta}}^2}{\starnorm{\bm{X}}^2}  + \log(\pi/2) }\norm{u}_{L^2}.
\end{aligned} 
\end{equation}
For every $s\in [0,1]$, by the interpolation \rev{identities \cite[Appendix~B]{McLean2000}} $H^{-1+s}(\T)=[H^{-1}(\T),L^2(\T)]_{s}$ and $H^{s+\varepsilon}(\T) = [H^\varepsilon(\T), H^{1+\varepsilon}(\T)]_{s}$, we complete the proof.
\end{proof}

Now we are ready to prove Lemma~\ref{lem:vg-decomp}.
By the norm equivalence Lemma~\ref{lem:norm-equv-1}, we set $\psi(s)$ as $(\psi\circ \bm{X})(s)$ and work on the torus $\mathbb T$.
The operator $V_\Gamma$ can be written as
\begin{equation}
\begin{aligned}
    V_\Gamma \psi (s) &= -\frac{1}{2\pi}\int_{\mathbb T} \log| \bm{X}(s) - \bm{X}(t)| \psi(t)\, dt\\
    &= -\frac{1}{2\pi} \int_{\mathbb T} \log \abs{ 2 \sin\frac{s-t}{2} } \psi(t)\, dt -\frac{1}{2\pi} \int_{\mathbb T} \log \abs{ \frac{\bm{X}(s)-\bm{X}(t)}{2 \sin\frac{s-t}{2} }} \psi(t)\, dt \\
    &\coloneqq \mc{L}_1 \psi(s) + \mc{L}_2\psi(s).
\end{aligned}
\end{equation}
The Fourier multiplier for $\mc{L}_1$ is given by
\begin{equation}
    \wh{\mc{L}_1}(k) = -\frac{1}{2\pi}\int_{\mathbb T} \log\abs{2\sin\frac{s}{2}} e^{-iks}\,ds = 
    \begin{cases}
        0, & k = 0,\\
        \frac{1}{2|k|}, & k\in \Z\setminus \{0\}.
    \end{cases}
\end{equation}
Hence, $\mc{L}_1:H^{-\frac{1}{2}}(\mathbb T)\to H^{\frac{1}{2}}(\mathbb T)$ is bounded
\begin{equation}
\begin{aligned}
    \norm{\mc{L}_1 u}_{H^\frac{1}{2}(\mathbb T)}^2 &= \frac{1}{\pi}\sum_{k\neq 0} (1 + |k|^2)^\frac{1}{2} \frac{1}{4 |k|^2} |\wh u_k|^2 \le \frac{1}{4\pi} \sum_{k\neq 0} \frac{1}{(1 + |k|^2)^{\frac{1}{2}}} |\wh u_k|^2\\
    &\le \frac{1}{4\pi} \sum_{k \in \Z} \frac{1}{(1 + |k|^2)^{\frac{1}{2}}} |\wh u_k|^2 = \frac{1}{2} \norm{u}_{H^{-\frac{1}{2}}(\mathbb T)}^2.
\end{aligned}
\end{equation}
Furthermore,
\begin{equation}
\begin{aligned}
    \inprod{u}{\mc{L}_1 u}_{L^2(\mathbb T)} &= \frac{1}{2\pi} \sum_{k\neq 0} \frac{|\wh u_k|^2}{2|k|}  \ge \frac{1}{4\pi} \sum_{k\neq 0}\frac{|\wh u_k|^2}{(1+|k|^2)^{\frac{1}{2}}}\\
    &=\frac{1}{4\pi} \sum_{k\in\Z}\frac{|\wh u_k|^2}{(1+|k|^2)^{\frac{1}{2}}} - \frac{1}{4\pi} |\wh u_0|^2 = \frac{1}{2}\norm{u}_{H^{-\frac{1}{2}}(\mathbb T)}^2 - \inprod{u}{\mc{R} u}_{L^2(\mathbb T)},
\end{aligned}
\end{equation}
where $\mc{R}$ is defined as
\begin{equation}
    \mc{R}u(s) = \frac{1}{2}\wh u_0 = \frac{1}{2}\int_{\mathbb T} u(x)\,dx,
\end{equation}
and it maps any integrable function to a constant function, which belongs to $C^\infty(\mathbb T)$.
Let $V_{\Gamma,0} = \mc{L}_1 + \mc{R}$. The above implies that $V_{\Gamma,0}$ is strictly coercive. 
Let $V_{\Gamma, 1} = \mc{L}_2 - \mc{R}$. 
Since $R$ maps any integrable function to a constant function, which belongs to $H^s(\T)$ for every $s\in \R$, by using Lemma~\ref{lem:L2-op}, we complete the proof of the lemma.

\section*{Acknowledgments}
The authors gratefully acknowledge support from the National Institute for Theory and Mathematics in Biology (NITMB).
The authors thank Vicente Gomez Herrera and Michael Shelley for helpful discussions.
YM was supported by NSF DMR-2309034 (MRSEC) and the Simons Foundation Math+X Chair Fund MPS-MATHX-00234606.

\section*{Conflicts of Interest}
The authors declare no conflicts of interest.

\section*{Data Availability Statement}
Data available on request from the authors.

\bibliographystyle{unsrt}
\bibliography{references}
\end{document}